\definecolor{color1}{rgb}{0.05,.4,0.05}
\definecolor{color2}{rgb}{0.05,0.05,0.4}
\newtheorem{theorem}{Theorem}[section]
\newtheorem{lemma}[theorem]{Lemma}
\newtheorem{prop}[theorem]{Proposition}
\newtheorem{cor}[theorem]{Corollary}
\newtheorem{conj}{Conjecture}
\theoremstyle{remark}
\newtheorem*{remark}{Remark}
\newcommand{\R}{\mathbb R}
\newcommand{\C}{\mathbb C}
\newcommand{\Z}{\mathbb Z}
\newcommand{\Q}{\mathbb Q}
\newcommand{\A}{\mathbb A}
\newcommand{\p}{\mathfrak p}
\newcommand{\g}{\mathfrak g}
\newcommand{\e}{\mathfrak e}
\newcommand{\gk}{\mathfrak k}
\newcommand{\gq}{\mathfrak q}
\newcommand{\ga}{\mathfrak a}
\newcommand{\gn}{\mathfrak n}
\newcommand{\gt}{\mathfrak t}
\newcommand{\gl}{\mathfrak l}
\newcommand{\gu}{\mathfrak u}
\newcommand{\gm}{\mathfrak m}
\newcommand{\cO}{\mathcal O}
\newcommand{\cS}{\mathcal S}
\newcommand{\cH}{\mathcal H}
\newcommand{\GL}{\text{GL} }
\newcommand{\ad}{\text{ad}}
\newcommand{\Ad}{\text{Ad}}
\newcommand{\tr}{\text{tr}}
\newcommand{\be}{\begin{equation}}
\newcommand{\ee}{\end{equation}}
\newcommand{\bes}{\begin{equation*}}
\newcommand{\ees}{\end{equation*}}
\newcommand{\ba}{\begin{eqnarray}}
\newcommand{\ea}{\end{eqnarray}}
\newcommand{\bas}{\begin{eqnarray*}}
\newcommand{\eas}{\end{eqnarray*}}
\title{Endoscopy and cohomology of $U(n,1)$}
\author{Simon Marshall}
\address{Department of Mathematics\\
University of Wisconsin -- Madison\\
480 Lincoln Drive\\
Madison\\
WI 53706, USA}
\email{marshall@math.wisc.edu}
\thanks{Supported by NSF grant DMS-1501230.}
\author{Sug Woo Shin}
\address{Department of Mathematics\\
University of California, Berkeley\\
901 Evans Hall, Berkeley, CA 94720, USA
/ Korea Institute for Advanced Study,\\
 Dongdaemun-gu, Seoul 130-722, Republic of Korea}
\email{sug.woo.shin@berkeley.edu}
\thanks{Supported by NSF grant DMS-1501882 and a Sloan Fellowship.}
\begin{document}

\begin{abstract}

By assuming the endoscopic classification of automorphic representations on inner forms of unitary groups, which is currently work in progress by Kaletha, Minguez, Shin, and White, we bound the growth of cohomology in congruence towers of locally symmetric spaces associated to $U(n,1)$.  In the case of lattices arising from Hermitian forms, we conjecture that the growth exponents we obtain are sharp in all degrees.

\end{abstract}

\date{\today}

\maketitle

\section{Introduction}

This paper studies the limit multiplicity problem for cohomological automorphic forms on arithmetic quotients of $U(N-1,1)$.  Let $F$ be a totally real number field with ring of integers $\cO_F$. Write $\A$ for the ring of adeles over $F$.  Let $E$ be a totally imaginary quadratic extension of $F$.  Let $G$ be a unitary group with respect to $E/F$, that is, an inner form of the quasi-split unitary group $U(N)=U_{E/F}(N)$ with signature $(N-1,1)$ at one real place and compact factors at all other real places.  Let $S$ be a finite set of places to be defined later, and which includes all primes above 2, 3, and $\infty$, and let $\gn \subset \cO_F$ be a nonzero ideal that is divisible only by primes away from $S$ that split in $E/F$.  We let $K(\gn) \subset G(\A_f)$ be the compact congruence subgroup of level $\gn$, and let $\Gamma(\gn) = G(F) \cap K(\gn)$ be the congruence arithmetic lattice in $U(N-1,1)$ of level $\gn$ associated to $G$.  Let $Y(\gn)$ be the manifold $\Gamma(\gn) \backslash U(N-1,1) / U(N-1) \times U(1)$, which is a connected finite volume complex hyperbolic manifold of complex dimension $N-1$. (See \eqref{X(n)} below for the precise definition.) Write $h^d_{(2)}(Y(\gn))$ for the dimension of the $L^2$-cohomology of $Y(\gn)$ in degree $d\ge 0$.

\begin{theorem}\label{simplemaintheorem}

Assume the endoscopic classification for inner forms of $U(N)$ stated in Theorem 1.7.1 of \cite{KMSW}.  If $d < N-1$, we have $h^d_{(2)}(Y(\gn)) \ll_\epsilon \textup{vol}(Y(\gn))^{Nd / (N^2-1) + \epsilon}$.

\end{theorem}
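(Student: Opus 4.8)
The plan is to translate the problem into a statement about the discrete automorphic spectrum, apply the endoscopic classification of \cite{KMSW} to pin down which representations can occur below the middle degree, and then finish with a counting argument over the (very restricted) family of Arthur parameters involved.

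\emph{Step 1: passage to the automorphic spectrum.} Since the Hermitian form defining $G$ may be isotropic over $F$, the space $Y(\gn)$ need not be compact, which is why one works with $L^2$-cohomology. By Zucker's conjecture (Looijenga, Saper--Stern) together with Borel--Casselman, $H^d_{(2)}(Y(\gn))$ is computed by the discrete automorphic spectrum, so that $h^d_{(2)}(Y(\gn)) = \sum_\pi m_{\mathrm{disc}}(\pi)\,\dim H^d(\g,K;\pi_\infty)\,\dim\pi_f^{K(\gn)}$, the sum over $\pi=\pi_\infty\otimes\pi_f$ in the discrete spectrum of $G$. As $G$ is compact at every archimedean place other than the one of signature $(N-1,1)$, a nonzero summand forces $\pi_\infty$ to be trivial at the compact places, so only the component at the distinguished real place is relevant.

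\emph{Step 2: cohomological archimedean representations and their Arthur parameters.} By Vogan--Zuckerman, the representations of $U(N-1,1)$ with nonzero $(\g,K)$-cohomology are the $A_\q(\lambda)$ attached to $\theta$-stable parabolics; for trivial coefficients the Levi has the form $\l\cong\prod_j U(a_j)\times U(b,1)\times\prod_k U(c_k)$, and $H^\bullet(\g,K;A_\q(0))\cong H^{\bullet-R_\q}(\mathbb{CP}^{b})$ with $R_\q=\dim(\u\cap\p)=N-1-b$, so $A_\q(0)$ contributes precisely in degrees $N-1-b,\,N-1-b+2,\,\ldots,\,N-1+b$. Thus a representation contributing in degree $d<N-1$ must have $b\ge N-1-d$, i.e.\ it is highly non-tempered. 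By the endoscopic classification \cite{KMSW}, $\pi$ lies in a global packet $\Pi_\psi$ with $\psi=\boxplus_i\mu_i\boxtimes\nu_{d_i}$, $\mu_i$ conjugate self-dual cuspidal on $\mathrm{GL}_{n_i}(\A_E)$ and $\sum_i n_i d_i=N$; matching the Vogan--Zuckerman modules against the Adams--Johnson members of the archimedean packet and comparing infinitesimal characters forces $n_i=1$ for all $i$, so each $\mu_i$ is a conjugate self-dual algebraic Hecke character of $E$, $\sum_i d_i=N$, and the partition $(d_i)$ of $N$ has a part of size $\ge N-d$. This is the crucial reduction: the ``$\mathrm{GL}$-part'' of any relevant parameter is a short list of Hecke characters carried by a tightly constrained partition of $N$, and $\psi$ has a large Arthur $\mathrm{SL}_2$-block.

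\emph{Step 3: counting.} For $v\mid\gn$ one has $G(F_v)\cong\mathrm{GL}_N(F_v)$ and $\Pi_{\psi_v}$ is the singleton consisting of the Langlands quotient of $\Ind_P^{\mathrm{GL}_N}\!\big(\bigotimes_i\mu_{i,v}\circ\det_{\mathrm{GL}_{d_i}}\big)$, with $P$ the standard parabolic of block sizes $(d_i)$; conductor and newvector estimates for these ladder/Speh-type representations bound $\dim\pi_v^{K(\gn)_v}$ by a small power of the local index, the exponent depending only on $(d_i)$ and on $\mathrm{cond}(\mu_{i,v})$. The contributions at $v\in S$ and at the compact real places are bounded independently of $\gn$. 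Since the number of conjugate self-dual algebraic Hecke characters of $E$ with bounded infinity type and conductor dividing $\gn$ is $\ll_\epsilon\mathrm{Nm}(\gn)^{1+\epsilon}$, there are only $O(1)$ partitions, and the multiplicity formula of \cite{KMSW} controls which $\pi_f\in\Pi_{\psi_f}$ actually occur, one bounds $h^d_{(2)}(Y(\gn))$ by the maximum, over partitions of $N$ having a part $\ge N-d$, of an explicit power of $\mathrm{Nm}(\gn)$. Optimizing this power and rewriting $\mathrm{Nm}(\gn)$ in terms of $\textup{vol}(Y(\gn))$, which is proportional to the index of $K(\gn)$ in the level-one subgroup, yields the claimed bound $\textup{vol}(Y(\gn))^{Nd/(N^2-1)+\epsilon}$ for $d<N-1$.

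\emph{Main obstacle.} The heart of the argument is the dictionary of Step 2: identifying exactly which Adams--Johnson members of an archimedean $A$-packet are cohomological in a prescribed degree $d$, keeping track of the component-group sign characters and the infinitesimal character, and then inserting this data into the (conditional) multiplicity formula of \cite{KMSW} so that the global sum is not overcounted. The second delicate point is the newvector bookkeeping in Step 3 for the non-tempered local representations: determining the optimal partition and matching it to the volume normalization is what produces the precise exponent $Nd/(N^2-1)$, and this is also where the conjectured sharpness is to be tested.
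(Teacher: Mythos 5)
Your Steps 1 and 3 are broadly in the spirit of the paper's argument (Matsushima/Borel--Casselman, localization of the packet, and a count over parameters), but Step 2 contains a false claim that removes the entire difficulty of the theorem. You assert that comparing the Vogan--Zuckerman modules with the archimedean Arthur packet members and matching infinitesimal characters forces every cuspidal constituent $\mu_i$ of $\psi$ to live on $GL(1,\A_E)$, i.e.\ to be a Hecke character. This is not true: the cohomological condition constrains only the archimedean localization $\psi_\infty$, and a conjugate self-dual cuspidal representation $\mu_i$ of $GL(m_i,\A_E)$ with $m_i>1$ whose archimedean $L$-parameter is a suitable sum of $m_i$ characters of $\C^\times$ is perfectly compatible with it. Concretely, the shape that is expected to dominate $h^d_{(2)}$, namely $\psi=\chi\boxtimes\nu(N-d)\boxplus\mu\boxtimes\nu(1)$ with $\mu$ cuspidal on $GL(d,\A_E)$, already violates your reduction for every $d>1$; and the exceptional case $N=4$, $d=2$ of the main theorem comes from $\psi=\mu\boxtimes\nu(2)$ with $\mu$ cuspidal on $GL(2,\A_E)$. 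Restricting to sums of Hecke characters would discard most of the relevant spectrum, and the subsequent count $\ll\mathrm{Nm}(\gn)^{1+\epsilon}$ of Hecke characters cannot stand in for what is actually needed.

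What your proposal therefore omits is the real content of the proof: (i) for each shape $(n_i,m_i)$ one must bound $\dim\pi_f^{K(\gn)}$ summed over the packet in terms of invariant vectors of the $\phi_i$ on the smaller quasi-split groups $U(m_i)$, which at nonsplit and ramified places requires the local character identities of the classification, not just the split-place Langlands quotient description; (ii) one must count the cuspidal $\mu_i$ on $GL(m_i,\A_E)$ of level $\gn$ and bounded infinity type, which is done by descending to $U(m_i)$ (using that simple generic packets are stable, hence occur discretely, and in fact cuspidally by temperedness) and applying Savin's limit multiplicity theorem, yielding $\ll\mathrm{Nm}(\gn)^{m_i^2}$ rather than $\mathrm{Nm}(\gn)^{1+\epsilon}$; and (iii) when $m_1\le 3$ the resulting exponent is too large, and one needs uniform bounds on $\dim\pi^{K_n}$ for supercuspidal representations of $GL_2$ and $GL_3$ to recover the exponent $Nd+1$. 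The combinatorial optimization over shapes (the analogue of your ``optimizing this power'') is then a genuine lemma whose statement involves the $m_i$ in an essential way; with all $m_i=1$ it would not produce the exponent $Nd/(N^2-1)$.
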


The case $d>N-1$ follows by Poincar\'e duality. It is well known that $h^{N-1}_{(2)}(Y(\gn)) \sim \text{vol}(Y(\gn))$.  Previous results of this type in the case of $U(2,1)$ and $U(2,2)$ can be found in work of the first author \cite{Ma1,Ma2}.

Theorem \ref{simplemaintheorem} fits into the general framework of estimating the asymptotic multiplicities of automorphic forms.  We now recall the general formulation of this problem, and some of the previous results on it.  Let $G$ be a semisimple real algebraic group with no compact factors.  If $\Gamma \subset G$ is a lattice and $\pi \in \widehat{G}$, we let $m(\pi, \Gamma)$ be the multiplicity with which $\pi$ appears in $L^2(\Gamma \backslash G)$.  If we now assume that $\Gamma$ is congruence arithmetic and that $\Gamma_n \subset \Gamma$ is a family of principal congruence subgroups, the limit multiplicity problem is to provide estimates for $m(\pi, \Gamma_n)$.

A general principle that has emerged from work on this problem is that, the further $\pi$ is from being discrete series, the better bounds one should be able to prove for $m(\pi, \Gamma_n)$.  If we define $V(n) = \text{vol}(\Gamma_n \backslash G)$, the trivial bound (at least when $\Gamma$ is cocompact) is $m(\pi, \Gamma_n) \ll V(n)$, and it is known from work of de George and Wallach \cite{dGW} (if $\Gamma$ is cocompact) and Savin \cite{Sa} (if it is not) that this is realized if and only if $\pi$ is in the discrete series.  In the cocompact case, it also follows from \cite{dGW} that if $\pi$ is nontempered, then one has a bound of the form $m(\pi, \Gamma_n) \ll V(n)^{1-\delta(\pi)}$ for some $\delta(\pi) > 0$; see the introduction of \cite{SX} for an explanation of this principle, and \cite{X} for an explicit determination of such a $\delta(\pi)$ in some cases.

For the most highly nontempered representation, namely the trivial one, one has $m(\pi, \Gamma_n) = 1$.  Sarnak and Xue \cite{SX} made a conjecture that interpolates between this and $m(\pi,\Gamma_n) \ll V(n)$ in the discrete series case.  Define $p(\pi)$ to be the infimum over $p$ for which the $K$-finite matrix coefficients of $\pi$ lie in $L^p(G)$.  We then have:

\begin{conj}[Sarnak-Xue]
\label{SXconj}

For fixed $\pi$, we have $m(\pi, \Gamma_n) \ll_\epsilon V(n)^{2/p(\pi) + \epsilon}$.

\end{conj}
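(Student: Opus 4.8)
The plan is to establish the conjecture for the arithmetic groups of the present paper --- $G = U(N-1,1)$, $\Gamma_n = \Gamma(\gn)$, $V(n) = \textup{vol}(\Gamma(\gn)\backslash U(N-1,1))$, and $\pi$ a fixed irreducible unitary representation of $U(N-1,1)$ --- granting the endoscopic classification of \cite{KMSW}; the conjecture for a general semisimple $G$ appears to lie beyond current reach, as it would require Arthur's classification in a generality not yet available. First I would pass to the adelic picture: $m(\pi,\Gamma(\gn)) \ll_\pi \sum_\sigma \dim\sigma_f^{K(\gn)}$, the sum over discrete automorphic representations $\sigma$ of $G(\A)$ whose component at the distinguished real place is $\pi$ (the remaining archimedean components, and the multiplicity of $\sigma$, being controlled once $\pi$ is fixed). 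Since $\pi$ is fixed, these $\sigma$ have a single infinitesimal character and their archimedean data lie in finitely many Arthur packets.

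Next I would attach to each such $\sigma$, via \cite{KMSW}, a global $A$-parameter $\psi$ for $U(N)$, equivalently a conjugate self-dual isobaric sum $\psi \leftrightarrow \boxplus_i \mu_i \boxtimes \nu_{n_i}$ with $\mu_i$ unitary cuspidal on $\mathrm{GL}_{m_i}/E$ and $\sum_i m_i n_i = N$. The constraint on $\sigma_\infty$ pins down $\psi_\infty$, hence the multiset of shapes $\{(m_i,n_i)\}$, up to finitely many choices. The structural input I would use is that the exponent $2/p(\pi)$ is a function of $\psi_\infty$ alone: the members of the archimedean $A$-packet are explicit Langlands quotients, their leading exponents --- hence the $L^p$-integrability of their matrix coefficients --- are read off from $\psi_\infty$ and its Arthur $\mathrm{SL}_2$, and so the conjecture reduces, shape by shape, to comparing $2/p(\pi)$ with the exponent produced by estimating $\sum_\sigma\dim\sigma_f^{K(\gn)}$.

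For a fixed shape, I would bound $\sum_\sigma\dim\sigma_f^{K(\gn)}$ by combining three inputs. (a) \emph{Counting}: the number of admissible tuples $(\mu_i)$ --- of bounded archimedean type, unramified outside $S\cup\{\p\mid\gn\}$, and of small enough conductor at $\gn$ that $\sigma_f^{K(\gn)}\neq 0$ --- is $\ll_\epsilon (\mathrm{N}\gn)^{A+\epsilon}$, where $\mathrm{N}\gn$ is the absolute norm of $\gn$ and $A$ comes from newform counting on the groups $\mathrm{GL}_{m_i}/E$. (b) \emph{Local multiplicity}: at the split primes $\p\mid\gn$, $\sigma_\p$ is the explicit Langlands constituent of $\boxplus_i\mu_{i,\p}\boxtimes\nu_{n_i}$ on $\mathrm{GL}_N(F_\p)$, and Jacquet--Piatetski-Shapiro--Shalika conductor theory gives $\dim\sigma_\p^{K(\p^k)}\ll(\mathrm{N}\p)^{kc_\p}$ for an explicit $c_\p$ depending on the shape, whence $\prod_{\p\mid\gn}\dim\sigma_\p^{K(\p^{v_\p(\gn)})}\ll_\epsilon(\mathrm{N}\gn)^{B+\epsilon}$. (c) \emph{Ramanujan}: to make (a) and (b) sharp the Satake parameters of the $\mu_i$ must be tempered, which holds because each $\mu_i$ is conjugate self-dual cuspidal --- for the regular algebraic ones this is known via the cohomology of unitary Shimura varieties, and the remaining low-rank or non-regular ones are handled by descent to smaller unitary groups together with \cite{KMSW}. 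The places in $S$ and any ramified inert places contribute an $O(1)$ factor, so $\sum_\sigma\dim\sigma_f^{K(\gn)}\ll_\epsilon(\mathrm{N}\gn)^{A+B+\epsilon}$.

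Finally I would convert norms to volumes via $V(n)\asymp[\Gamma(1):\Gamma(\gn)]\asymp(\mathrm{N}\gn)^{N^2-1}$ (the exponent being $\dim\mathrm{SU}(N-1,1)$, the determinant part of the congruence group not growing with $\gn$ since $\cO_E^1$ is finite), obtaining $m(\pi,\Gamma(\gn))\ll_\epsilon V(n)^{(A+B)/(N^2-1)+\epsilon}$, and then verify the identity $(A+B)/(N^2-1)=2/p(\pi)$. The hard part will be this last verification: a finite but intricate comparison, over all Arthur shapes, of the counting-and-newvector exponent $A+B$ against the analytic invariant $p(\pi)$. One first checks the two extremes --- the trivial representation, $\psi=\mathbf 1\boxtimes\nu_N$, which has no moduli and trivial level, so $A=B=0$ matching $p(\pi)=\infty$; and the tempered packets, where all $n_i=1$, $A+B=N^2-1$, and one recovers $m\ll V(n)$ in agreement with \cite{dGW} --- and then works through the intermediate shapes, where the essential subtlety is that $\sigma_\p$ and $\sigma_\infty$ are non-generic, so the conductor and leading-exponent computations must use the actual Langlands data rather than generic estimates. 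A secondary obstacle is the inner-form bookkeeping in \cite{KMSW}: the global multiplicity formula imposes a sign condition on the product over all places of local $A$-packet data, which must be shown not to degrade the count --- but since only an upper bound is needed, it suffices that the formula never increases the number of contributing $\sigma$, which is automatic.
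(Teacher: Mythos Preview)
The statement you are trying to prove is stated in the paper as a \emph{conjecture}, not a theorem; the paper offers no proof of it.  What the paper does prove is Theorem~\ref{simplemaintheorem}, which --- as explained just after Conjecture~\ref{SXconj} --- implies the conjecture for the \emph{cohomological} representations $\pi_{a,b}$ of $U(N-1,1)$, and in fact gives a strictly smaller exponent $Nd/(N^2-1)<d/(N-1)=2/p(\pi_{a,b})$.  So your target of proving the conjecture for an \emph{arbitrary} fixed unitary $\pi$ is broader than anything the paper establishes.

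There is a genuine gap in your plan at step~(c).  The temperedness of the cuspidal constituents $\mu_i$ is deduced in the paper (Lemma~\ref{Ramanujan}) from the fact that the infinitesimal character at infinity is that of the trivial representation, which forces each $\mu_i$ (or its $\chi_-$-twist) to be regular C-algebraic, whence Caraiani's theorem applies.  For a general $\pi$ this regularity can fail, and then nothing currently known forces the $\mu_i$ to be tempered; your sentence ``the remaining low-rank or non-regular ones are handled by descent to smaller unitary groups together with \cite{KMSW}'' is not a proof --- descent does not produce Ramanujan for non-regular conjugate self-dual cusp forms.  Without temperedness, your local-multiplicity and counting estimates in (a)--(b) are not sharp, and the final exponent comparison cannot be carried out.

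Even restricted to cohomological $\pi$, your outline diverges from the paper in two ways worth noting.  First, the paper does not count newforms on $\mathrm{GL}_{m_i}/E$; it transfers the problem to the quasi-split groups $U(m_i)$ and invokes Savin's limit multiplicity theorem there (see Section~\ref{summing}).  Second, your plan ends with verifying $(A+B)/(N^2-1)=2/p(\pi)$, but the paper's combinatorics (Lemma~\ref{comb}) actually yield a \emph{strict} inequality in that direction, so attempting to match the Sarnak--Xue exponent exactly would understate what the method delivers.
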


Note that Conjecture \ref{SXconj} is weaker than the trivial bound in both cases of $\pi$ discrete or trivial.  The point is that it is much stronger for general nontempered $\pi$ than what one can prove using the methods of deGeorge--Wallach mentioned above.  Sarnak and Xue established Conjecture \ref{SXconj} for $SL(2,\R)$ or $SL(2,\C)$, and proved an approximation for $SU(2,1)$ that, in our setting, implies that $h^1(Y(\gn)) \ll \text{vol}(Y(\gn))^{7/12 + \epsilon}$ when $N = 3$ and $\Gamma$ is cocompact and arises from a Hermitian form.

We show in Proposition \ref{cohreps} that the representations $\pi$ of $U(N-1,1)$ contributing to $h^d_{(2)}(Y(\gn))$ all have $p(\pi) \ge 2(N-1)/d$.  In the setting of Theorem \ref{simplemaintheorem}, Conjecture \ref{SXconj} therefore predicts that $h^d_{(2)}(Y(\gn)) \ll_\epsilon \textup{vol}(Y(\gn))^{d/(N-1) + \epsilon}$, so that Theorem \ref{simplemaintheorem} in fact represents a strengthening of this conjecture.

We note that there has also been significant progress recently on the problem of showing that the normalized discrete spectral measure of $L^2(\Gamma_n \backslash G)$ tends weakly to the Plancherel measure of $G$.  This work is in some sense orthogonal to ours, and as formulated these results do not provide information on $m(\pi, \Gamma_n)$ beyond showing that $m(\pi, \Gamma_n) / V(n)$ approaches the expected value.

\subsection{Outline of the proof}

It will be more convenient for us to work on the possibly disconnected arithmetic quotients $X(\gn) = G(F) \backslash G(\A) /  K(\gn) K_\infty$.  If $q_v$ denotes the order of the residue field of $F_v$, we prove the following more precise bound.

\begin{theorem}
\label{maintheorem}

Assume the endoscopic classification for inner forms of $U(N)$ stated in Theorem 1.7.1 of \cite{KMSW}.  If $d < N-1$, we have $h^d_{(2)}(X(\gn)) \ll \prod_{v | \gn} (1 - 1/q_v) N\gn^{Nd+1}$, except when $N = 4$ and $d = 2$ when we have $h^d_{(2)}(X(\gn)) \ll \prod_{v | \gn} (1 + 1/q_v) N\gn^{Nd+1}$.

\end{theorem}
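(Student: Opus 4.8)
The plan is to compute $h^d_{(2)}(X(\gn))$ spectrally and then read off the answer from the shape of the Arthur parameters that can occur. The starting point is the Matsushima--Borel description of $L^2$-cohomology: writing $\cA_2$ for the discrete automorphic spectrum of $G(\A)$,
\[
h^d_{(2)}(X(\gn)) \;=\; \sum_{\pi\in\cA_2} m(\pi)\,\dim H^d(\g,K_\infty;\pi_\infty)\,\dim\pi_f^{K(\gn)},
\]
the archimedean factor vanishing unless $\pi_\infty$ has the infinitesimal character of the trivial representation and is one of the finitely many cohomological ($A_{\gq}(0)$-type, in the sense of Vogan--Zuckerman) representations of $U(N-1,1)$ contributing in degree $d$. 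For $d<N-1$ Proposition \ref{cohreps} gives $p(\pi_\infty)\ge 2(N-1)/d$, so these $\pi_\infty$ are non-tempered; the first step is to record, from the Adams--Johnson description of their archimedean Arthur parameters, that each $\psi_\infty$ is ``$\mathrm{SL}_2$-isobaric'': a single nontrivial block (a conjugate-self-dual character times the $\mathrm{SL}_2$-representation $[N-d]$) together with $d$ one-dimensional blocks, so that \emph{no cuspidal datum of rank $>1$ intervenes}. This is exactly what makes $U(n,1)$ accessible, in contrast to higher real rank.

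Next I feed this into the endoscopic classification of \cite{KMSW}: every $\pi\in\cA_2$ lies in a global Arthur packet $\Pi_\psi$ for a parameter $\psi=\boxplus_i\mu_i\boxtimes[d_i]$ of $U(N)$, with $m(\pi)$ equal to the number of such $\psi$ satisfying the canonical sign condition on $\cS_\psi$, in particular $m(\pi)=O(1)$. Matching Jordan types at the archimedean place forces $\psi=(\mu\boxtimes[N-d])\boxplus\nu_1\boxplus\cdots\boxplus\nu_d$ for conjugate-self-dual Hecke characters $\mu,\nu_1,\dots,\nu_d$ of $\A_E^\times$ whose archimedean components lie in a fixed finite set, and $\pi_f^{K(\gn)}\neq0$ forces $\psi$ to be unramified outside $S\cup\{v\mid\gn\}$ with conductor dividing a controlled power of $\gn$. (Configurations with a larger $\mathrm{SL}_2$-block contribute to $h^d_{(2)}$ as well, but with a strictly smaller parameter count, and are treated identically.) Hence
\[
h^d_{(2)}(X(\gn))\;\ll\;\sum_{\psi}\ \prod_{v\mid\gn}\dim\pi_v(\psi)^{K(\gn)_v},
\]
with $\psi$ over the admissible family just described and $\pi_v(\psi)$ the unique member of the local packet at the split place $v\mid\gn$; the bounded number of choices at the places of $S$ and of a character of $\cS_\psi$ are absorbed into $\ll$.

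Two estimates remain. First, the local dimension: at a split $v\mid\gn$ one has $G(F_v)\cong\GL_N(F_v)$, the local Arthur packet is a singleton, and $\pi_v(\psi)$ is the Langlands quotient of $\mathrm{Ind}_P^{\GL_N(F_v)}\!\big(\mathrm{Speh}(\mu_v,N-d)\boxtimes\nu_{1,v}\boxtimes\cdots\boxtimes\nu_{d,v}\big)$ for $P$ the standard parabolic with Levi $\GL_{N-d}\times\GL_1^d$; since $\mathrm{Speh}(\mu_v,N-d)=\mu_v\circ\det$ is one-dimensional over a $v$-adic field, $\pi_v(\psi)$ is a subquotient of a module parabolically induced from a \emph{character} of $P$, and $\dim\pi_v(\psi)^{K(\gn)_v}$ is computed by a Bruhat/Iwahori decomposition as an explicit polynomial in $q_v$ of degree controlled by $\dim(\GL_N/P)$ and the conductors of $\mu_v,\nu_{i,v}$. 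Second, the parameter count: newvector theory bounds those conductors in terms of $v(\gn)$, and the number of admissible tuples of Hecke characters is estimated by counting ray-class characters and divisors of $\gn$ --- elementary, with the usual factor $(1-1/q_v)$ appearing from primitive characters modulo $\varpi_v^{c}$. Multiplying over $v\mid\gn$ and collecting leading terms yields $h^d_{(2)}(X(\gn))\ll\prod_{v\mid\gn}(1-1/q_v)\,N\gn^{Nd+1}$; the parity of the Weyl-group combinatorics attached to $P$ reverses the Euler factor to $(1+1/q_v)$ precisely in the self-dual situation $N-d=d$ with $d$ even, i.e.\ $(N,d)=(4,2)$.

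The main obstacle is expected to be this last bookkeeping: extracting the \emph{exact} power $Nd+1$ and the \emph{precise} Euler factors $\prod_{v\mid\gn}(1\pm1/q_v)$ --- and in particular isolating the single exceptional pair --- demands tracking each polynomial in $q_v$, both from the Speh-induced local representations and from the character count, down to its leading coefficient, with no slack; a cruder version of this already gives Theorem \ref{simplemaintheorem} up to $\epsilon$, but not the clean form of Theorem \ref{maintheorem}. A secondary difficulty is making the reduction rigorous: one needs the archimedean Adams--Johnson theory, compatibly with \cite{KMSW}, to guarantee that no global parameter outside the $\mathrm{SL}_2$-isobaric family can meet the degree-$d$ archimedean constraint, and to confirm that within that family the only unbounded freedom is the finite list of Hecke characters counted above.
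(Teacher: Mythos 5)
There is a genuine gap, and it sits at the foundation of your reduction. You claim that the degree-$d$ archimedean constraint forces every global parameter to have the form $(\mu\boxtimes\nu(N-d))\boxplus\nu_1\boxplus\cdots\boxplus\nu_d$ with $\mu,\nu_1,\dots,\nu_d$ \emph{Hecke characters} of $\A_E^\times$, i.e.\ that ``no cuspidal datum of rank $>1$ intervenes.'' This is false. The archimedean condition pins down the infinitesimal character and (via \cite[Prop.~13.2]{BMM}) forces some $SL_2$-block $\nu(n_1)$ with $n_1\ge N-d$, but the cuspidal constituents $\mu_i\in\widetilde\Phi_{\mathrm{sim}}(m_i)$ may have arbitrary rank $m_i$ subject only to $\sum_i n_im_i=N$; the constraint restricts $\mu_{i,\infty}$ to a finite set, not $\mu_i$ itself. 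In fact the shapes your reduction discards are exactly the ones that matter: the extremal shape realizing the exponent $Nd+1$ is $\cS=(N-d,1),(1,d)$, whose second constituent is a conjugate self-dual \emph{cusp form on $GL(d,\A_E)$} (these are the theta lifts from $d$-dimensional Hermitian spaces expected to dominate $H^d_{(2)}$), while your all-character family $(N-d,1),(1,1)^d$ contributes only $O(N\gn^{Nd})$. Likewise the exceptional case $N=4$, $d=2$ comes from the shape $(2,2)$, i.e.\ $\mu\boxtimes\nu(2)$ with $\mu$ cuspidal on $GL(2,\A_E)$ --- not from any ``parity of Weyl-group combinatorics'' for a character-induced module. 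Your local dimension count also collapses for the same reason: $\mathrm{Speh}(\mu_v,n)$ is one-dimensional only when $\mu_v$ is a character of $GL(1)$.

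Repairing this requires essentially all of the machinery you omitted: (i) a count of cuspidal representations of $GL(m,\A_E)$ with prescribed archimedean component and level, which the paper obtains by descending the simple generic constituents $\phi_i$ to stable packets on the quasi-split $U(m_i)$ and applying Savin's limit multiplicity bound $\ll N\gn^{m_i^2}$; (ii) local bounds at split places $v\mid\gn$ for $\dim\pi_{i,v}^{K_i(\gn)_v}$ that are \emph{uniform in the representation}, which for supercuspidals of $GL(2)$ and $GL(3)$ occupy Sections~\ref{boundGL2} and~\ref{sec:Hoew} of the paper and are needed precisely to salvage the exponent $Nd+1$ when $m_1\le 3$; (iii) the endoscopic character identities at nonsplit places (where the packets are not singletons) to compare invariant dimensions on $G$ with those on the $U(m_i)$; and (iv) the combinatorial optimization over all shapes (Lemma~\ref{comb}), which is where the power $Nd+1$, the Euler factors $\prod_{v\mid\gn}(1\pm1/q_v)$, and the single exceptional pair $(N,d)=(4,2)$ actually come from. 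As written, your argument would at best bound the contribution of the rank-one shapes, which is not where the main term lives.
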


Theorem \ref{simplemaintheorem} follows from this, as $X(\gn)$ contains $\gg_\epsilon N\gn^{1-\epsilon}$ copies of $Y(\gn)$ and we have $\text{vol}(Y(\gn)) = N\gn^{N^2-1 + o(1)}$.  We now give an outline of the proof of Theorem \ref{maintheorem}.  For simplicity, we shall either omit or simplify much of the notation for things like Arthur parameters and packets. Because of this, all notation introduced here is temporary. (Refer to Section \ref{notation} below for unexplained notation.)

Let $\Phi_\text{sim}(n)$ denote the set of conjugate self-dual cusp forms on $GL(n,\A_E)$, and let $\nu(l)$ denote the unique irreducible (complex algebraic) representation of $SL(2,\C)$ of dimension $l$.  Let $U(n)$ be the quasi-split unitary group of degree $n$ with respect to $E/F$.  Let $\Psi_2(n)$ denote the set of square-integrable Arthur parameters for $U(n)$, which are formal sums $\psi = \phi_1 \boxtimes \nu(n_1) \boxplus \cdots \boxplus \phi_k \boxtimes \nu(n_k)$ with $\phi_i \in \Phi_\text{sim}(m_i)$, subject to certain conditions including that $n = \sum_{i\ge 1} n_i m_i$ and that the pairs $\phi_i \boxtimes \nu(n_i)$ have to be distinct.  Any $\phi \in \Phi_\text{sim}(n)$ (resp. $\psi \in \Psi_2(n)$) has localizations $\phi_v$ (resp. $\psi_v$), which are local Langlands parameters (resp. Arthur parameters) for $U(n)$.  To each $\psi \in \Psi_2(N)$ and each place $v$ of $F$, there is associated a local packet $\Pi_{\psi_v}(G)$ of representations of $G(F_v)$, and a global packet $\Pi_\psi(G) = \prod \Pi_{\psi_v}(G)$.   If $\psi \in \Psi_2(N)$ and $K \subset G(\A_f)$ we define $\dim_G(K,\psi) = \sum_{\pi \in \Pi_\psi(G)} \dim( \pi_f^K )$.  Similarly, one may associate to $\psi \in \Psi_2(n)$ a packet $\Pi_\psi(U(n)) = \prod \Pi_{\psi_v}(U(n))$ for $U(n,\A)$, and we define $\dim_{U(n)}(K,\psi)$ for $K \subset U(n,\A_f)$ analogously to $\dim_G(K,\psi)$.

The main result of the endoscopic classification implies that the automorphic spectrum of $G$ is contained in the union of $\Pi_\psi(G)$ for $\psi \in \Psi_2(N)$.  If we combine this classification with Matsushima's formula, we have

\be
\label{Matsueg}
h^d_{(2)}(X(\gn)) \le \sum_{\psi \in \Psi_2(N)} \sum_{\pi \in \Pi_\psi(G) } \dim H^d (\g, K_\infty; \pi_\infty) \dim \pi_f^{K(\gn)}.
\ee
The main part of the proof involves using the structure of the packets $\Pi_\psi(G)$ to bound the right hand side of (\ref{Matsueg}) in terms of global multiplicities on smaller quasi-split unitary groups, which we then bound using a theorem of Savin.   The key fact that allows us to control the power of $N\gn$ we obtain is due to Bergeron, Millson, and Moeglin \cite[Prop 13.2]{BMM}, and essentially states that if there exists $\pi \in \Pi_\psi(G)$ with $H^d (\g, K_\infty; \pi_\infty) \neq 0$, then $\psi$ must contain a representation $\nu(n)$ with $n \ge N-d$.

We define a shape to be a list of pairs $(n_1, m_1), \ldots, (n_k, m_k)$ with $\sum_{i\ge 1} m_i n_i = N$, and may naturally talk about the shape of an Arthur parameter.  If $\cS = (n_1, m_1), \ldots, (n_k, m_k)$ is a shape, we let $\Psi_2(N)_\cS \subset \Psi_2(N)$ be the set of parameters having that shape.  If $\psi \in \Psi_2(N)_\cS$, we let $\phi_i \in \Phi_\text{sim}(m_i)$ be the terms in the decomposition $\psi = \phi_1 \boxtimes \nu(n_1) \boxplus \cdots \boxplus \phi_k \boxtimes \nu(n_k)$.  We also define $P_\cS$ to be the standard parabolic in $GL_N$ of type
\[
(\underbrace{m_1, \ldots, m_1}_{n_1 \text{ times}}, \ldots, \underbrace{m_k, \ldots, m_k}_{n_k \text{ times}}).
\]

We now fix $\cS$, and bound the contribution to (\ref{Matsueg}) from $\Psi_2(N)_\cS$, which we denote $h^d_{(2)}(X(\gn))_\cS$.  As mentioned above, we may assume that $n_1 \ge N-d$.  We may restrict our attention to those $\psi \in \Psi_2(N)_\cS$ for which there is $\pi \in \Pi_\psi(G)$ with $H^d (\g, K_\infty; \pi_\infty) \neq 0$.  This condition restricts $\psi_\infty$, and hence $\phi_{i,\infty}$, to finite sets which we denote $\Psi_\infty$ and $\Phi_{i,\infty}$, so that

\bes
h^d_{(2)}(X(\gn))_\cS \ll \sum_{ \substack{ \psi \in \Psi_2(N)_\cS \\ \psi_\infty \in \Psi_\infty } } \dim_G( K(\gn), \psi ).
\ees
In Section \ref{singleparameter} we prove Proposition \ref{singlefinite}, which  states that if the principal congruence subgroups $K_i(\gn) \subset U(m_i,\A_f)$ are chosen correctly, then one can bound $\dim_G( K(\gn), \psi )$ in terms of $\dim_{U(m_i)}( K_i(\gn), \phi_i )$.  For most choices of $\cS$, this bound has the form

\be
\label{outlinesum4}
\dim_G( K(\gn), \psi ) \ll N\gn^{\dim GL_N / P_\cS + \epsilon} \prod_{i=1}^k \dim_{U(m_i)}( K_i(\gn), \phi_i )^{n_i}.
\ee
We prove this bound by factorizing both sides over places of $F$.  At nonsplit places we apply the trace identities that appear in the definition of the local packets $\Pi_{\psi_v}(G)$.  At split places, $\Pi_{\psi_v}(G)$ is a singleton $\{ \pi_v \}$, and we use the description of $\pi_v$ as the Langlands quotient of a representation induced from $P_\cS$.

We next sum the bound (\ref{outlinesum4}) over $\psi \in \Psi_2(N)_\cS$, or equivalently we sum $\phi_i$ over $\Phi_\text{sim}(m_i)$, which gives

\begin{align}
\notag
h^d_{(2)}(X(\gn))_\cS & \ll  N\gn^{\dim GL_N / P_\cS + \epsilon} \prod_{i=1}^k \sum_{\substack{\phi_i \in \Phi_\text{sim}(m_i) \\ \phi_{i, \infty} \in \Phi_{i, \infty} }} \dim_{U(m_i)}( K_i(\gn), \phi_i )^{n_i} \\
\label{outlinesum2}
& \le N\gn^{\dim GL_N / P_\cS + \epsilon} \prod_{i=1}^k \left( \sum_{\substack{\phi_i \in \Phi_\text{sim}(m_i) \\ \phi_{i, \infty} \in \Phi_{i, \infty} }} \dim_{U(m_i)}( K_i(\gn), \phi_i ) \right)^{n_i}.
\end{align}
If we define $\Theta_{i, \infty}$ to be the union of $\Pi_{\phi_{i, \infty}}( U(m_i) )$ over $\phi_{i, \infty} \in \Phi_{i, \infty}$, then $\Theta_{i, \infty}$ is finite.  Moreover, because the parameters $\phi_i$ are simple generic, the packet $\Pi_{\phi_i}( U(m_i))$ is stable, so all representations in it occur discretely on $U(m_i)$.  This implies that

\be
\label{outlinesum3}
\sum_{\substack{\phi_i \in \Phi_\text{sim}(m_i) \\ \phi_{i, \infty} \in \Phi_{i, \infty} }} \dim_{U(m_i)}( K_i(\gn), \phi_i ) \le \sum_{ \pi_\infty \in \Theta_{i, \infty} } m( \pi_\infty, \gn  ),
\ee
where $m( \pi_\infty, \gn  )$ denotes the multiplicity of $\pi_\infty$ in $L^2_\text{disc}( U(m_i,F) \backslash U(m_i,\A) / K_i(\gn) )$.  In fact it follows from the known cases of the Ramanujan conjecture that $\pi_\infty$ is tempered, so $\pi_\infty$ appears only in the cuspidal spectrum. Then a theorem of Savin \cite{Sa} gives $m( \pi_\infty, \gn ) \ll N\gn^{m_i^2}$ for all $\pi_\infty$.  Combining this with (\ref{outlinesum2}) and (\ref{outlinesum3}) gives a bound
\[
h^d_{(2)}(X(\gn))_\cS \ll N\gn^{\dim GL_N / U_\cS + \epsilon}
\]
where $U_\cS$ is the unipotent radical of $P_\cS$.  Showing that $\dim GL_N / U_\cS \le Nd+1$ completes the proof.

The role played by the cohomological degree in this argument is that $\dim U_\cS$ must be large if $d$ is small, because of the bound $n_1 \ge N-d$.  However, it should be noted that the bound $\dim GL_N / U_\cS \le Nd+1$ does not need to hold if $m_1 \le 3$, and in these cases there are some additional steps one must take to optimize the argument to obtain the exponent $Nd+1$.  We will describe them in the course of the proof in the main body except for the following key input, which may be of independent interest. Namely we give in Theorem \ref{invdim} a uniform bound (which is significantly better than a trivial bound; see the remark below Corollary \ref{corinvdim}) on the dimension of invariant vectors in supercuspidal representations of $GL(3)$ under principal congruence subgroups.\footnote{The same problem for $GL(2)$ also occurs, but this is easier and handled differently based on a construction of supercuspidals via Weil representations. See Section \ref{boundGL2}. As we explained the exceptional case occurs only when $m_1\le 3$, so we need not consider the problem for $GL(n)$ with $n>3$.} By a uniform bound we mean a bound which is independent of the representation (and only depends on the residue field cardinality and the level of congruence subgroup). The asymptotic growth of the invariant dimension is fairly well understood if a representation is fixed but not otherwise.  Analogous uniform bounds, on which our paper sheds some light, should be useful for bounding the growth of cohomology of other locally symmetric spaces.

Our argument in fact shows that $h_{(2)}^d(X(\gn))_\cS \ll_\epsilon N\gn^{Nd +\epsilon}$, except when $\cS = (N-d,1), (1,d)$, or in the exceptional case when $N = 4$, $d = 2$, and $\cS = (2,2)$.  Moreover, when $\cS = (N-d,1), (1,d)$ we expect that the bound $h_{(2)}^d(X(\gn))_\cS \ll_\epsilon N\gn^{Nd + 1 + \epsilon}$ is sharp when $G$ arises from a Hermitian form, so that the majority of $H_{(2)}^d(X(\gn))$ comes from parameters of this shape.  By \cite[Theorem 10.1]{BMM}, these forms are theta lifted from a Hermitian space of dimension $d$, and it may therefore be possible to prove that Theorem \ref{maintheorem} is sharp using the theta lift.

\subsection{Acknowledgements}

We would like to thank Nicolas Bergeron for helpful discussions.  The first author was supported by the NSF under Grant No. 1440140 while in residence at the MSRI in Berkeley, California, during the spring of 2017.

\section{Notation}\label{notation}

Our notation and discussion in this section are based on \cite{M} and \cite{KMSW}. (Similar summaries are given in \cite{Ma2} and \cite{Ma1} with more details in the quasi-split case.)

Let $N$ be a positive integer. Write $GL(N)$ for the general linear group. Let $F$ be a field of characteristic zero. Given a quadratic algebra $E$ over $F$, we define $U(N)=U_{E/F}(N)$ to be the quasi-split unitary group in $N$ variables, defined by an antidiagonal matrix $J_N$ with $(-1)^{i-1}$ in the $(i,N+1-i)$ entry, as in \cite[0.2.2]{KMSW}. The compact special unitary group in two variables is denoted by $SU(2)$. Let $\nu(n)$ denote its $n$-dimensional irreducible representation (unique up to isomorphism).

Assume that $F$ is a local or global field of characteristic zero. Write $W_F$ for the Weil group of $F$. For any connected reductive group $G$ over $F$, its Langlands dual group is denoted by $\widehat G$. Let $^L G=\widehat G \rtimes W_F$ denote the (Weil form of) $L$-group of $G$. Note that $^L GL(N)=GL(N,\C)\times W_F$ and that $^L U_{E/F}(N)$ may be explicitly described, cf. \cite[0.2.2]{KMSW}.

Now assume that $F$ is local.
Define the local Langlands group $L_F:=W_F$ if $F$ is archimedean and $L_F:=W_F\times SU(2)$ otherwise. An $A$-parameter is a continuous homomorphism $\psi : L_F \times SL(2,\C) \rightarrow {}^L G$ commuting with the projection maps onto $W_F$ such that $\psi(L_F)$ has relatively compact image in $\widehat G$ and that $\psi$ restricted to $SL(2,\C)$ is a map of $\C$-algebraic groups into $\widehat G$. Two parameters are considered isomorphic if they are conjugate under $\widehat G$. Write $\Psi(G)$ or $\Psi(G,F)$ for the set of isomorphism classes of $A$-parameters. Define $\Psi^+(G)$ analogously without the condition on relatively compact image. Define $s_\psi:=\psi(1,-1)$ for any $\psi\in \Psi^+(G)$.

An $L$-parameter is $\psi^+\in \Psi^+(G)$ which is trivial on the $SL(2,\C)$-factor (external to $L_F$). The subset of $L$-parameters (up to isomorphism) is denoted by $\Phi(G)$. Any $\psi\in \Psi^+(G)$ gives rise to an $L$-parameter $\phi_\psi$ by pulling back via  the map $L_F \to L_F \times SL(2,\C)$, $w \mapsto\left( w, \left( \begin{smallmatrix} |w|^{1/2} & 0 \\ 0 & |w|^{-1/2} \end{smallmatrix} \right) \right)$.

When $G=GL(N)$, we associate representations $\pi_\psi$ and $\rho_\psi$ of $\GL(N,F)$ to $\psi\in \Psi^+(G)$ as follows:
we may decompose $\psi=\oplus_{i=1}^k \psi_i$ with $\psi_i=\phi_i\boxtimes \nu(n_i)$ such that $\phi_i: L_F \to {}^L GL(m_i)$ is an irreducible $m_i$-dimensional representation of $L_F$ and $\sum_{i=1}^k m_i n_i = N$. The local Langlands correspondence associates an irreducible representation $\pi_{\phi_i}$ of $GL(m_i,F)$ to $\phi_i$. Let $|\det(m)|$ denote the composition of the absolute value on $F^\times$ with the determinant map on $GL(m,F)$. Then consider the multi-set of representations
\be\label{langlandsquot}
\{\pi_{\phi_i}|\det(m_i)|^{\frac{n_i-1}{2}}, \pi_{\phi_i}|\det(m_i)|^{\frac{n_i-3}{2}}, ..., \pi_{\phi_i}|\det(m_i)|^{\frac{1-n_i}{2}}\}_{i=1}^k.
\ee
This defines a representation of $\prod_{i=1}^k GL(m_i)^{n_i}$ viewed as a block diagonal Levi subgroup of $GL(N)$. Let $\rho_{\psi}$ denote the parabolically induced representation. (The choice of parabolic subgroup does not affect our argument; we will choose the upper triangular one.) The Langlands quotient construction singles out an irreducible subquotient $\pi_{\psi}$ of $\rho_{\psi}$ by concatenating the Langlands quotient data for the representations in \eqref{langlandsquot}. More concisely, $\pi_{\psi}$ corresponds to the parameter $\phi_\psi$ obtained from $\psi$ as above.

As in the introduction, from here throughout the paper, we fix a totally real field $F$ and a totally complex quadratic extension $E$ over $F$ with complex conjugation $c$ in $\mathrm{Gal}(E/F)$. The ring of adeles over $F$ (resp. $E$) is denoted by $\A$ (resp. $\A_E$). We often write $G^*$ for $U(N)$ and $G(N)$ for $\mathrm{Res}_{E/F}GL(N)$. The group $G(N)$ is equipped with involution $\theta: g\mapsto J_N {}^t c(g)^{-1} J_N^{-1}$, giving rise to the twisted group $\widetilde G^+(N)=G(N)\rtimes \{1,\theta\}$. Write $\widetilde G(N)$ for the coset $G(N)\rtimes \theta$. Let $v$ be a place of $F$. Given any algebraic group $H$ over $F$, we often write $H_v$ for $H(F_v)$ or $H \otimes_F F_v$ (the context will make it clear which one we mean).

For $n\in \Z_{\ge 1}$, define $\widetilde\Phi_{\mathrm{sim}}(n)$ (a shorthand for $\Phi_{\mathrm{sim}}(\widetilde G(n))$) to be the set of conjugate self-dual cuspidal automorphic representations of $GL(n,\A_E)$. Here a representation $\pi$ is considered conjugate self-dual if $\pi\circ c$ is isomorphic to the contragredient of $\pi$, or equivalently if $\pi\circ \theta$ is isomorphic to $\pi$. Fix two Hecke characters $\chi_\kappa: \A^\times_E/E^\times \to \C^\times$ with $\kappa\in \{\pm 1\}$ as follows: $\chi_+$ is the trivial character while $\chi_-$ is an extension of the quadratic character of $\A^\times/F^\times$ associated to $E/F$ by class field theory. We use $\chi_\kappa$ to define two base-change $L$-morphisms
$$\eta_{\chi_\kappa}: {}^L U(N) \to {}^L G(N),\quad \kappa\in \{\pm 1\},$$
as follows. Choose $w_c\in W_F \backslash W_E$ so that $W_F=W_E \coprod W_E w_c$. Under the identification $\widehat U(N)=GL(N,\C)$ and $\widehat G(N) = GL(N,\C)\times GL(N,\C)$, we have (where scalars stand for scalar $N\times N$-matrices whenever appropriate)
\begin{align}
\notag
\eta_{\chi_\kappa}(g\rtimes 1)  & = (g,J_N {}^t g^{-1} J_N^{-1})\rtimes 1, \\
\notag
\eta_{\chi_\kappa}(1\rtimes w)  & = (\chi_\kappa(w),\chi_\kappa^{-1}(w))\rtimes w, \quad w \in W_E,\\
\notag
\eta_{\chi_\kappa}(1\rtimes w_c)  & = (1,\kappa)\rtimes w_c.
\end{align}

Let us define $\widetilde\Psi_{\mathrm{ell}}(N)$, the set of (formal) elliptic parameters for $\widetilde G(N)$. Such a parameter is represented by a formal sum $\psi=\boxplus_{i=1}^k \psi_i$ with $\psi_i=\mu_i\boxtimes \nu(n_i)$ such that the pairs $(\mu_i,n_i)$ are mutually distinct, where $\mu_i\in \widetilde\Phi_{\mathrm{sim}}(m_i)$, $\sum_{i=1}^k m_i n_i = N$. (Two formal sums are identified under permutation of indices.)

Mok defines the sets $\Psi_2(U(N),\eta_{\chi_\kappa})$ for $\kappa\in \{\pm 1\}$. (In \cite{M}, he writes $\xi_{\chi_\kappa}$ for $ \eta_{\chi_\kappa}$.) They are identified (via the map $(\psi^N,\widetilde\psi)\mapsto \psi^N$ of \cite[Section 2.4]{M}) with disjoint subsets of $\widetilde\Psi_{\mathrm{ell}}(N)$, corresponding to the two ways $U(N)$ can be viewed as a twisted endoscopic group of $\widetilde G(N)$ via $\eta_{\chi_\kappa}$, characterized by a sign condition. We don't need to recall the sign condition here. It suffices to know that each $\psi\in \Psi_2(U(N),\eta_{\chi_\kappa})$ admits localizations to $\Psi^+(U(N)_v)$; see below. We write $\psi^N$ for $\psi$ when $\psi$ is viewed as a member of $\widetilde\Psi_{\mathrm{ell}}(N)$.

A parameter $\psi$ in $\Psi_2(U(N),\eta_{\chi_\kappa})$ is said to be generic if $n_i=1$ for all $1\le i\le k$ and simple if $k=1$. Write $\Phi_{\mathrm{sim}}(U(N),\eta_{\chi_\kappa})$ for the subset of simple generic parameters. Theorem 2.4.2 of \cite{M} shows that $\widetilde\Phi_{\mathrm{sim}}(N)$ is partitioned into $\Phi_{\mathrm{sim}}(U(N),\eta_{\chi_\kappa})$, $\kappa\in \{\pm1\}$.

To a parameter $\psi\in \Psi_2(U(N),\eta_{\chi_\kappa})$ is associated localizations $\psi_v\in \Psi^+(G^*_v)$ such that $ \psi_v$ is carried to $\oplus_{i=1}^k \phi_{\mu_{i,v}}\boxtimes \nu(n_i)$ via the $L$-morphism $\eta_{\chi_\kappa}: {}^L U(N)\to {}^L G(N)$, where $\phi_{\mu_{i,v}}$ is the $L$-parameter for $\mu_{i,v}$ (via local Langlands for $GL(m_i)$). For each place $v$ of $F$ split in $E$, fix a place $w$ of $E$ above $v$. Then we have an isomorphism $G^*_v\simeq GL(N,E_w)$.

 At every finite place $v$ of $F$ where $G^*_v$ is unramified, fix hyperspecial subgroups $\widetilde K_v=GL(\cO_{F_v}\otimes_{\cO_F} \cO_E)$ of $G(N,F_v)$ and  $K^*_v$ of $G^*(F_v)$ (such that they come from global integral models away from finitely many $v$). When $v$ is split as $w$ and $c(w)$ in $E$ we have a decomposition $\widetilde K_v = \widetilde K_w \times \widetilde K_{c(w)}$, and we may identify $K^*_v$ with $\widetilde K_w$ via $G^*_v\simeq GL(N,E_w)$.

  Finally let $G$ be an inner form of $G^*$ over $F$. It can always be promoted to an extended pure inner twist $(\xi,z):G^* \to G$, \cite[0.3.3]{KMSW}. Let $S$ be a set of places of $F$ such that both $G_v$ and $G^*_v$ are unramified for every $v\notin S$. Then fix an isomorphism $G^*_v\simeq G_v$, which is $G(\overline F)$-conjugate to $(\xi,z)$. We have a hyperspecial subgroup $K_v\subset G_v$ by transferring $K^*_v$. So if $v \notin S$ is split in $E$ then $K_v$ and $K^*_v$ are identified with $GL(N,\cO_{E,w})$ under the isomorphisms $G_v\simeq G^*_v \simeq GL(N,E_w)$.

  Let $\psi_v\in \Psi(U(N)_v)$ for a place $v$ of $F$. This gives rise to a distribution $f\mapsto f(\psi_v)$ on the space of smooth compactly supported functions on $U(N)_v$ \cite[Theorem 3.2.1]{M}.

  Given a connected reductive group $H$ over $F_v$, a smooth compactly supported function $f$ on $H(F_v)$, and an admissible representation $\pi$ of $H(F_v)$, we write $\mathrm{tr}(\pi(f))$ or $f(\pi)$ for the trace value. Occasionally we also consider a twisted variant when $\tilde\pi$ is an admissible representation of $G^+(N,F_v)$ and $\tilde f$ is a smooth compactly supported function on $G(N,F_v)\rtimes \theta$. Then $\mathrm{tr}(\tilde\pi(f))$ will denote the (twisted) trace.


\section{Cohomological representations of $U(N-1,1)$}\label{cohomologicalrep}

In this section, we recall some facts about the cohomological representations of the real Lie group $U(N-1,1)$, which will imply that any global Arthur parameter that contributes to $h^d_{(2)}(X(\gn))$ must have a factor $\mu \boxtimes \nu(n)$ with $n \ge N-d$ by applying results of Bergeron, Millson, and Moeglin.  Let $\g_0$ be the real Lie algebra of $U(N-1,1)$, and $K$ a maximal compact subgroup. Write $\g$ for the complexification of $\g_0$. Similarly the complexification of real Lie algebras $\gk_0$, $\p_0$, etc will be denoted by $\gk$, $\p$, etc below. The facts we shall need on the cohomological representations of $U(N-1,1)$ are summarized in the following proposition; recall that $p(\pi)$ is the infimum over $p$ for which the $K$-finite matrix coefficients of $\pi$ lie in $L^p(G)$.

\begin{prop}
\label{cohreps}

Let $a,b$ be a pair of integers with $a, b \ge 0$ and $a+b \le N-1$, and let $d = a+b$.  There is an irreducible unitary representation $\pi_{a,b}$ of $U(N-1,1)$ with the following properties.

\begin{enumerate}

\item
\label{piab1}
We have

\[
H^{p,q}(\g,K; \pi_{a,b}) = \bigg\{ \begin{array}{ll} \C & \text{if } (p,q) = (a,b) + (k,k), \quad 0 \le k \le N-1-a-b, \\
0 & \text{otherwise}. \end{array}
\]

\item
\label{piab2}
Suppose that $d \le N-2$.  If $\varphi: \C^\times \to GL(N,\C)$ is the restriction of the Langlands parameter of $\pi_{a,b}$ to $\C^\times$, then we have
\[
\varphi(z) = (z / \overline{z})^{(b-a)/2} |z|^{N-d-1} \oplus (z / \overline{z})^{(b-a)/2} |z|^{-N+d+1} \oplus \bigoplus_{ \substack{ -N+1 \le j \le N-1 \\ j \equiv N-1 \; (2) \\ j \neq N-1-2a, -N+1+2b } } (z / \overline{z})^{j/2}.
\]

\item
\label{piab3}
We have $p(\pi_{a,b}) = 2(N-1)/d$.

\end{enumerate}
Moreover, the $\pi_{a,b}$ are the only irreducible unitary representations of $U(N-1,1)$ with $H^*(\g,K; \pi) \neq 0$.

\end{prop}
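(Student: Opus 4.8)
The plan is to exploit the well-understood classification of cohomological $(\mathfrak{g},K)$-modules for $U(N-1,1)$ due to Vogan and Zuckerman, and then to match their invariants against the stated properties. Recall that by the Vogan--Zuckerman theorem, every irreducible unitary $(\mathfrak{g},K)$-module $\pi$ with $H^*(\mathfrak{g},K;\pi)\neq 0$ is cohomologically induced, $\pi = A_{\mathfrak{q}}(\lambda)$, from a $\theta$-stable parabolic subalgebra $\mathfrak{q}=\mathfrak{l}\oplus\mathfrak{u}$ of $\mathfrak{g}$; in the cohomological (trivial infinitesimal character) case the relevant $\lambda$ is trivial, so $\pi$ is determined by the $K$-conjugacy class of $\mathfrak{q}$, equivalently by the $L$-dominant data of $\mathfrak{u}$. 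For $U(N-1,1)$ the $\theta$-stable parabolics are easy to enumerate: the real group has a single noncompact simple factor of Hermitian type, so $\mathfrak{q}$ is specified by choosing how the holomorphic and antiholomorphic parts $\mathfrak{p}^{\pm}$ split between $\mathfrak{u}$ and $\mathfrak{l}$. Concretely one finds that the $\theta$-stable parabolics up to $K$-conjugacy are parametrized precisely by pairs $(a,b)$ with $a,b\ge 0$ and $a+b\le N-1$, where $a$ (resp.\ $b$) records the number of holomorphic (resp.\ antiholomorphic) directions placed in $\mathfrak{u}$. This gives the representations $\pi_{a,b}$ and simultaneously proves the final ``only'' clause, once one checks the enumeration is exhaustive.

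First I would set up the root data: fix a compact Cartan $\mathfrak{t}\subset\mathfrak{k}$, write down the roots of $U(N-1,1)$, separate compact from noncompact roots, and describe $\mathfrak{p}=\mathfrak{p}^+\oplus\mathfrak{p}^-$ as $K$-modules (the standard representation of $U(N-1)\times U(1)$ and its dual). Then I would classify $\theta$-stable parabolics $\mathfrak{q}=\mathfrak{l}\oplus\mathfrak{u}$: each is $\mathfrak{q}=\mathfrak{q}(x)$ for $x\in i\mathfrak{t}_0$, and $K$-conjugacy classes correspond to dominant such $x$. Working this out for $\mathfrak{u}(N-1,1)$ shows $\mathfrak{l}$ is of the form $\mathfrak{u}(a)\times\mathfrak{u}(N-1-a-b,1)\times\mathfrak{u}(b)$ embedded block-diagonally (up to reordering), with $\mathfrak{u}\cap\mathfrak{p}$ consisting of $a$ holomorphic and $b$ antiholomorphic directions; set $\pi_{a,b}=A_{\mathfrak{q}}(0)$. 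Part \eqref{piab1} then follows from the Vogan--Zuckerman cohomology formula $H^{p,q}(\mathfrak{g},K;A_{\mathfrak{q}}(0))=\mathrm{Hom}_{L\cap K}\big(\wedge^{p-R_{\mathfrak{u}}^+,\, q-R_{\mathfrak{u}}^-}(\mathfrak{l}\cap\mathfrak{p}),\,\C\big)$ where $R_{\mathfrak{u}}^{\pm}=\dim(\mathfrak{u}\cap\mathfrak{p}^{\pm})=a$ or $b$; since $\mathfrak{l}\cap\mathfrak{p}$ is the $\mathfrak{p}$-part of a $\mathfrak{u}(m,1)$ factor (with $m=N-1-a-b$), its exterior algebra contributes a single copy of $\C$ in each bidegree $(k,k)$, $0\le k\le m$, which is exactly the claimed range. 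For part \eqref{piab3}, $p(\pi_{a,b})$ is governed by the $L^p$-integrability of matrix coefficients, which for $A_{\mathfrak{q}}(0)$ is controlled by the size of the leading exponent, i.e.\ by $2\rho(\mathfrak{u}\cap\mathfrak{p})$ relative to $\rho(\mathfrak{p})$; computing these for the rank-one group $U(N-1,1)$ yields $p(\pi_{a,b})=2(N-1)/d$ with $d=a+b$ — this can also be read off from the archimedean component of the Arthur parameter once part \eqref{piab2} is in hand, since the $|z|^{N-d-1}$ exponent pins down the size of the unitary axis translate.

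For part \eqref{piab2} I would compute the Langlands parameter of $\pi_{a,b}=A_{\mathfrak{q}}(0)$ directly. The infinitesimal character is that of the trivial representation, namely $\rho$, which over $\C^\times$ corresponds to the multiset of half-integers $\{(N-1)/2,(N-3)/2,\dots,-(N-1)/2\}$ of exponents $(z/\bar z)^{j/2}$ with $j\equiv N-1\pmod 2$; this is the ``generic'' part of the displayed formula. The cohomological induction from $\mathfrak{q}$ with $\mathfrak{l}=\mathfrak{u}(a)\times\mathfrak{u}(m,1)\times\mathfrak{u}(b)$ ($m=N-d-1$) means $\pi_{a,b}$ is a Langlands quotient built from a one-dimensional character on the $\mathfrak{u}(a)$ and $\mathfrak{u}(b)$ blocks together with the trivial representation of $U(m,1)$; the latter has, over $\C^\times$, the parameter $(z/\bar z)^{?}(|z|^{m/2}\oplus\cdots)$ whose two ``outermost'' pieces are $|z|^{\pm m/2}\cdot(\text{a }(z/\bar z)\text{-twist})$ — and tracking the $U(1)$-normalization/central twist carefully produces the factor $(z/\bar z)^{(b-a)/2}|z|^{\pm(N-d-1)}$ while removing the two terms $j=N-1-2a$ and $j=-N+1+2b$ from the generic list (these being precisely the exponents "absorbed'' into the $|z|$-deformation). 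So the plan is to track which two of the $N$ unitary characters $(z/\bar z)^{j/2}$ get deformed off the unitary axis.

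The main obstacle I expect is part \eqref{piab2}: getting the exact indices $j=N-1-2a$ and $j=-N+1+2b$ right, and the sign of the $(z/\bar z)^{(b-a)/2}$ twist, requires a careful bookkeeping of the Weyl-group/Langlands-quotient normalization — in particular correctly identifying, inside the $\mathfrak{l}$-factor $\mathfrak{u}(m,1)$, which character of its center the cohomological induction attaches, since an off-by-one or a sign error there propagates into a wrong pair of deleted exponents. A clean way to avoid case-chasing is to first verify \eqref{piab2} for the extreme cases $(a,b)=(d,0)$ and $(0,d)$ (holomorphic/antiholomorphic discrete-series-type inductions, where the answer is classical), then interpolate using the fact that replacing one holomorphic direction in $\mathfrak{u}$ by an antiholomorphic one shifts the twist by $(z/\bar z)$ and swaps the roles of the two deleted exponents. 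The enumeration step (no other cohomological $\pi$) is routine given Vogan--Zuckerman, but one should double-check that distinct $(a,b)$ give non-isomorphic $\pi_{a,b}$, which is immediate from \eqref{piab1} since the lowest bidegree $(a,b)$ in which cohomology is nonzero recovers the pair.
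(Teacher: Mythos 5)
Your overall strategy is the same as the paper's: both arguments run through the Vogan--Zuckerman classification, parametrize the $\theta$-stable parabolics $\gq$ of $\gu(N-1,1)$ by the pair $(a,b)$ recording how $\gu\cap\p$ splits between $\p_+$ and $\p_-$, set $\pi_{a,b}=A_\gq$, and deduce part (\ref{piab1}) from the formula $H^{R_++p,R_-+p}(\g,K;A_\gq)\simeq \Hom_{\gl\cap\gk}(\wedge^{2p}(\gl\cap\p),\C)$ together with the observation that $\gl\simeq\gu(a)\oplus\gu(N-d-1,1)\oplus\gu(b)$, so that the trivial $\gl\cap\gk$-type occurs exactly once in each $\wedge^{2p}(\gl\cap\p)$. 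The exhaustiveness clause is handled identically. Two points of divergence are worth recording. For part (\ref{piab2}) the paper does not carry out the bookkeeping you describe at all: it simply cites Borel--Casselman for the Langlands parameter of $A_\gq$, so your proposed direct computation (extreme cases plus interpolation in $(a,b)$) is extra work the paper avoids; your worry about off-by-one errors in the deleted exponents is exactly why outsourcing this is attractive.

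The one place your sketch under-delivers is the \emph{equality} in part (\ref{piab3}). Saying that $p(\pi_{a,b})$ is ``controlled by the size of the leading exponent'' only yields the upper bound $p(\pi_{a,b})\le 2(N-1)/d$, via the realization of $\pi_{a,b}$ as the Langlands quotient of $I_{\sigma,\nu}$ with $\nu=(N-1-d)\alpha$ and $\rho=(N-1)\alpha$ and Knapp's integrability criterion. The lower bound requires showing that some matrix coefficient of the \emph{quotient} actually attains this rate of decay, i.e.\ that the leading term is not killed when one passes from $I_{\sigma,\nu}$ to its Langlands quotient. The paper does this by realizing $\pi_{a,b}$ as the image of the intertwiner $A(\sigma,\nu)$, invoking Knapp's limit formula $\lim_{a\to\infty} e^{(\rho-\nu)\log a}\langle I_{\sigma,\nu}(a)f,g\rangle=\langle A(\sigma,\nu)f(1),g(1)\rangle_\sigma$, and exhibiting $f$ and $g$ (with $g$ orthogonal to $\ker A(\sigma,\nu)$) for which the right-hand side is nonzero, using translation by $K$ and by $M$ and the irreducibility of $\sigma$. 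Your alternative suggestion of reading $p(\pi_{a,b})$ off the Langlands parameter from part (\ref{piab2}) has the same issue: the parameter tells you the Langlands quotient data, hence the candidate leading exponent, but not by itself that the corresponding asymptotic is realized by a nonzero coefficient. You should add this nonvanishing step to make (\ref{piab3}) an equality rather than an inequality.
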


\subsection{The classification of Vogan and Zuckerman}

We let $G = U(N-1,1)$, and realize $G$ as the subgroup of $GL(N,\C)$ preserving the Hermitian form $|z_1|^2 + \ldots + |z_{N-1}|^2 - |z_N|^2$.  The Lie algebra $\g_0$ of $G$ is
\[
\g_0 = \{ A \in M_N(\C) : {}^t \overline{A} = - I_{N-1,1} A I_{N-1,1} \}
\]
where
\[
I_{N-1,1} = \left( \begin{array}{cc} I_{N-1} & \\ & -1 \end{array} \right).
\]
The algebras $\gk_0$ and $\p_0$ in the Cartan decomposition $\g_0 = \gk_0 \oplus \p_0$ are
\[
\gk_0 = \left\{ \left( \begin{array}{cc} A & 0 \\ 0 & i\theta \end{array} \right) : {}^t \overline{A} = -A, \theta \in \R \right\}, \quad \p_0 = \left\{ \left( \begin{array}{cc} 0 & z \\ {}^t\overline{z} & 0 \end{array} \right) : z \in M_{N-1,1}(\C) \right\}.
\]
Let $\gt_0$ denote the Cartan subalgebra of $\gk_0$ consisting of diagonal matrices.  The adjoint action of $K$ on $\p_0$ preserves the natural complex structure, and so we have a decomposition $\p = \p_+ \oplus \p_-$ of $K$-modules.  We may naturally identify $\g$ with $M_n(\C)$, and under this identification we have
\[
\p_+ = \left\{ \left( \begin{array}{cc} 0 & z \\ 0 & 0 \end{array} \right) : z \in M_{N-1,1}(\C) \right\}, \quad \p_- = \left\{ \left( \begin{array}{cc} 0 & 0 \\ z & 0 \end{array} \right) : z \in M_{1,N-1}(\C) \right\}.
\]
If $\tau_d$ is the representation of $K$ on $\bigwedge^d \p$, it is well known \cite[VI 4.8-9]{BW} that there is a decomposition
\be
\label{taud}
\tau_d = \oplus_{a+b = d} \tau_{a,b},
\ee
where $\tau_{a,b}$ is the representation of $K$ on $\bigwedge^a \p_- \otimes \bigwedge^b \p_+$.  Moreover, we have
\be
\label{tauab}
\tau_{a,b} = \oplus_{k=0}^{\min(a,b)} \tau'_{a-k,b-k}
\ee
for $a+b \le N-1$, where the representations $\tau'_{a,b}$ are irreducible with highest weight

\be
\label{tauweight}
\sum_{i=1}^b \varepsilon_i - \sum_{i = N-a}^{N-1} \varepsilon_i + (a-b)\varepsilon_N.
\ee
Here, $\{ \varepsilon_i \}$ is the standard basis for $\gt^*$ consisting of elements that are real on $i \gt_0$.  These decompositions correspond to the Hodge-Lefschetz decomposition for the cohomology of $X(\gn)$.

We now recall the classification of cohomological representations of $G$ due to Vogan and Zuckerman \cite{VZ}.  We choose an element $H \in i\gt_0$, so that $\ad(H)$ has real eigenvalues.  We let $\gq \subset \g$ be the parabolic subalgebra $\gl + \gu$, where $\gl = Z_\g(H)$ and $\gu$ is the sum of all the eigenspaces for $\ad(H)$ with positive eigenvalues.  Because $\gk$ and $\p_\pm$ are stable under $\ad(H)$, we have $\gu = \gu \cap \gk + \gu \cap \p_- + \gu \cap \p_+$.  We define $R_\pm = \dim( \gu \cap \p_\pm)$ and $R = R_+ + R_-$, and let $\mu = 2\rho(\gu \cap \p)$, which is the sum of the roots of $\gt$ in $\gu \cap \p$. We fix a set of positive roots for $\gt$ in $\gl\cap \gk$ so that a positive root system for $\gt$ in $\gk$ is determined (together with $\gu\cap \gk$). Then $\mu$ is a highest weight for the positive root system.

The main theorem of Vogan and Zuckerman is that there is a unique irreducible unitary representation $A_\gq$ of $G$ \footnote{Note that the general unitarity of the representations $A_\gq$ is proved in \cite{V}.}  with the following properties:

\begin{itemize}

\item $A_\gq$ has the same infinitesimal character as the trivial representation.

\item $A_\gq$ contains the $K$-type with highest weight $\mu$.

\end{itemize}

They also show that any irreducible unitary representation of $G$ with nonzero $(\g,K)$-cohomology (with trivial coefficients) must be of the form $A_\gq$ for some $\gq$.  It is clear that $A_\gq$ only depends on $\gu \cap \p$.  Moreover, we have \cite[Prop 6.19]{VZ}
\be
\label{AqHodge1}
H^{R_+ + p, R_- + p}(\g,K; A_\gq) \simeq \text{Hom}_{\gl \cap \gk}( \wedge^{2p}(\gl \cap \p), \C),\quad p\ge 0,
\ee
and
\be
\label{AqHodge2}
H^{p, q}(\g,K; A_\gq) = 0
\ee
for other $(p,q)$, i.e. if $p-q\neq R_+-R_-$.

Write $H_1, \ldots, H_N$ for the entries of the real diagonal matrix $H$.  Because $A_\gq$ only depends on the orbit of $H$ under the Weyl group of $K$, we may assume that $H_1 \ge \cdots \ge H_{N-1}$.  The subspace $\gu \cap \p$, and hence $A_\gq$, only depends on the number of $H_i - H_N$ that are positive, negative, and zero.  Therefore, if $a$ and $b$ are the number of $H_i - H_N$ that are positive and negative respectively, then we have $H_{a+1} = \cdots = H_{N-1-b} = H_N$, while we may assume that all the remaining $H_i$ are distinct.  It may be seen that $R_+ = a$ and $R_- = b$, and
\[
\mu = \sum_{i = 1}^a \varepsilon_i - \sum_{i = N-b}^{N-1} \varepsilon _i - (a-b) \varepsilon_N.
\]
The representation $A_\gq$ depends only on $a$ and $b$, and we denote it by $\pi_{a,b}$.

To prove (\ref{piab3}), we will need the description of $\pi_{a,b}$ as a Langlands quotient when $a+b < N-1$, which is given by Vogan and Zuckerman in \cite[Theorem 6.16]{VZ}.  Define
\[
V = \left( \begin{array}{ccc} && 1 \\ & 0_{N-2} & \\ 1 && \end{array} \right),
\]
and let $\ga_0 = \R V$ so that $\ga_0$ is a maximal abelian subalgebra of $\p_0$.  Let $A = \exp(\ga_0)$ be the corresponding subgroup.  Define $\alpha \in \ga^*$ by $\alpha(V) = 1$.  The roots of $\ga$ in $\g$ are $\pm \alpha$ and $\pm 2\alpha$ with multiplicities $2(N-2)$ and 1 respectively, so that $\rho = (N-1)\alpha$.  Let $U$ be the unipotent subgroup corresponding to the positive roots.  Let $M = Z_K(V)$, so that
\[
M = \left\{ \left( \begin{array}{ccc} e^{i\theta} && \\ & X & \\ && e^{i\theta} \end{array} \right) : X \in U(N-2), \theta \in \R \right\}.
\]
Let $\gt_M \subset \gt$ be the diagonal Cartan subalgebra in $\gm$.  Let $\sigma$ be the irreducible representation of $M$ with highest weight given by the restriction to $\gt_M$ of
\[
\sum_{i=2}^{a+1} \varepsilon_i - \sum_{i= N-b}^{N-1} \varepsilon_i + (b-a) \varepsilon_1.
\]
Let $\nu = (N-1-d)\alpha$.  We define $I_{\nu,\sigma}$ to be the unitarily normalized induction from $P = MAU$ to $G$ of the representation $\sigma \otimes e^\nu \otimes 1$.  Then $\pi_{a,b}$ is the Langlands quotient of $I_{\nu,\sigma}$.

\subsection{Proof of Proposition \ref{cohreps} }

The assertion that $\pi_{a,b}$ are the only representations with nonzero cohomology is clear, because any such representation is isomorphic to $A_\gq$ for some $\gq$.  The calculation of $H^{p,q}(\g,K; \pi_{a,b})$ in condition (\ref{piab1}) follows from (\ref{AqHodge1}) and (\ref{AqHodge2}) after we compute $\text{Hom}_{\gl \cap \gk}( \wedge^{2p}(\gl \cap \p), \C)$.  Our assumption on $H$ implies that $\gl_0 \simeq \gu(N-d-1,1) \times \gu(1)^d$, and $\gl = \gl \cap \gk \oplus \gl \cap \p$ is the standard Cartan decomposition of $\gl$.  We wish to show that the trivial representation of $\gl \cap \gk$ occurs exactly once in $\wedge^{2p}(\gl \cap \p)$ for all $0 \le p \le N-d-1$, but this follows from the decompositions (\ref{taud}) and (\ref{tauab}) for $\gu(N-d-1,1)$, combined with the fact that $\tau'_{p,q}$ is trivial if and only if $p=q=0$ as one sees from the highest weight formula (\ref{tauweight}).

The description of the Langlands parameter of $\pi_{a,b}$ in (\ref{piab2}) follows from \cite[Section 5.3]{BC}.

To prove assertion (\ref{piab3}), we may assume that $a+b < N-1$ as otherwise $\pi_{a,b}$ lies in the discrete series.  When $a+b < N-1$, the assertion follows from our description of $\pi_{a,b}$ as a Langlands quotient, and well-known asymptotics for matrix coefficients, which we recall from Knapp \cite{Kn}.  Let $\overline{P} = MA \overline{U}$ be the opposite parabolic to $P$, and let $\overline{I}_{\sigma,\nu}$ be the normalized induction of $\sigma \otimes e^\nu \otimes 1$ from $\overline{P}$ to $G$.  Let $A(\sigma,\nu): I_{\sigma,\nu} \to \overline{I}_{\sigma,\nu}$ be the intertwiner
\[
A(\sigma,\nu)f(g) = \int_{\overline{U}} f(ug) du,
\]
which converges by \cite[VII, Prop 7.8]{Kn}.  Then the image of $A(\sigma,\nu)$ is isomorphic to the Langlands quotient $\pi_{a,b}$ of $I_{\sigma,\nu}$.  We introduce the pairing on $I_{\sigma,\nu}$ given by
\[
\langle f,g \rangle = \int_K \langle f(k), g(k) \rangle_\sigma dk
\]
where $\langle \cdot, \cdot \rangle_\sigma$ denotes a choice of inner product on $\sigma$.  If we choose $g \in I_{\sigma,\nu}$ to pair trivially with the kernel of $A(\sigma,\nu)$, then $\langle I_{\sigma,\nu}( \cdot ) f, g \rangle$ is a matrix coefficient of $\pi_{a,b}$, and all coefficients are realized in this way.  The asymptotic behaviour of the coefficients is given by \cite[VII, Lemma 7.23]{Kn}, which states that
\be
\label{coeffasymp}
\underset{a \to \infty}{\lim} e^{(\rho-\nu) \log a} \langle I_{\sigma,\nu}(a)f, g \rangle = \langle A(\sigma,\nu)f(1), g(1) \rangle_\sigma.
\ee
As $\nu = (N-d-1)\alpha$, \cite[VIII, Theorem 8.48]{Kn} implies that $p(\pi_{a,b}) \le 2(N-1)/d$.  It also follows from that theorem that to prove $p(\pi_{a,b}) = 2(N-1)/d$, we need only show that the right hand side of (\ref{coeffasymp}) is nonzero for some choice of $f$ and $g$, subject to the condition that $g$ pairs trivially with $\ker A(\sigma,\nu)$.  To do this, choose $f \in I_{\sigma,\nu}$ such that $A(\sigma,\nu)f \neq 0$, and some nonzero $g$ of the required type.  Because $A(\sigma,\nu)$ is an intertwiner, after translating $f$ by $K$ we may assume that $A(\sigma,\nu)f(1) \neq 0$.  Because $\ker A(\sigma,\nu)$ is an invariant subspace, we may likewise assume that $g(1) \neq 0$.  Because $\sigma$ was irreducible, translating by $M$ we may also assume that $\langle A(\sigma,\nu)f(1), g(1) \rangle_\sigma \neq 0$ as required.

\section{Application of the global classification}\label{application}

  As in the notation section, $(\xi,z):G^*\to G$ is an extended pure inner twist of the quasi-split unitary group $G^*=U(N)$ over $F$. We always assume that $G_{v_0}$ is isomorphic to $U(N-1,1)$ at a real place $v_0$ of $F$ and that $G_v$ is compact at all other real places $v$. Although much of our argument works for general inner forms, the assumption significantly simplifies some combinatorial and representation-theoretic arguments (especially of Section \ref{cohomologicalrep}) and ensures that we obtain expectedly optimal upper bounds in all degrees in the main theorem.

 Let $\psi \in \Psi_2(G^*, \eta_{\chi_\kappa} )$. The main local theorem of \cite{KMSW} defines local packets $\Pi_{\psi_v}(G,\xi)$ consisting of finitely many (possibly reducible and non-unitary\footnote{The issue is that $\psi_v\in \Psi^+(G^*_v)$ is not known to be in $\Psi(G^*_v)$ in general although it is expected. However this is actually known for parameters contributing to cohomology from the known cases of the Ramanujan conjecture, see Section \ref{Archcontrol}.  It follows that all representations in the local packets we will consider are irreducible and unitary.}) representations of $G_v$ such that $\Pi_{\psi_v}(G,\xi)$ contains an unramified representation (relative to $K_v$) at all but finitely many $v$. The global packet $\Pi_{\psi}(G,\xi)$ consists of restricted tensor products $\pi=\otimes'_v \pi_v$ with $\pi_v\in \Pi_{\psi_v}(G,\xi)$. The parameter $\psi$ determines a sign character $\epsilon_\psi$ on a certain centralizer group (in $\widehat G$) attached to $\psi$, and \cite{KMSW} defines a subset
 $\Pi_\psi(G, \xi, \epsilon_\psi)$ of $\Pi_\psi(G, \xi)$ by imposing a sign condition. We need not recall the condition as it will be soon ignored along the way to an upper bound. Theorem 1.7.1 of \cite{KMSW} asserts the following.

\begin{theorem}
\label{endoscopy}

There is a $G(\A)$-module isomorphism

\bes
L^2_\textup{disc}( G(F) \backslash G(\A) ) \simeq \bigoplus_{\psi \in \Psi_2(G^*,  \eta_{\chi_\kappa} ) } \bigoplus_{\pi \in \Pi_\psi(G, \xi, \epsilon_\psi) } \pi.
\ees

\end{theorem}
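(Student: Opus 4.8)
This is the global endoscopic classification for the inner form $G$, and the ``proof'' at this point in the paper is the citation to Theorem 1.7.1 of \cite{KMSW}; what follows is a sketch of how that argument proceeds, in the template of Arthur for classical groups and of Mok \cite{M} for quasi-split unitary groups. The plan is to compare trace formulas. First I would set up the stabilization of the $\theta$-twisted trace formula for $\widetilde G(N) = G(N)\rtimes \theta$ with $G(N)=\mathrm{Res}_{E/F}GL(N)$: the twisted discrete spectrum of $\widetilde G(N)$ is read off directly from Moeglin--Waldspurger's description of the discrete spectrum of $GL(N,\A_E)$, which produces the formal parameters $\widetilde\Psi_{\mathrm{ell}}(N)$, and stabilizing the twisted trace formula (which requires the twisted weighted fundamental lemma and the stabilization results of Moeglin--Waldspurger) rewrites the twisted-elliptic terms as a sum of \emph{stable} trace formulas over the twisted elliptic endoscopic data of $\widetilde G(N)$ --- precisely the quasi-split $U(n)$ entering via the $L$-morphisms $\eta_{\chi_\kappa}$, together with products of such at Levi level.

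Next I would install the full local theory at every place $v$ of $F$: for each $\psi_v \in \Psi^+(U(N)_v)$ the local packet $\Pi_{\psi_v}(G,\xi)$, together with the endoscopic character identities characterizing it (the local intertwining relation), all normalized via the fixed extended pure inner twist $(\xi,z)$ and the resulting Whittaker-normalized transfer factors. At archimedean $v$ this is the Adams--Johnson / Arthur--Vogan construction; at finite $v$ where everything is unramified it is forced by the Satake isomorphism and the fundamental lemma; at ramified finite $v$ this is the technical core of \cite{KMSW}, resting on the local Langlands correspondence for inner forms of $GL(N)$ and a careful analysis of twisted endoscopic transfer. With the local input in hand, I would run the global comparison by induction on $N$: for fixed $\psi \in \Psi_2(G^*,\eta_{\chi_\kappa})$, form the stable combination of characters in $\Pi_\psi(G^*)$ dictated by Mok's stable multiplicity formula for the quasi-split group, transfer it to $G$ through the local packets, and match it against the spectral side of the $G$-trace formula. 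The inductive hypothesis disposes of all proper sub-parameters $\phi_i\boxtimes\nu(n_i)$ with $m_in_i<N$, leaving the genuinely discrete contribution of $\psi$ to $L^2_{\mathrm{disc}}(G(F)\backslash G(\A))$; the subset $\Pi_\psi(G,\xi,\epsilon_\psi)$ that actually occurs, and its multiplicity, then fall out of the requirement that the product of the local normalized pairings against the global sign character $\epsilon_\psi$ be trivial, exactly as in Arthur and Mok.

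The main obstacle is twofold. Logically upstream of everything is the stabilization of the twisted trace formula and the twisted (weighted) fundamental lemma, which is exactly why this statement is taken here as a hypothesis rather than reproved. Within \cite{KMSW} itself, the hardest step is the local one --- constructing the ramified local packets $\Pi_{\psi_v}(G,\xi)$ for the inner form $G$ and proving the local intertwining relation in the twisted-endoscopic setting. A secondary but genuine difficulty, special to inner forms, is the bookkeeping of normalizations: choosing compatible Whittaker data and transfer factors through the extended pure inner twist $(\xi,z)$ so that the local packets glue to a well-defined global packet and the multiplicity formula is canonical and independent of the auxiliary choices.
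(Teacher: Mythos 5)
Your proposal is correct and matches the paper: Theorem \ref{endoscopy} is not proved here but is taken as a hypothesis, quoted verbatim from Theorem 1.7.1 of \cite{KMSW}, exactly as you say. Your sketch of how that theorem is established (twisted stabilization for $G(N)$, the local packets and character identities for the inner form via the extended pure inner twist, and the inductive global comparison with the multiplicity formula cut out by $\epsilon_\psi$) is an accurate account of the argument the paper is relying on, but none of it is reproduced or needed in the paper itself.
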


Let $S$ be a finite set of finite places of $F$ containing all places above 2 and 3, and all places at which $E$ or $G$ ramify.  Let $\gn\subset \cO_F$ be a nonzero ideal whose prime factors are split in $E$ and don't lie in $S$. In Section \ref{notation} we have introduced hyperspecial subgroups $K_v$ of $G(F_v)$ when $v\notin S$. For $v\in S$ let $K_v$ be an arbitrary open compact subgroup of $G(F_v)$. Now we define the congruence subgroup $K(\gn)=\prod_v K(\gn)_v$, where $K(\gn)_v$ is given as follows for each finite place $v$. Define $K(\gn)_v$ to be $K_v$ if $v$ does not divide $\gn$. If $v|\gn$ then we have fixed an isomorphism $K_v\simeq GL(N,\cO_{E_w})$, and $K(\gn)_v$ is the subgroup of $K_v$ consisting of elements congruent to the identity modulo $\gn$. Let $K_\infty$ denote a maximal compact subgroup of $G(F\otimes_\Q \R)$.
Often we write $[G]$ for the quotient $G(F)\backslash G(\A)$, and likewise when $G$ is replaced with quasi-split unitary groups.

We would like to investigate the cohomology of the arithmetic manifold
\be\label{X(n)}
X(\gn) = G(F)\backslash G(\A) / K(\gn) K_\infty.
\ee
Since $G(F\otimes_\Q \R)/K_\infty$ is isomorphic to the symmetric space $U(N-1,1)/(U(N-1)\times U(1))$, which has complex dimension $N-1$, we see that the complex dimension of $X(\gn)$ is also $N-1$.

We take the first step in proving Theorem \ref{maintheorem} on bounding the $L^2$-Lefschetz numbers $h^d_{(2)}(X(\gn))$ in degrees $0\le d<N-1$, as $\gn$ varies.
Matsushima's formula gives

\bes
h^d_{(2)}(X(\gn)) = \sum_{\pi \in L^2_\text{disc}([G]) } m(\pi) h^d( \g, K_\infty; \pi_\infty) \dim \pi_f^{K(\gn)}
\ees
(see \cite{BW} in the noncompact case), and combining this with Theorem \ref{endoscopy} gives

\be\label{firststep}
h^d_{(2)}(X(\gn)) \le \sum_{\psi \in \Psi_2(G^*, \eta_{\chi_\kappa} ) } \sum_{\pi \in \Pi_\psi(G, \xi) } h^d( \g, K_\infty; \pi_\infty) \dim \pi_f^{K(\gn)}.
\ee

\section{Bounding the contribution of a single parameter}\label{singleparameter}

In this section, we bound the contribution of a single parameter $\psi$ to the right hand side of (\ref{firststep}).  The form of our bound will depend on the shape of $\psi$, and so throughout this section we shall fix a shape $\cS = (n_1,m_1), \ldots, (n_k,m_k)$ and define $\Psi_2(G^*,\eta_{\chi_\kappa})_\cS$ to be the set of parameters with that shape.  If $\psi \in \Psi_2(G^*,\eta_{\chi_\kappa})_\cS$, we define $\mu_i \in \widetilde{\Phi}_\text{sim}(m_i)$ to be such that $\psi^N = \boxplus_{i \ge 1} \mu_i \boxtimes \nu(n_i)$.  Each $\mu_i$ represents a simple generic parameter $\phi_i \in \Phi_{\mathrm{sim}} ( U(m_i), \eta_{\chi_{\kappa_i}} )$ for a unique sign $\kappa_i\in \{\pm 1\}$ determined as in \cite[(2.4.8)]{M}.  We define

\begin{align}
\label{taudef}
\tau(\cS) & = \binom{N}{2} - \sum_{i \ge 1} n_i \binom{m_i}{2}, \\
\notag
\tau_1(\cS) & = \binom{N}{2} - \binom{n_1}{2} - \sum_{i \ge 2} n_i \binom{m_i}{2}, \\
\notag
\tau_2(\cS) & = \tau(\cS) + (n_1-1), \\
\notag
\tau_3(\cS) & = \tau(\cS) + 4(n_1 - 1) + \epsilon.
\end{align}
Here, $\epsilon > 0$ is an arbitrarily small constant that may vary from line to line.  Any implied constants in bounds for quantities containing $\tau_3(\cS)$ will be assumed to depend on $\epsilon$.  We also define

\bes
\sigma(\cS) = \sigma_3(\cS) = \sum_{i=1}^k n_i - 1, \quad \sigma_1(\cS) = \sum_{i=2}^k n_i, \quad \sigma_2(\cS) = 2(n_1 - 1) + \sum_{i=2}^k n_i.
\ees

For each $1 \le i \le k$ and finite place $v$, define a compact open subgroup $K_{i,v}$ of $U(m_i)_v$ as follows.  If $v \notin S$, then $K_{i,v}$ is the standard hyperspecial subgroup, and if $v \in S$ then $K_{i,v}$ will be chosen during the proof of Proposition \ref{singlefinite}.  Let $K_i = \prod_v K_{i,v}$, and let $K_i(\gn)$ be the principal congruence subgroup of $K_i$ of level $\gn$. Let $P \subset GL(N)$ be the standard parabolic subgroup with Levi $\prod_{i=1}^k GL(m_i)^{n_i}$.

\begin{prop}
\label{singlefinite}

There is a choice of $K_{i,v}$ for $v \in S$ with the following property.  Let $\psi\in \Psi_2(G^*,\eta_{\chi_\kappa})_\cS$, and assume that $\phi_i$ (arising from $\psi$ as above) is bounded everywhere for each $1\le i\le k$. Then

\be
\label{packetsum}
\sum_{\pi \in \Pi_\psi(G, \xi)} \dim \pi_f^{K(\gn)} \ll \prod_{v | \gn} (1 + 1/q_v)^{\sigma(\cS)} N\gn^{\tau(\cS)} \prod_{i\ge 1} \left( \sum_{ \pi_i \in \Pi_{\phi_i}(U(m_i)) } \dim \pi_{i,f}^{K_i(\gn)} \right)^{n_i}.
\ee
Moreover, if $m_1 = l$ with $l = 1, 2, 3$, we have

\bes
\sum_{\pi \in \Pi_\psi(G, \xi)} \dim \pi_f^{K(\gn)} \ll \prod_{v | \gn} (1 + 1/q_v)^{\sigma_l(\cS)} N\gn^{\tau_l(\cS)} \sum_{ \pi_1 \in \Pi_{\phi_1}(U(m_1)) } \dim \pi_{1,f}^{K_1(\gn)}
\prod_{i \ge 2} \left( \sum_{ \pi_i \in \Pi_{\phi_i}(U(m_i)) } \dim \pi_{i,f}^{K_i(\gn)} \right)^{n_i}.
\ees

\end{prop}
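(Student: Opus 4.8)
The plan is to bound both sides of (\ref{packetsum}) factor by factor over the finite places of $F$. Since $\Pi_\psi(G,\xi)$ is the restricted product of the local packets and $\dim\pi_f^{K(\gn)}=\prod_{v<\infty}\dim\pi_v^{K(\gn)_v}$, and since at almost all $v$ the packet has a unique member with a (one-dimensional) $K_v$-fixed space, one has $\sum_{\pi\in\Pi_\psi(G,\xi)}\dim\pi_f^{K(\gn)}=\prod_{v<\infty}\bigl(\sum_{\pi_v\in\Pi_{\psi_v}(G,\xi)}\dim\pi_v^{K(\gn)_v}\bigr)$, and likewise $\sum_{\pi_i\in\Pi_{\phi_i}(U(m_i))}\dim\pi_{i,f}^{K_i(\gn)}=\prod_{v<\infty}\bigl(\sum_{\pi_{i,v}\in\Pi_{\phi_{i,v}}(U(m_i)_v)}\dim\pi_{i,v}^{K_i(\gn)_v}\bigr)$. (Here one first invokes the boundedness hypothesis on the $\phi_i$, together with the known cases of the Ramanujan conjecture recalled in Section \ref{Archcontrol}, so that $\psi_v\in\Psi(G^*_v)$ and the members of $\Pi_{\psi_v}(G,\xi)$ are genuine irreducible unitary representations.) It then suffices to bound the local factor at each finite $v$, the finitely many $v\in S$ contributing only a constant. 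For $v\notin S$ with $v\nmid\gn$ the group and the parameter are unramified and the packet has a single spherical member, so the factor is $1$, matching the right side of (\ref{packetsum}) (where $K_i(\gn)_v$ is hyperspecial). For $v\in S$ the level is trivial, so it is enough to choose $K_{i,v}$ and bound $\sum_{\pi_v\in\Pi_{\psi_v}(G,\xi)}\dim\pi_v^{K_v}$ by a constant (independent of $\psi$) times $\prod_i\bigl(\sum_{\pi_{i,v}}\dim\pi_{i,v}^{K_{i,v}}\bigr)^{n_i}$; for this I would use the description in \cite{KMSW} (Moeglin's construction) of the members of $\Pi_{\psi_v}(G,\xi)$ as subquotients of representations parabolically induced from Speh representations built from the $\pi_{\phi_{i,v}}$, take $K_{i,v}$ large enough to contain the image of $K_v$ under the relevant Jacquet functor (so that a nonzero $K_v$-fixed vector forces a nonzero $K_{i,v}$-fixed vector in each $\pi_{\phi_{i,v}}$, keeping the right side $\ge 1$ whenever the left side is nonzero), and then observe that boundedness of $\phi_{i,v}$ confines the $\pi_{\phi_{i,v}}$ that can contribute, for fixed $K_v$, to finitely many up to unramified twist, with uniformly bounded invariant dimensions.

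The crux is a place $v\mid\gn$. Such $v$ is split in $E$ and outside $S$, so $G_v\simeq GL(N,E_w)$, $\Pi_{\psi_v}(G,\xi)=\{\pi_{\psi_v}\}$, and $\pi_{\psi_v}$ is the Langlands quotient of the representation $\rho_{\psi_v}$ induced from $P=P_\cS$ of the twists of the $\pi_{\phi_{i,v}}$ in (\ref{langlandsquot}). By the Iwasawa decomposition of $GL(N,\cO_{E_w})$ and Mackey theory for the finite quotient $GL(N,\cO_{E_w}/\gn)$ (using that $K(\gn)_v$ is normal) one computes $\dim\rho_{\psi_v}^{K(\gn)_v}=|(GL_N/P_\cS)(\cO_{E_w}/\gn)|\prod_i(\dim\pi_{\phi_{i,v}}^{K_i(\gn)_v})^{n_i}$, the unramified twists by powers of $|\det|$ being invisible to a principal congruence subgroup. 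Bounding $\dim\pi_{\psi_v}^{K(\gn)_v}$ by this is wasteful when some $n_i\ge 2$: the twists of $\pi_{\phi_{i,v}}$ inside the $\nu(n_i)$-block are linked, $\rho_{\psi_v}$ is reducible, and the Langlands quotient is strictly smaller. Converting this into a quantitative gain is the main work; I would do it by analyzing the Jacquet modules of $\pi_{\psi_v}$ (equivalently, realizing $\pi_{\psi_v}$ via the parabolic whose Levi blocks are the distinct exponent levels of $\phi_{\psi_v}$, each carrying a twist of an induced tempered representation, and bounding invariants level by level), obtaining $\dim\pi_{\psi_v}^{K(\gn)_v}\le(1+1/q_v)^{\sigma(\cS)}N\gn_v^{\tau(\cS)}\prod_i(\dim\pi_{\phi_{i,v}}^{K_i(\gn)_v})^{n_i}$, where $N\gn_v:=q_v^{\ord_v\gn}$, the identity $\tau(\cS)=\dim GL_N/P_\cS$ is used, and the $(1+1/q_v)$ factors come from elementary bounds on the flag-variety point counts that arise as one peels off the blocks. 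Multiplying over $v\mid\gn$ gives the prefactor of (\ref{packetsum}), while the factor $1$ at $v\notin S$, $v\nmid\gn$ and the constant at $v\in S$ close out the estimate.

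For the ``moreover'' clause one revisits only the places $v\mid\gn$ and treats the first block $\mu_1\boxtimes\nu(n_1)$ separately. If $m_1=1$ this block contributes, at each split $v\mid\gn$, the one-dimensional representation $\chi_v\circ\det$ of $GL(n_1,E_w)$, so $\pi_{\psi_v}$ is induced from the larger standard parabolic whose first Levi factor is $GL(n_1)$ instead of $GL(1)^{n_1}$; this replaces $\tau(\cS),\sigma(\cS)$ by $\tau_1(\cS),\sigma_1(\cS)$ and leaves a single factor $\dim\pi_{\phi_{1,v}}^{K_1(\gn)_v}$, which after summing over $\phi_1$ becomes the first-power sum on the right. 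If $m_1=2$ or $3$ the first block is no longer one-dimensional, but the $n_1$ twisted copies of $\pi_{\phi_{1,v}}$ in (\ref{langlandsquot}) are the same representation up to unramified twist, so one keeps one factor $\dim\pi_{\phi_{1,v}}^{K_1(\gn)_v}$ and bounds the remaining $n_1-1$ by the uniform bound on the dimension of principal-congruence invariants of tempered representations of $GL(m_1,E_w)$ --- soft for $GL(2)$ via the Weil-representation construction of supercuspidals (Section \ref{boundGL2}) and Theorem \ref{invdim} for $GL(3)$ --- producing the extra powers $N\gn^{n_1-1}$ and $N\gn^{(4+\epsilon)(n_1-1)}$, i.e.\ $\tau(\cS)$ is replaced by $\tau_2(\cS),\tau_3(\cS)$ and only one $\phi_1$-sum survives.

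The hard part is the split-place estimate in the second paragraph. Bounding $\dim\pi_{\psi_v}^{K(\gn)_v}$ by the full induced module $\rho_{\psi_v}$ already overshoots the claimed exponent whenever $n_1\ge 2$ (this is visible already for $\cS=(2,1),(1,1)$ with $N=3$), so the proof must genuinely exploit the reducibility of $\rho_{\psi_v}$ coming from the linked data inside each $\nu$-block and convert it into the gain recorded by $\sigma(\cS)$ and $\tau(\cS)$; keeping the flag-variety and Speh-module point counts under control while doing so is the delicate bookkeeping of Section \ref{singleparameter}. A secondary input, needed only for the ``moreover'' clause with $m_1=3$, is the uniform invariant-dimension bound of Theorem \ref{invdim}, which lies well below the trivial bound (cf.\ the remark after Corollary \ref{corinvdim}); the $GL(2)$ analogue is comparatively easy.
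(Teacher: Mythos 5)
Your overall skeleton (factorize both sides over the finite places, compute the split $v\mid\gn$ factor via the induced module from $P_\cS$, and use the three special devices for $m_1\le 3$) matches the paper, but there are two genuine problems. First, you have misdiagnosed the difficulty at split $v\mid\gn$. Since $\tau(\cS)=\dim GL_N/P_\cS$ where $P_\cS$ has Levi $\prod_i GL(m_i)^{n_i}$, the ``wasteful'' bound $\dim\pi_{\psi_v}^{K(\gn)_v}\le\dim\rho_{\psi_v}^{K(\gn)_v}=[\widetilde K_w:\widetilde K_w\cap\widetilde K(\gn)_wP_w]\prod_i(\dim\mu_{i,w}^{\widetilde K_i(\gn)_w})^{n_i}$ already yields \eqref{packetsum} \emph{exactly}, with the $(1+1/q_v)^{\sigma(\cS)}$ coming from the point count of the flag variety; there is no overshoot and no need to exploit the reducibility of $\rho_{\psi_v}$ for the first inequality. (In your example $\cS=(2,1),(1,1)$, $N=3$, one has $\tau(\cS)=3=\dim GL_3/B$; what fails there is only the passage from \eqref{packetsum} to the exponent $Nd+1$ of the main theorem, which is precisely what the ``moreover'' clause repairs.) The Jacquet-module analysis you flag as ``the main work'' is therefore a phantom difficulty for \eqref{packetsum} --- and since you only state an intention to carry it out, if it \emph{were} needed your proof would be incomplete. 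The actual gains for $m_1\le3$ are exactly the ones you then describe (larger parabolic with $\mu_1\circ\det$ when $m_1=1$; uniform invariant-dimension bounds for supercuspidals of $GL_2$, $GL_3$ when $m_1=2,3$), and that part of your proposal agrees with the paper.

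Second, and more seriously, the places where $v$ is nonsplit in $E/F$ are not treated. Your case analysis assumes that at $v\notin S$, $v\nmid\gn$ ``the parameter is unramified and the packet has a single spherical member,'' but a global $\psi$ can be ramified at such places, and at nonsplit $v$ the packet $\Pi_{\psi_v}(G,\xi)$ generally has several members. The paper's Lemma \ref{nsunramified} uses the twisted fundamental lemma and Mok's (twisted) character identities --- with a further reduction to $U(a)\times U(b)$ when $s_{\psi_v}\notin\{\pm1\}$ --- to prove both that $\sum_{\pi_v}\dim\pi_v^{K_v}\le1$ and, crucially, that equality forces the corresponding factor $\prod_i(\sum_{\pi_{i,v}}\dim\pi_{i,v}^{K_{i,v}})^{n_i}$ to equal $1$; without this second half the product over places can have a zero on the right against a nonzero left side, and the global inequality fails. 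Likewise at nonsplit $v\in S$ your Jacquet-functor argument does not apply (the packet members of the inner form at a nonsplit place are not subquotients of representations induced from the $\pi_{\phi_{i,v}}$ in any direct sense); the paper instead transfers to $\widetilde G(N)_v$ via character identities, uses Bernstein's uniform admissibility for the uniform upper bound, and needs a compactness argument in the bounded parameters (Lemma \ref{cpctdescent}) to choose $K_{i,v}$ independently of $\psi_v$. These endoscopic inputs are a substantial part of the proof and are missing from your proposal.
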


The first step in proving Proposition \ref{singlefinite} is to write both sides as a product over the finite places.  We describe this in the case of the first inequality, as the second is similar.  We have

\be
\label{finitefactor}
\sum_{\pi \in \Pi_\psi(G, \xi)} \dim \pi_f^{K(\gn)} = \prod_{v \nmid \infty} \sum_{\pi_v \in \Pi_{\psi_v}(G_v, \xi_v)} \dim \pi_v^{K_v(\gn)}
\ee
and

\bes
\prod_{i\ge 1} \left( \sum_{ \pi_i \in \Pi_{\phi_i}(U(m_i)) } \dim \pi_{i,f}^{K_i(\gn)} \right)^{n_i} = \prod_{v \nmid \infty} \prod_{i \ge 1}\left( \sum_{ \pi_{i,v} \in \Pi_{\phi_{i,v}}(U(m_i)) } \dim \pi_{i,v}^{K_{i,v}(\gn)} \right)^{n_i}.
\ees
It therefore suffices to prove that

\be
\label{factorineq}
\sum_{\pi_v \in \Pi_{\psi_v}(G_v, \xi_v)} \dim \pi_v^{K_v(\gn)} \le C_v (1 + O(q_v^{-2}) ) \prod_{i \ge 1} \left( \sum_{ \pi_{i,v} \in \Pi_{\phi_{i,v}}(U(m_i)) } \dim \pi_{i,v}^{K_{i,v}(\gn)} \right)^{n_i}
\ee
for all finite $v$.  Here, the constant $C_v$ may be arbitrary for $v \in S$, while for $v \notin S$ it is either 1 if $v$ is inert in $E/F$, or the $v$-component of the constant in (\ref{packetsum}) if $v$ is split.  It is important to keep $C_v$ independent of $\psi_v$ and $\gn$ (but it could depend on $K_v$) for the application to the proof of the main theorem. We divide the proof of \eqref{factorineq} into four cases, depending on whether $v$ is split in $E$, and whether $v \in S$. Thus the proof of Proposition \ref{singlefinite} will be complete by Lemmas \ref{sunramified}, \ref{sramified}, \ref{nsunramified}, and \ref{nsramified} below.


\subsection{$v$ split in $E/F$, $v \notin S$}

These $v$ are the only ones which require us to consider the special cases $m_1 = 1, 2, 3$ of Proposition \ref{singlefinite} separately.  In this case, the local packets under consideration each contain a single representation of $GL(N,F_v)$ or $GL(m_i,F_v)$.  The bound we prove, Lemma \ref{sunramified}, is an application of the fact that the representation in $\Pi_{\psi_v}(G_v, \xi_v)$ is a subquotient of an explicit induced representation.

\begin{lemma}
\label{sunramified}

Let $\Pi_{\psi_v}(G, \xi) = \{ \pi_v \}$ and $\Pi_{\phi_{i,v} }( U(m_i)) = \{ \pi_{i,v} \}$.  We have

\be
\label{sunramified1}
\dim \pi_v^{K(\gn)_v} \le (1 + 1/q_v)^{\sigma(\cS)} (1 + O(q_v^{-2}) ) N\gn_v^{\tau(\cS)} \prod_{i\ge 1} \left( \dim \pi_{i,v}^{K_{i}(\gn)_v} \right)^{n_i},
\ee
and if $m_1 = l$ with $l = 1, 2, 3$ we have

\be
\label{sunramified2}
\dim \pi_v^{K(\gn)_v} \le C(\epsilon, q_v) (1 + 1/q_v)^{\sigma_l(\cS)} (1 + O(q_v^{-2}) ) N\gn_v^{\tau_l(\cS)} \dim \pi_{1,v}^{K_1(\gn)_v} \prod_{i \ge 2} \left( \dim \pi_{i,v}^{K_i(\gn)_v} \right)^{n_i}.
\ee
The terms involving $1 + 1/q_v$ only need to be included if $v | \gn$.  The term $C(\epsilon, q_v) = 1$ if $l \neq 3$ or $q_v$ is greater than a constant depending on $\epsilon$.

\end{lemma}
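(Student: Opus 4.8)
The plan is to deduce both inequalities from one elementary computation — the dimension of $K(\gn)_v$-fixed vectors in a parabolically induced representation of $GL_r(F_v)$ — together with the fact, recalled in Section~\ref{notation}, that $\pi_v=\pi_{\psi_v}$ is an irreducible subquotient of the explicitly induced representation $\rho_{\psi_v}$. Since $v$ is split and lies outside $S$, identify $G_v\simeq GL_N(F_v)$, $U(m_i)_v\simeq GL_{m_i}(F_v)$ and $K_v\simeq GL_N(\cO_{F_v})$, with $K(\gn)_v$ the principal congruence subgroup of $K_v$ of level $\gn$; write $q=q_v$ and $a=\ord_v(\gn)$, so $N\gn_v=q^{a}$, and recall that $\Pi_{\psi_v}(G,\xi)=\{\pi_v\}$ and $\Pi_{\phi_{i,v}}(U(m_i))=\{\pi_{i,v}\}$ are singletons with $\pi_{i,v}=\pi_{\phi_{i,v}}$ unramified. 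If $v\nmid\gn$ then all relevant spaces of invariants are one–dimensional and $N\gn_v=1$, so both inequalities are trivial; assume henceforth $v\mid\gn$, so $a\ge 1$.

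The computation is as follows. Let $P=MU\subset GL_r$ be a standard parabolic with $M=\prod_sGL_{r_s}$, let $\sigma$ be a finite-length representation of $M(F_v)$, and let $K=GL_r(\cO_{F_v})$ with $K'\subset K$ the principal congruence subgroup of level $\gn$. By the Iwasawa decomposition $GL_r(F_v)=P(F_v)K$ (and triviality of the modular character on the compact group $P\cap K$), the restriction of $\Ind_P^{GL_r}\sigma$ to $K$ is $\Ind_{P\cap K}^{K}(\sigma|_{M\cap K})$; since $K'$ is normal in $K$ and $P\cap K'=(M\cap K')(U\cap K')$ (valid because $P$ is standard, so $U\cap K'$ acts trivially), Mackey theory gives
\[
\dim\bigl(\Ind_P^{GL_r}\sigma\bigr)^{K'}=\bigl[K:(P\cap K)K'\bigr]\cdot\dim\sigma^{M\cap K'},
\]
and counting points of $GL_r(\cO_{F_v}/\gn)$ and of $P(\cO_{F_v}/\gn)$ gives
\[
\bigl[K:(P\cap K)K'\bigr]=\frac{[K:K']}{[P\cap K:P\cap K']}=q^{a\dim GL_r/P}\cdot\frac{\prod_{j=1}^{r}(1-q^{-j})}{\prod_s\prod_{j=1}^{r_s}(1-q^{-j})}.
\]
I will use freely that $\prod_{j=1}^{\ell}(1-q^{-j})/(1-q^{-1})^{\ell}=(1-q^{-1})^{1-\ell}(1+O(q^{-2}))=(1+1/q)^{\ell-1}(1+O(q^{-2}))$ and that unramified twists by powers of $|\det|$ do not change dimensions of $K'$-invariants.

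For \eqref{sunramified1}: $\pi_v$ is a subquotient of $\rho_{\psi_v}=\Ind_{P_\cS}^{GL_N}\sigma$, with $\sigma$ a twist of $\bigotimes_i\pi_{\phi_{i,v}}^{\otimes n_i}$ on $M_\cS=\prod_iGL_{m_i}^{n_i}$, so $\dim\pi_v^{K(\gn)_v}\le\dim\rho_{\psi_v}^{K(\gn)_v}$, and the computation reduces \eqref{sunramified1} to the two facts that $\dim GL_N/P_\cS=\binom{N}{2}-\sum_in_i\binom{m_i}{2}=\tau(\cS)$ (immediate from \eqref{taudef} and $N=\sum_in_im_i$) and that the product–ratio, carrying one factor $(1-q^{-1})$ upstairs and $\sum_in_i=\sigma(\cS)+1$ downstairs, is $(1+1/q)^{\sigma(\cS)}(1+O(q^{-2}))$. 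For \eqref{sunramified2} with $m_1=l\in\{1,2,3\}$: since $\pi_{\psi_v}$ is the representation attached to the $L$-parameter $\phi_{\psi_v}$, and $\phi_{\psi_v}$ splits off the parameter of $\pi_1':=\pi_{\mu_1\boxtimes\nu(n_1),v}$ (a representation of $GL_{ln_1}(F_v)$), concatenating Langlands data exhibits $\pi_v$ as a subquotient of $\Ind_{P'}^{GL_N}\bigl(\pi_1'\otimes\bigotimes_{i\ge2}\pi_{\phi_{i,v}}^{\otimes n_i}\bigr)$ (suitably twisted), with $P'$ the standard parabolic of Levi $GL_{ln_1}\times\prod_{i\ge2}GL_{m_i}^{n_i}$. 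The computation applied to $P'$ — for which $\dim GL_N/P'=\binom{N}{2}-\binom{ln_1}{2}-\sum_{i\ge2}n_i\binom{m_i}{2}=:\tau_1'(\cS)$, equal to $\tau_1(\cS)$ when $l=1$ — then reduces matters to bounding $\dim(\pi_1')^{K'(\gn)_v}$, where $K'(\gn)_v\subset GL_{ln_1}(\cO_{F_v})$ is the principal congruence subgroup. When $l=1$, $\pi_1'=\mu_{1,v}\circ\det$ is an unramified character, so $\dim(\pi_1')^{K'(\gn)_v}=1=\dim\pi_{1,v}^{K_1(\gn)_v}$ and \eqref{sunramified2} follows with exponent $\tau_1(\cS)$. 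When $l=2,3$, the boundedness hypothesis makes $\pi_{\phi_1,v}$ tempered, hence a full irreducible unramified principal series of $GL_l$, so applying the computation to its Borel subgroup gives $\dim\pi_{1,v}^{K_1(\gn)_v}=q^{a\binom{l}{2}}(1-q^{-1})^{1-l}\prod_{j=2}^{l}(1-q^{-j})\ge q^{a\binom{l}{2}}$; and $\pi_1'$, being spherical, is a subquotient of an unramified principal series of $GL_{ln_1}$, whence $\dim(\pi_1')^{K'(\gn)_v}\le q^{a\binom{ln_1}{2}}(1+1/q)^{ln_1-1}(1+O(q^{-2}))$. Substituting, the power of $N\gn_v$ becomes $\tau_1'(\cS)+\binom{ln_1}{2}-\binom{l}{2}=\binom{N}{2}-\sum_{i\ge2}n_i\binom{m_i}{2}-\binom{l}{2}$, which by \eqref{taudef} and $N=ln_1+\sum_{i\ge2}n_im_i$ equals $\tau_2(\cS)$ when $l=2$ and is $\le\tau_3(\cS)$ when $l=3$, and the leftover $1+O(1/q)$ corrections are absorbed into $(1+1/q)^{\sigma_l(\cS)}$ and, for $l=3$, into $C(\epsilon,q_v)$ (which is $1$ once $q_v$ is large).

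The main obstacle is bookkeeping rather than ideas: one must verify exactly the combinatorial identities $\dim GL_N/P_\cS=\tau(\cS)$, $\dim GL_N/P'=\tau_1'(\cS)$ and (for $l=2,3$) $\tau_1'(\cS)+\binom{ln_1}{2}-\binom{l}{2}\le\tau_l(\cS)$, and track the $(1+1/q_v)$ powers carefully — in the case $l=2$ there is no slack whatsoever in the exponent of $N\gn_v$, so one needs the exact value $\dim\pi_{1,v}^{K_1(\gn)_v}=q^{a}(1+1/q_v)$ for $GL_2$ rather than merely the lower bound. A secondary, standard point is the ``induction in stages'' step identifying $\pi_v$ as a subquotient of the $P'$-induced representation that carries the Speh-type representation $\pi_1'$ on its $GL_{ln_1}$-block. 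There is essentially no analytic content: everything reduces to counting $GL$ over the finite rings $\cO_{F_v}/\gn$ and to elementary estimates for the products $\prod_j(1-q^{-j})$.
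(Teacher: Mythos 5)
Your treatment of \eqref{sunramified1} and of the case $m_1=1$ is essentially the paper's argument: bound $\dim\pi_v^{K(\gn)_v}$ by the invariants of the full induced representation $\rho_{\psi_v}$ (respectively, for $m_1=1$, of the quotient induced from the parabolic in which the $GL(1)^{n_1}$ block of the Levi is merged into a single $GL(n_1)$ carrying $\mu_{1,w}\circ\det$), and then compute the index $[K:(P\cap K)K(\gn)_v]$. That part is correct and matches the paper.

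The cases $l=2,3$, however, rest on a false premise. You assert that the local components $\pi_{i,v}=\pi_{\phi_{i,v}}$ are unramified, and later that temperedness of $\pi_{\phi_{1},v}$ makes it ``a full irreducible unramified principal series of $GL_l$.'' Neither is true. The ideal $\gn$ is supported at split places outside $S$, but the cuspidal automorphic representations $\mu_i$ of $GL(m_i,\A_E)$ entering the parameter are unconstrained at the places dividing $\gn$; the entire content of the lemma at level $\gn$ concerns exactly the representations that are ramified there. A tempered irreducible representation of $GL_2(F_v)$ or $GL_3(F_v)$ can be supercuspidal (or Steinberg, or induced from supercuspidal), and for such $\pi_{1,v}$ your exact formula $\dim\pi_{1,v}^{K_1(\gn)_v}=q^{a\binom{l}{2}}(1-q^{-1})^{1-l}\prod_{j=2}^{l}(1-q^{-j})$ and the lower bound $\ge q^{a\binom{l}{2}}$ both fail; likewise the Speh-type representation $\pi_1'$ need not be spherical, so it is not a subquotient of an unramified principal series of $GL_{ln_1}$ and your upper bound on $\dim(\pi_1')^{K'(\gn)_v}$ has no justification. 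This is precisely the gap the paper fills with Proposition \ref{GL2supercusp} (a uniform bound $\dim\pi^{K_n}\le q^n(1+1/q)$ for supercuspidals of $GL_2$, via the Weil representation construction) and Theorem \ref{invdim}/Corollary \ref{corinvdim} (a uniform bound of order $q^{(4+\epsilon)n}$ for arbitrary irreducible admissible representations of $GL_3$, via Howe's construction); these occupy Sections \ref{boundGL2} and \ref{sec:Hoew} and are flagged in the introduction as the key technical input. The paper's route for $l=2,3$ is also different in shape from yours: rather than merging the first $n_1$ Levi blocks into one $GL(ln_1)$ block, it starts from \eqref{sunramified1} and replaces $n_1-1$ of the $n_1$ factors $\dim\pi_{1,v}^{K_1(\gn)_v}$ by the uniform bounds just quoted, which is where the exponents $\tau_2(\cS)=\tau(\cS)+(n_1-1)$ and $\tau_3(\cS)=\tau(\cS)+4(n_1-1)+\epsilon$ and the constant $C(\epsilon,q_v)$ come from. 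To repair your argument you would need a representation-independent bound on the $K_1(\gn)_v$-invariants of ramified (in particular supercuspidal) $\pi_{1,v}$, or on the invariants of the associated Speh representation; that is the heart of the matter, not bookkeeping.
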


\begin{proof}

We recall the identification $G_v = GL(N,E_w)$, which carries $K_v$ to $\widetilde{K}_w$.  View $\psi_v$ as a member of $\Psi(GL(N,E_w))$. As in Section \ref{notation}, we have an irreducible subquotient $\pi_v=\pi_{\psi_v}$ of an induced representation $\rho_w=\rho_{\psi_v}$ of $GL(N,E_w)$. Let $P_w$ denote the block upper triangular parabolic subgroup from which $\rho_w$ is induced. (So the Levi factor of $P_w$ is $\prod_{i=1}^k GL(m_i)^{n_i}$.)


We shall prove the first bound using $\dim \pi_v^{K(\gn)_v} \le \dim \rho_w^{\widetilde{K}(\gn)_w}$.  We have

\bes
\dim \rho_w^{\widetilde{K}(\gn)_w} = [\widetilde{K}_w : \widetilde{K}_w \cap \widetilde{K}(\gn)_w P_w] \prod_{i\ge 1} \left( \dim \mu_{i,w}^{\widetilde{K}_{i}(\gn)_w} \right)^{n_i}.
\ees
The result then follows from the fact that $[\widetilde{K}_w : \widetilde{K}_w \cap \widetilde{K}(\gn)_w P_w] = 1$ if $v \nmid \gn$, while if $v | \gn$ we have

\bes
[\widetilde{K}_w : \widetilde{K}_w \cap \widetilde{K}(\gn)_w P_w] =  (1 + 1/q_v)^{\sigma(\cS)} (1 + O(q_v^{-2}) ) N\gn_v^{\tau(\cS)},
\ees
and the fact that $\pi_{i,v}$ are isomorphic to $\mu_{i,w}$ so that $\dim \mu_{i,w}^{\widetilde{K}_{i}(\gn)_w} = \dim \pi_{i,v}^{K_{i}(\gn)_v}$.

In the case $m_1 = 1$, we define $P'_w$ to be the standard parabolic which is obtained by modifying $P_w$ in the upper-left $n_1 \times n_1$ block so that the $GL(1)^{n_1}$ factor in the Levi is replaced by $GL(n_1)$.  Let $\rho_w'$ be the representation induced from $P'_w$ using the same data as $\rho_w$, except that one takes the representation $\mu_{1,w} \circ \det$ on the new Levi factor $GL(n_1,E_w)$.  Because $\rho_w'$ is a quotient of $\rho_w$, we have $\dim \pi_v^{K(\gn)_v} \le \dim \rho_w'^{\widetilde{K}(\gn)_w}$, and

\bes
\dim \rho_w'^{\widetilde{K}(\gn)_w} = [\widetilde{K}_w : \widetilde{K}_w \cap \widetilde{K}(\gn)_w P'_w] \dim \mu_{1,w}^{\widetilde{K}_1(\gn)_w} \prod_{i \ge 2} \left( \dim \mu_{i,w}^{\widetilde{K}_{i}(\gn)_w} \right)^{n_i}.
\ees
The result follows as before, after calculating $[\widetilde{K}_w : \widetilde{K}_w \cap \widetilde{K}(\gn)_w P'_w]$.

In the cases $m_1 = 2, 3$, we bound all but one of the factors of $\dim \pi_{1,v}^{K_{1}(\gn)_v}$ in (\ref{sunramified1}) using the representation theory of $GL(m_1,F_v)$.  When $m_1 = 2$, we have
\bes
\dim \pi_{1,v}^{K_{1}(\gn)_v} \le (1 + 1/q_v) N\gn_v.
\ees
Indeed, the case where $\pi_{1,v}$ is an induced representation or a twist of Steinberg is immediate, while the supercuspidal case is Proposition \ref{GL2supercusp} (since $v$ is coprime to $2$ by our assumption that $S$ contains all places above 2).  Applying this in (\ref{sunramified1}) gives

\[
\dim \pi_v^{K(\gn)_v} \le (1 + 1/q_v)^{\sigma(\cS) + n_1 - 1} (1 + O(q_v^{-2}) ) N\gn_v^{\tau(\cS) + n_1 - 1} \dim \pi_{1,v}^{K_1(\gn)_v} \prod_{i\ge 2} \left( \dim \pi_{i,v}^{K_{i}(\gn)_v} \right)^{n_i},
\]
and as $\tau_2(\cS) = \tau(\cS) + n_1 - 1$ and $\sigma_2(\cS) = \sigma(\cS) + n_1 - 1$, this gives (\ref{sunramified2}) in this case.

When $m_1 = 3$, applying Corollary \ref{corinvdim} in (\ref{sunramified1}) gives

\[
\dim \pi_v^{K(\gn)_v} \le C(\epsilon, q_v) (1 + 1/q_v)^{\sigma(\cS)} (1 + O(q_v^{-2}) ) N\gn_v^{\tau(\cS) + 4(n_1 - 1) + \epsilon} \dim \pi_{1,v}^{K_1(\gn)_v} \prod_{i\ge 2} \left( \dim \pi_{i,v}^{K_{i}(\gn)_v} \right)^{n_i},
\]
which again gives (\ref{sunramified2}).

\end{proof}


\subsection{$v$ split in $E/F$, $v \in S$}

In this case, $G^*_v \simeq GL(N,F_v) \simeq GL(N,E_w)$ and $G_v$ is an inner form of $GL(N,F_v)$.  It is known (see \cite[Theorem 1.6.4]{KMSW} for instance) that the packet $\Pi_{\psi_v}(G^*)$ contains exactly one element whereas $\Pi_{\psi_v}(G, \xi)$ has one or zero elements.

For $v \in S$, the constant $C_v$ can be arbitrary.  This means that to prove (\ref{factorineq}), we need to know that the left hand side is bounded independently of $\psi_v$, and that if it is nonzero, then the right hand side is also nonzero.  Both facts are provided by the following local lemma, where we consider $\psi_v\in \Psi(G^*_v)$ and bounded $\phi_{i,v}\in \Phi(GL(m_i,F_v))$ with $\psi_v=\oplus_{i=1}^k \phi_{i,v}\boxtimes \nu(n_i)$. The unique representations in $\Pi_{\psi_v}(G, \xi)$ and $\Pi_{\phi_{i,v}}(U(m_i)_v)= \Pi_{\phi_{i,v}}(GL(m_i, F_v))$ are denoted by $\pi_v$ and $\pi_{i,v}$, respectively.

\begin{lemma}
\label{sramified}

There is $C(K_v) > 0$ such that $\dim \pi_v^{K_v} \le C(K_v)$.  For each $i$ there exists an open compact subgroup $K_{i,v} \subset U(m_i,F_v)$ depending only on $K_v$ such that the following is true for every $\psi_v$ and $\phi_{i,v}$ as above: if $\pi_v^{K_v} \neq 0$, then $\pi_{i,v}^{K_{i,v}} \neq 0$ for all $i$.

\end{lemma}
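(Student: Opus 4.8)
The plan is to reduce both assertions to purely local, finiteness-type statements about the groups $GL(N,F_v)$ (and its inner form $G_v$) and $GL(m_i,F_v)$. First I would dispose of the uniform bound $\dim \pi_v^{K_v} \le C(K_v)$. Since $\psi_v = \oplus_i \phi_{i,v}\boxtimes\nu(n_i)$ with each $\phi_{i,v}$ bounded, the representation $\rho_{\psi_v}$ is parabolically induced from $P(F_v)$ from a unitary (tempered, in fact) representation of the Levi $\prod_i GL(m_i,F_v)^{n_i}$, and $\pi_v$ is a subquotient. By the Jacquet–Langlands transfer between $G_v$ and $GL(N,F_v)$, it suffices to bound $\dim (\pi_v')^{K_v'}$ for the corresponding representation $\pi_v'$ of $GL(N,F_v)$ and some fixed compact open $K_v'$. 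Now the key point is that the infinitesimal character / cuspidal support of $\pi_v'$ ranges over a set that, once we impose the boundedness of the $\phi_{i,v}$ and fix the shape $\cS$, is \emph{compact} (the unitary tempered duals of the $GL(m_i,F_v)$ intersected with the relevant central-character constraints form a compact set), while the function $\pi_v' \mapsto \dim (\pi_v')^{K_v'}$ is upper semicontinuous and locally bounded on the Bernstein variety. Hence it is bounded by a constant depending only on $K_v'$, equivalently on $K_v$. (Alternatively, and perhaps more cleanly, one bounds $\dim \rho_{\psi_v}^{\widetilde K(\gn)_w}$ directly: this equals an index $[\,\widetilde K : \widetilde K \cap \widetilde K(\gn) P_w\,]$ times $\prod \dim \mu_{i,w}^{\ast}$, and for $v\in S$, with $\gn$ coprime to $v$, the index is a fixed integer and each $\dim \mu_{i,w}^{\ast}$ is bounded over bounded $\mu_{i,w}$ by the same semicontinuity argument applied to $GL(m_i)$.)

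For the second assertion I would argue by a descent/transfer along cuspidal support. The representation $\pi_{i,v}$ of $GL(m_i,F_v)$ is, up to the fixed normalization in \eqref{langlandsquot}, one of the "building blocks" whose concatenation produces the cuspidal support of $\pi_v$; more precisely, if $\pi_v^{K_v}\neq 0$ then $\pi_v$ is a nonzero subquotient of $\rho_{\psi_v}$, so some irreducible subquotient of $\rho_{\psi_v}$ has a $K_v$-fixed vector, and by Frobenius reciprocity the Jacquet module of that subquotient along $P$ is nonzero. This forces each tensor factor $\pi_{\phi_{i,v}}|\det|^{\bullet}$ of the inducing representation — and hence $\pi_{i,v}$ itself, up to unramified twist — to be "not too ramified": its conductor is controlled in terms of the conductor of $K_v$. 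The cleanest way to phrase this: define $K_{i,v}$ to be the principal congruence subgroup of $GL(m_i,F_v)$ of level $\gP_v^{c}$, where $c$ is chosen large enough (depending only on $K_v$) that every irreducible admissible representation of $GL(m_i,F_v)$ occurring in the cuspidal support of any $\pi_v$ with $\pi_v^{K_v}\neq 0$ has nonzero $K_{i,v}$-fixed vectors. Such a $c$ exists because there are only finitely many Bernstein components of $GL(N,F_v)$ meeting $\{\pi_v : \pi_v^{K_v}\neq 0\}$ (that set being contained in finitely many components, each containing finitely many representations with $K_v$-fixed vectors up to unramified twist), and each component's cuspidal support involves only finitely many supercuspidal supports on the $GL(m_i,F_v)$ up to unramified twist — and unramified twists do not change the conductor.

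The main obstacle I anticipate is bookkeeping around the normalization twists $|\det(m_i)|^{(n_i-1)/2}, \ldots$ in \eqref{langlandsquot}: one must check that passing from $\pi_v$ back to the un-normalized $\phi_{i,v}$ (equivalently, to $\pi_{i,v}$ as defined on $U(m_i)_v = GL(m_i,F_v)$) does not introduce ramification, but since these twists are by \emph{unramified} characters $|\det|^{s}$ this is harmless — the conductor is twist-invariant under unramified twists. A secondary subtlety is that $G_v$ is an inner form of $GL(N,F_v)$, not $GL(N,F_v)$ itself, so the induced-representation picture is not literally available on $G_v$; here one invokes the Jacquet–Langlands correspondence (as already used implicitly via \cite{KMSW}) to transport the statement "$\pi_v$ has a $K_v$-fixed vector" to a statement about cuspidal support on the split form, and then runs the conductor argument there. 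None of this requires the endoscopic classification — only the local Langlands correspondence for $GL$, Bernstein decomposition, and Jacquet–Langlands — so the lemma is genuinely elementary given those inputs; the only real content is the choice of $K_{i,v}$, which is forced to be uniform precisely because finitely many Bernstein components are involved.
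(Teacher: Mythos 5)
Your overall architecture for the second claim matches the paper's: get from the inner form $G_v$ to the split group $GL(N,F_v)$, deduce that the induced representation $\rho_{\psi_v}$ has invariants under some compact open subgroup determined by $K_v$, and then descend to the inducing data via the (finite) double coset decomposition of $P\backslash G/K$ — your Frobenius reciprocity/Jacquet module phrasing and the remark that the unramified twists $|\det|^{s}$ are harmless are both fine. The genuine gap is the step you wave at with ``invoke Jacquet--Langlands'': the correspondence by itself matches representations and characters, and does \emph{not} transport the condition ``$\pi_v^{K_v}\neq 0$'' to ``$\pi_{\psi_v}$ has nonzero invariants under some $K_v'$ depending only on $K_v$.'' You need a concrete mechanism. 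The paper's is: $\pi_v^{K_v}\neq 0$ means $1_{K_v}(\pi_v)\neq 0$; transfer the test function $1_{K_v}$ to a function $1_{K_v}^*$ on $G_v^*$ and apply the character identity of \cite[Thm 1.6.4 (1)]{KMSW} to get $1_{K_v}^*(\pi_{\psi_v})\neq 0$; since $1_{K_v}^*$ is bi-invariant under some compact open $K_v'$ (depending only on $K_v$), this forces $\pi_{\psi_v}^{K_v'}\neq 0$ and hence $\rho_{\psi_v}^{K_v'}\neq 0$. An alternative filling would be depth preservation under Jacquet--Langlands for inner forms of $GL(N)$, but you would have to cite that; as written, the transfer of the fixed-vector condition is asserted rather than proved, and it is the only nontrivial point in the whole lemma.

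For the first claim the paper simply cites Bernstein's uniform admissibility theorem \cite{Be}, which applies directly to the inner form $G_v$ and gives $\dim\pi_v^{K_v}\le C(K_v)$ for \emph{all} irreducible admissible $\pi_v$ with no transfer to $GL(N,F_v)$ needed. Your compactness-plus-upper-semicontinuity argument on the Bernstein variety is essentially a sketch of a proof of that theorem, but you route it through $GL(N,F_v)$ via the same unjustified Jacquet--Langlands reduction, and your parenthetical alternative (bounding $\dim\rho_{\psi_v}^{\widetilde K(\gn)_w}$) again presupposes that $\pi_v$ is a subquotient of $\rho_{\psi_v}$, which fails when $G_v$ is a nonsplit inner form. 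Dropping the reduction and applying uniform admissibility (or your Bernstein-component finiteness argument) directly to $G_v$ fixes this cleanly.
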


\begin{proof}

The first claim is Bernstein's uniform admissibility theorem \cite{Be}. (We need it just for unitary representations, but the proof there shows the theorem for irreducible admissible representations of general $p$-adic reductive groups.)

To prove the second claim, recall that $\psi_v$ gives rise to representations $\rho_{\psi_v}$ and $\pi_{\psi_v}$ of $G^*_v\simeq GL(N,F_v)$ as in Section \ref{notation}. So $\pi_{\psi_v}$ is an irreducible subquotient of $\rho_{\psi_v}$.

The hypothesis $\pi_v^{K_v} \neq 0$ means that $1_{K_v}(\pi_v) \neq 0$.  If we transfer $1_{K_v}$ to a function $1_{K_v}^*$ on $G^*_v$, we have the character identity $1_{K_v}^*(\pi_{\psi_v}) =e(G_v)a_{\psi_v} 1_{K_v}(\pi_v) \neq 0$ by Theorem 1.6.4 (1) of \cite{KMSW} with certain signs $e(G_v),a_{\psi_v}\in \{\pm1\}$.  If we let $K'_v \subset GL(N,F_v)$ be an open compact subgroup such that $1_{K_v}^*$ is bi-invariant under $K'_v$, this implies that $\pi_{\psi_v}^{K'_v} \neq 0$ and thus $\rho_{\psi_v}^{K'_v}\neq 0$.  This gives $\pi_{i,v}^{K_{i,v}} \neq 0$ for suitable $K_{i,v} \subset GL(m_i,F_v)$, which implies the claim. (To see this, one uses a description of invariant vectors in an induced representation under an open compact subgroup as in the first display of \cite[p.26]{B}, noting that the double coset $P\backslash G/K$ there is finite.)



\end{proof}

\subsection{$v$ nonsplit in $E/F$, $v \notin S$}

In this case, for each $\psi_v\in \Psi(G^*_v)$ we have $\psi^N_v=\eta_{\chi_{\kappa}}\circ \psi_v\in \Psi(\tilde G(N)) = \Psi(GL(N,E_w))$. This gives rise to representations $\pi_{\psi^N_v}$ and $\rho_{\psi^N_v}$ of $GL(N,E_w)$ as in Section \ref{notation}. Similarly $\phi_{i,v}\in \Phi(U(m_i)_v)$ gives a representation $\pi_{\phi_{i,v}^{m_i}}$ of $GL(m_i,E_w)$ for the parameter $\eta_{\chi_{\kappa_i}}\circ \phi_{i,v}$. If $\psi_v$ and $\phi_{i,v}$ arise from global data as at the start of Section \ref{singleparameter} then $\pi_{\phi_{i,v}^{m_i}}$ is nothing but $\mu_{i,w}$.

Inequality (\ref{factorineq}) in this case follows from the lemma below.

\begin{lemma} Consider $\psi_v\in \Psi(G^*_v)$ and $\phi_{i,v}\in \Phi(U(m_i)_v)$ as above such that $\psi^N_v = \oplus_{i\ge 1} \phi_{i,v}^{m_i} \boxtimes \nu(n_i)$. Then
\label{nsunramified}
we have

\be
\label{dimle1}
\sum_{\pi_v \in \Pi_{\psi_v}(G,\xi)} \dim \pi_v^{K_v} \le 1.
\ee
If equality holds, then

\be
\label{dimequality}
\sum_{\pi_{i,v} \in \Pi_{\phi_{i,v}}(U(m_i))} \dim \pi_{i,v}^{K_{i,v}} = 1
\ee
for all $i$.

\end{lemma}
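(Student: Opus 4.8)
The plan is to work at the fixed unramified nonsplit place $v$ and exploit that the group and all parameters are unramified there, so every packet in sight contains a unique unramified member (relative to the fixed hyperspecial $K_v$, resp.\ $K_{i,v}$), and the invariant dimensions are $0$ or $1$. First I would recall that $\psi_v$ and each $\phi_{i,v}$ are unramified $A$-parameters of $G^*_v$, resp.\ $U(m_i)_v$; since $G_v \simeq G^*_v$ is unramified at $v \notin S$, the local packet $\Pi_{\psi_v}(G,\xi)$ contains at most one $K_v$-unramified representation, and similarly each $\Pi_{\phi_{i,v}}(U(m_i))$ contains at most one $K_{i,v}$-unramified representation. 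Because every representation in these packets is irreducible (as noted in the footnote, from the Ramanujan bounds at cohomological parameters, or here simply because the parameters in question are unramified and the relevant local classification is available), the sum $\sum_{\pi_v} \dim \pi_v^{K_v}$ is exactly the number of $K_v$-unramified members of $\Pi_{\psi_v}(G,\xi)$, each contributing a $1$-dimensional space. This gives \eqref{dimle1} immediately.

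Next, to get \eqref{dimequality} under the assumption of equality in \eqref{dimle1}, the key is the compatibility of the endoscopic classification with unramified base change. The character identity defining $\Pi_{\psi_v}(G,\xi)$, applied to the unit $1_{K_v}$ of the Hecke algebra, transfers to the unit $1_{\widetilde K_w}$ of the Hecke algebra of $GL(N,E_w)$ under base change $\eta_{\chi_\kappa}$; by the fundamental lemma for the unit element and the standard unramified computation, $\pi_v^{K_v} \neq 0$ forces $\rho_{\psi^N_v}^{\widetilde K_w} \neq 0$, hence $\pi_{\psi^N_v}$ (the Langlands subquotient) has Satake parameter equal to the unramified parameter attached to $\psi^N_v = \oplus_i \phi^{m_i}_{i,v} \boxtimes \nu(n_i)$. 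Since $\rho_{\psi^N_v}$ is parabolically induced from the $\mu_{i,w}$'s twisted by the $\nu(n_i)$-shifts, its unramified constituent being nonzero is equivalent to each inducing datum being unramified, i.e.\ $\mu_{i,w}^{\widetilde K_{i,w}} \neq 0$, where $\widetilde K_{i,w} = GL(m_i, \cO_{E_w})$. Translating back through $\mu_{i,w} = \pi_{\phi^{m_i}_{i,v}}$ and the identification of $\Pi_{\phi_{i,v}}(U(m_i))$'s unramified member with the representation of $U(m_i)_v$ whose base change is $\mu_{i,w}$, we conclude $\pi_{i,v}^{K_{i,v}} \neq 0$; combined with the already-established upper bound $\le 1$, this yields \eqref{dimequality}.

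The main obstacle, and the point that needs the most care, is the bookkeeping of the base-change / Satake-parameter matching: one must verify that the unramified member of $\Pi_{\psi_v}(G,\xi)$ has Satake parameter matching that of $\psi_v$ under the $L$-embedding ${}^L U(N) \to {}^L G(N)$, and that this Satake parameter determines (and is determined by) the unramified data of the $\phi_{i,v}$. This is essentially the content of the unramified case of \cite[Theorem 3.2.1]{M} (or the unramified part of Theorem 1.6.4 of \cite{KMSW}), so I would invoke that directly rather than re-proving it; the only genuinely local input beyond the classification is the elementary fact that the unramified constituent of a parabolically induced representation from an unramified inducing datum is nonzero with multiplicity one in the $K$-fixed space, which follows from the Iwasawa decomposition $G = P K$ exactly as in the split computation of Lemma \ref{sunramified}. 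A secondary subtlety is that $\psi_v$ need only be known to lie in $\Psi^+(G^*_v)$ rather than $\Psi(G^*_v)$ in general, but since we only need the statement for unramified $\psi_v$ arising from global cohomological data, the Ramanujan bounds (Section \ref{Archcontrol}) guarantee $\psi_v \in \Psi(G^*_v)$ and irreducibility of packet members, so this causes no trouble.
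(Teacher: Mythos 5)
Your overall plan is the right one, and your treatment of \eqref{dimequality} is essentially the paper's (unramifiedness of $\psi^N_v$ forces each $\mu_{i,w}$ to be unramified, and unramified base-change compatibility plus positivity produces the spherical member of $\Pi_{\phi_{i,v}}(U(m_i))$). But your proof of \eqref{dimle1} begs the question. The claim that $\Pi_{\psi_v}(G,\xi)$ "contains at most one $K_v$-unramified representation, each contributing a $1$-dimensional space" is not an innocuous standard fact here: $\psi_v$ is a genuine $A$-parameter, and the members of an $A$-packet do \emph{not} share a common $L$-parameter, so the usual argument for tempered $L$-packets (two distinct spherical irreducibles have distinct Satake parameters, hence distinct $L$-parameters) does not apply. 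For non-tempered packets, the uniqueness of the spherical member is exactly the content of \eqref{dimle1}, and the paper proves it rather than citing it: one applies the endoscopic character identity to $1_{K_v}$ to write $\sum_{\pi_v}\dim\pi_v^{K_v}$ as a twisted trace $\tr(\widetilde\pi_{\psi^N_v}(1_{\widetilde K_v\rtimes\theta}))$ via the twisted fundamental lemma, and then bounds this by $\dim\pi_{\psi^N_v}^{\widetilde K_v}\le\dim\rho_{\psi^N_v}^{\widetilde K_v}\le 1$. This route has the additional virtue of not requiring irreducibility of the packet members, since the final bound is on the full induced representation of $GL(N,E_w)$.

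A second, related omission: if one does run the character-identity argument, the identity that computes the \emph{unsigned} sum $\sum_{\pi_v}\dim\pi_v^{K_v}$ (all coefficients $\langle\cdot,\pi\rangle$ equal to $+1$) is the one attached to the element $s_{\psi_v}$. When $s_{\psi_v}\in\{\pm1\}$ this is the identity relating $G$ to $G^*=U(N)$ and then to the twisted group $\widetilde G(N)$; when $s_{\psi_v}\notin\{\pm1\}$ one is forced to pass through the elliptic endoscopic group $U(a)\times U(b)$ determined by $s_{\psi_v}$ and run the argument on each factor. Your write-up has no trace of this case division, and it cannot be avoided once you replace the asserted "unique spherical member" fact by an actual computation. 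So the gap is concentrated in \eqref{dimle1}: either supply a precise reference establishing uniqueness of the spherical member of a non-tempered $A$-packet with respect to $K_v$, or carry out the two-case character-identity computation as the paper does.
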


\begin{proof}

Suppose first that $s_{\psi_v} \in \{\pm 1\}$. We have a hyperspecial subgroup $\widetilde{K}_v$ of $G(N)_v \simeq GL(N,E_w)$. The twisted fundamental lemma implies that the functions $1_{K_v}$ and $1_{\widetilde{K}_v \rtimes \theta}$ are related by transfer.

Applying the character identity for $U(N)$ (Theorem 3.2.1 (b) of \cite{M}) with $s = 1$ gives

\bes
1_{K_v}^{U(N)}(\psi_v) = \sum_{\pi_v \in \Pi_{\psi_v}(G,\xi)} \dim \pi_v^{K_v},
\ees
and combining this with the twisted character identity \cite[Theorem 3.2.1 (a)]{M} and the twisted fundamental lemma gives

\bes
\sum_{\pi_v \in \Pi_{\psi_v}(G,\xi)} \dim \pi_v^{K_v} = \tr( \widetilde{\pi}_{\psi^N_v}( 1_{\widetilde{K}_v \rtimes \theta}) ).
\ees
The twisted trace $\tr( \widetilde{\pi}_{\psi^N_v}( 1_{\widetilde{K}_v \rtimes \theta}) )$ is equal to the trace of $\widetilde{\pi}_{\psi^N_v}(\theta)$ on $\pi_{\psi^N_v}^{\widetilde{K}_v}$, so we have

\bes
\tr( \widetilde{\pi}_{\psi^N_v}( 1_{\widetilde{K}_v \rtimes \theta}) ) \le \dim \pi_{\psi^N_v}^{\widetilde{K}_v}.
\ees
Since $\pi_{\psi^N_v}$ is a subquotient of $\rho_{\psi^N_v}$, we have

\bes
\dim \pi_{\psi^N_v}^{\widetilde{K}_v} \le \dim \rho_{\psi^N_v}^{\widetilde{K}_v} \le 1
\ees
which gives (\ref{dimle1}).

If equality holds, then $\psi^N_v$ is unramified. So all $\phi_{i,v}$ are unramified as well. Applying  \cite[Theorem 3.2.1 (b)]{M} to the parameter $\phi_{i,v}$ and the function $1_{K_{i,v}}$ for $U(m_i)$ gives

\bes
\sum_{\pi_{i,v} \in \Pi_{\phi_{i,v}}(U(m_i))} \dim \pi_{i,v}^{K_{i,v}} = 1_{K_{i,v}}^{U(m_i)}(\phi_{i,v}).
\ees
If $\widetilde \pi_{\phi_{i,v}^{m_i}}$ is the canonical extension of $\pi_{\phi_{i,v}^{m_i}}$ to $\widetilde{G}(m_i)_v$ (via Whittaker normalization),

\bes
\sum_{\pi_{i,v} \in \Pi_{\phi_{i,v}}(U(m_i))} \dim \pi_{i,v}^{K_{i,v}} = \tr( \widetilde \pi_{\phi_{i,v}^{m_i}}( 1_{\widetilde{K}_{i,v} \rtimes \theta}) ).
\ees
$\tr( \widetilde \pi_{\phi_{i,v}^{m_i}}( 1_{\widetilde{K}_{i,v} \rtimes \theta}) )$ is the trace of $\theta$ on the one-dimensional space $\pi_{\phi_{i,v}^{m_i}}^{\widetilde{K}_{i,v} }$, so we have $\tr( \widetilde \pi_{\phi_{i,v}^{m_i}}( 1_{\widetilde{K}_{i,v} \rtimes \theta}) ) = \pm 1$, and (\ref{dimequality}) follows from positivity.

Now suppose that $s_{\psi_v} \notin \{\pm 1\}$, and let $(G^\e, s^\e, \eta^\e)$ be the elliptic endoscopic triple for $G$ with $s^\e = s_{\psi_v}$.  We have $G^\e = U(a) \times U(b)$ for some $a, b > 0$.  There is an Arthur parameter $\psi^\e$ for $G^\e$ such that $\eta^\e \circ \psi^\e = \psi$, which we may factorise as $\psi^\e = \psi_1 \times \psi_2$.  We let $K^\e_v \subset G^\e(F_v)$ be a hyperspecial subgroup, and let $1^\e_{K_v}$ be the characteristic function of $K^\e_v$.  The Fundamental Lemma implies that $1_{K_v} \in \cH(G_v)$ and $1^\e_{K_v} \in \cH(G^\e_v)$ have $\Delta[\e, \xi, z]$-matching orbital integrals.  Applying \cite[Theorem 3.2.1 (b)]{M} with $s = s_{\psi_v}$ gives

\bes
\sum_{\pi_v \in \Pi_{\psi_v}(G,\xi)} 1_{K_v}(\pi_v) = 1^\e_{K_v}(\psi_v^\e) = 1_{K_{1,v}}(\psi_{1,v}) 1_{K_{2,v}}(\psi_{2,v}).
\ees
The result now follows by applying the result in the case $s_{\psi_v} \in \{\pm 1\}$ to the groups $U(a)$ and $U(b)$.

\end{proof}

\subsection{$v$ nonsplit in $E/F$, $v \in S$} Here we prove a result analogous to Lemma \ref{sramified}.

\begin{lemma}
\label{nsramified}

There exist open compact subgroups $K_{i,v} \subset U(m_i)_v$ depending only on $K_v$ such that the following holds: given $\psi_v\in \Psi(G_v)$ and $\phi_{i,v}\in \Phi(U(m_i)_v)$ such that $\psi^N_v=\oplus_{i=1}^k \phi^N_{i,v}\boxtimes \nu(n_i)$ (thus $\phi_{i,v}$ are bounded), if

\bes
\sum_{\pi_v \in \Pi_{\psi_v}(G,\xi)} \dim \pi_v^{K_v} \neq 0 \quad \text{then} \quad \sum_{\pi_{i,v} \in \Pi_{\phi_{i,v}}(U(m_i)) } \dim \pi_{i,v}^{K_{i,v}} \neq 0.
\ees
Moreover there is a constant $C(K_v) > 0$ which is independent of $\psi_v$ such that
\bes
\sum_{\pi_v \in \Pi_{\psi_v}(G,\xi)} \dim \pi_v^{K_v} \le C(K_v).
\ees

\end{lemma}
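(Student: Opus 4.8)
The plan is to treat the two assertions separately, and to mimic the strategy of Lemma \ref{sramified} while accounting for the fact that at nonsplit places the relevant packets are no longer singletons. The \emph{bound} is the easy half: $\Pi_{\psi_v}(G,\xi)$ is finite of cardinality at most $2^N$, since it is (a subset of a torsor under) the character group of the finite component group $S_{\psi_v}$, whose order divides $2^N$ because $\psi^N_v$ has at most $N$ irreducible constituents; and each member of the packet has finite length, so $\dim\pi_v^{K_v}$ is bounded in terms of $K_v$ alone by Bernstein's uniform admissibility theorem \cite{Be}. Multiplying these bounds gives the desired $C(K_v)$.

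For the \emph{non-vanishing} assertion I would propagate the existence of a $K_v$-fixed vector across two transfers --- first to the quasi-split form $U(N)$ and its elliptic endoscopic groups $U(a)\times U(b)$ with $a+b=N$, then across twisted base change to $GL(N,E_w)$, where $\psi^N_v$ corresponds to the explicit induced representation $\rho_{\psi^N_v}$ built out of (unramified twists of) the representations $\mu_{i,w}=\pi_{\phi^{m_i}_{i,v}}$ --- and finally descend to each $U(m_i)$. Since the $\dim\pi_v^{K_v}$ are non-negative, the hypothesis yields $\pi^0_v\in\Pi_{\psi_v}(G,\xi)$ with $(\pi^0_v)^{K_v}\neq0$. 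Fix, once and for all given $K_v$, a function on each of the finitely many endoscopic groups $G^\e\in\{U(N),\,U(a)\times U(b)\}$ whose orbital integrals $\Delta[\e,\xi,z]$-match those of $1_{K_v}$ --- only transfer of these particular functions is needed, not the fundamental lemma, which is unavailable for $v\in S$. Using the inner-form analogue, due to \cite{KMSW} (Theorem 1.6.4), of the endoscopic character identities of \cite[Theorem 3.2.1]{M}, the number $(\pi^0_v)^{K_v}$ becomes, after Fourier inversion over $S_{\psi_v}$, a linear combination of the values at those transfers of the stable distributions attached to the parameters $\psi^\e_v$ on the $G^\e$; hence one such value is non-zero. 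Writing $\psi^\e_v=\psi_{1,v}\times\psi_{2,v}$ on $U(a)\times U(b)$ (the two factors inheriting a partition of the blocks of $\psi^N_v$; the case $G^\e=U(N)$ is degenerate) and expanding the corresponding transfer as a finite sum of pure tensors, we obtain $S\Theta^{U(a)}_{\psi_{1,v}}(h_1)\neq0$ and $S\Theta^{U(b)}_{\psi_{2,v}}(h_2)\neq0$ for fixed functions $h_1,h_2$.

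Now apply stable base change \cite[Theorem 3.2.1 (a)]{M} on each factor: $S\Theta^{U(a)}_{\psi_{1,v}}$ is the twisted transfer of $\widetilde\Theta_{\psi^a_{1,v}}=\mathrm{tr}\,\widetilde\pi_{\psi^a_{1,v}}$ on $GL(a,E_w)\rtimes\theta$, so $\mathrm{tr}\,\widetilde\pi_{\psi^a_{1,v}}(\widetilde h_1)\neq0$ for a fixed $\widetilde h_1\in C^\infty_c(GL(a,E_w)\rtimes\theta)$ matching $h_1$ (its existence uses surjectivity of the twisted transfer, a standard feature of $GL$-base change). Taking $\widetilde h_1$ invariant under left and right translation by a fixed $\theta$-stable open compact $\widetilde K$ and noting that $\widetilde\pi_{\psi^a_{1,v}}(\widetilde h_1)$ then factors through the finite-dimensional space $\pi^{\widetilde K}_{\psi^a_{1,v}}$, we get $\pi^{\widetilde K}_{\psi^a_{1,v}}\neq0$, hence $\rho^{\widetilde K}_{\psi^a_{1,v}}\neq0$ since $\pi_{\psi^a_{1,v}}$ is a subquotient of $\rho_{\psi^a_{1,v}}$. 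The description of $\widetilde K$-fixed vectors in a parabolically induced representation via the finite double coset space --- exactly as in the proof of Lemma \ref{sramified}, via \cite[p.26]{B} --- then shows that each tensor factor $\mu_{i,w}=\pi_{\phi^{m_i}_{i,v}}$ occurring in $\rho_{\psi^a_{1,v}}$ has non-zero vectors fixed by an open compact $\widetilde K_{i,w}\subset GL(m_i,E_w)$ depending only on $K_v$; running this on both factors $U(a)$ and $U(b)$ covers all $i$. Finally, the resulting bound on the depth of $\mu_{i,w}$, combined with the fact that $GL$-base change and the local Langlands correspondence for $U(m_i)$ change depth by a bounded amount (controlled by the ramification of $E/F$ at $v$, hence absolute), bounds the depth of every member of $\Pi_{\phi_{i,v}}(U(m_i)_v)$ in terms of $K_v$ alone; choosing $K_{i,v}$ (which for $v\in S$ we are free to specify here) to be a sufficiently deep principal congruence subgroup gives $\pi_{i,v}^{K_{i,v}}\neq0$ for some $\pi_{i,v}\in\Pi_{\phi_{i,v}}(U(m_i))$, which is the assertion.

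The main obstacle is the presence of the signs $\langle s\,s_{\psi_v},\pi_v\rangle$ in the endoscopic character identities: since the hypothesis concerns a sum of non-negative dimensions rather than the value of a stable distribution, one cannot simply evaluate the stable character of $\psi_v$ at a transfer of $1_{K_v}$ and read off non-vanishing, which is what forces the Fourier inversion over $S_{\psi_v}$ and the descent to the smaller unitary factors $U(a)\times U(b)$, in the spirit of the $s_{\psi_v}\notin\{\pm1\}$ case of Lemma \ref{nsunramified}. A secondary, more bookkeeping-heavy point is to keep every auxiliary open compact subgroup independent of $\psi_v$: on the way up this is ensured by the finiteness of the set of endoscopic data and of the tensor decompositions of the once-and-for-all chosen transferred functions, while on the way back down it relies on the depth-preservation properties of base change and of the local classification for unitary groups, which is the least routine input.
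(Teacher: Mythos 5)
Your overall architecture (transfer to the endoscopic groups $U(a)\times U(b)$ or to $G^*$, pass through twisted base change to $GL(N,E_w)$, read off fixed vectors of the $\mu_{i,w}$ from the induced representation, descend to $U(m_i)$) is the same as the paper's, and your treatment of the uniform bound via finiteness of the packet plus Bernstein is fine (if anything slightly cleaner than the paper's route through the twisted trace and $\|\widetilde 1_{K_v}\|_1$). But your stated ``main obstacle'' is a misconception that costs you. You claim one \emph{cannot} directly evaluate at a transfer of $1_{K_v}$ because of the signs $\langle s s_{\psi_v},\pi_v\rangle$, and that this forces Fourier inversion over $S_{\psi_v}$. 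In fact the paper's key observation is that choosing $s^{\e}=s_{\psi_v}$ in \cite[Thm 1.6.1 (4)]{KMSW} makes every coefficient $\langle \pi,1\rangle$ equal to $1$, so the hypothesis $\sum_{\pi_v}\dim\pi_v^{K_v}\neq 0$ is \emph{literally} the nonvanishing of $1^{\e}_{K_v}(\psi_v^{\e})$ for the endoscopic datum attached to $s_{\psi_v}$ (which is $G^*$ itself when $s_{\psi_v}=\pm1$, and $U(a)\times U(b)$ otherwise). Your Fourier-inversion detour is not wrong in principle, but it is unnecessary, and it introduces a real complication you gloss over: the endoscopic element $s$ it selects lives in the full local centralizer and need not respect the global blocks $\phi_{i,v}^{m_i}\boxtimes\nu(n_i)$, only the irreducible constituents of $\psi_v^N$, so the ``partition of the blocks'' you assert for $\psi^\e=\psi_1\times\psi_2$ requires an extra reassembly step.

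The genuine gap is the last descent, from $\mu_{i,w}^{\widetilde K_{i,w}}\neq 0$ on $GL(m_i,E_w)$ back to $\sum_{\pi_{i,v}}\dim\pi_{i,v}^{K_{i,v}}\neq 0$ on $U(m_i)_v$ with $K_{i,v}$ independent of the parameter. You invoke ``depth-preservation properties of base change and of the local classification for unitary groups.'' No such result is available for the packets defined via \cite{M} and \cite{KMSW} in the generality needed here, and it is precisely to avoid this that the paper proves Lemma \ref{cpctdescent} by a soft compactness argument: the tempered, conjugate self-dual $\mu_{i,w}$ of bounded depth form a compact family (finitely many tuples of discrete series, each twisted by unitary unramified characters $|\cdot|^{is_j}$ ranging over a closed subset of a compact torus); for each point some $K_{i,v}$ works, the twisted trace $\mathrm{tr}\,\widetilde\pi^{m_i}_{\phi_{i,v}}(\widetilde 1_{K_{i,v}})$ is continuous in the $s_j$ by \cite{Ro}, so nonvanishing persists on an open neighborhood, and a finite subcover yields a single $K_{i,v}$. (Note also that even granting bounded depth of the packet members, one must still argue that bounded depth forces nonzero vectors under a fixed \emph{principal congruence} subgroup, which involves comparing Moy--Prasad filtrations at varying points of the building.) As written, your proof of the first assertion is therefore incomplete at exactly the step the paper isolates as Lemma \ref{cpctdescent}.
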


\begin{proof}

We begin with the first claim.  Suppose $s_{\psi_v} \in \{\pm 1\}$.  Let $1^*_{K_v}$ be the transfer of $1_{K_v}$ to $G^*_v$.  The character identity of \cite[Thm 1.6.1 (4)]{KMSW} gives

\bes
0 \neq e(G_v) \sum_{\pi_v \in \Pi_{\psi_v}(G,\xi)} \dim \pi_v^{K_v} = 1^*_{K_v}(\psi_v),
\ees
where $e(G_v) \in \{ \pm 1 \}$; note that the coefficients $\langle \pi, 1 \rangle$ appearing in the cited theorem are all 1 (where we take $s^{\mathfrak e}=s_{\psi_v}$).  Using the surjectivity result of Mok \cite[Prop. 3.1.1 (b)]{M}, there is a function $\widetilde{1}_{K_v}$ on $\widetilde{G}(N)_v$ whose twisted transfer to $G_v^*$ is $1^*_{K_v}$, and so we have $1^*_{K_v}(\psi_v) = \tr( \widetilde{\pi}_{\psi_v}( \widetilde{1}_{K_v}) )$.  Let $\widetilde{K}_v \subset G(N)_v$ be a compact open subgroup such that $\widetilde{1}_{K_v}$ is bi-invariant under $\widetilde{K}_v$.  It follows that we must have $\pi_{\psi_v}^{ \widetilde{K}_v } \neq 0$, and hence there are compact open $\widetilde{K}_{i,v} \subset G(m_i)_v$ depending only on $K_v$ such that $\pi_{\phi_{i,v}}^{ \widetilde{K}_{i,v} } \neq 0$.  The result now follows from Lemma \ref{cpctdescent} below.

Now suppose that $s_{\psi_v} \notin \{\pm 1\}$, and let $(G^\e, s^\e, \eta^\e)$ be the elliptic endoscopic triple for $G$ with $s^\e = s_{\psi_v}$ and so $G^\e = U(a) \times U(b)$ for some $a, b > 0$.  There is an Arthur parameter $\psi^\e$ for $G^\e$ such that $\eta^\e \circ \psi^\e = \psi$, which we may factorise as $\psi^\e = \psi_1 \times \psi_2$.  Let $1_{K_v}^\e$ be the function obtained by transferring $1_{K_v}$ to $G^\e_v$.  Applying the trace identity

\bes
e(G_v) \sum_{\pi_v \in \Pi_{\psi_v}(G,\xi)} \dim \pi_v^{K_v} = 1^\e_{K_v}(\psi_v^\e)
\ees
gives $1^\e_{K_v}(\psi_v^\e) \neq 0$.  Because $1^\e_{K_v}(\psi_v^\e)$ is equal to a sum of traces there is a compact open $K_{1,v} \times K_{2,v} \subset G^\e_v$ such that

\bes
1_{K_{1,v} \times K_{2,v}}(\psi^\e_v) = 1_{K_{1,v}}(\psi_{1,v}) 1_{K_{2,v}}(\psi_{2,v}) \neq 0
\ees
and the result now follows from the case $s_{\psi_v} \in \{\pm 1\}$ for the groups $U(a)$ and $U(b)$.

We now prove the second claim. Suppose $s_{\psi_v} \in \{\pm 1\}$. We again use the identity
$$e(G_v) \sum_{\pi_v \in \Pi_{\psi_v}(G,\xi)} \dim \pi_v^{K_v} = \tr(\widetilde{\pi}_{\psi_v}(\widetilde{1}_{K_v}) ),$$
 and let $\widetilde{K}_v \subset G(N)_v$ be a compact open subgroup such that $\widetilde{1}_{K_v}$ is bi-invariant under $\widetilde{K}_v$.  The trace $\tr(\widetilde{\pi}_{\psi_v}(\widetilde{1}_{K_v}) )$ is equal to the trace of $\widetilde{\pi}_{\psi_v}(\widetilde{1}_{K_v})$ on the space $\pi_{\psi_v}^{ \widetilde{K}_v}$, and the operator norm of $\widetilde{\pi}_{\psi_v}(\widetilde{1}_{K_v})$ is at most $\| \widetilde{1}_{K_v} \|_1 = C(K_v)$.  We therefore have $\left| \tr(\widetilde{\pi}_{\psi_v}(\widetilde{1}_{K_v}) ) \right|\le C(K_v) \dim  \pi_{\psi_v}^{\widetilde{K}_v}$, and the result follows as in Lemma \ref{sramified}.  If $s_{\psi_v} \notin \{\pm 1\}$, we reduce to the case of $U(a) \times U(b)$ as before.

\end{proof}

  Recall that $\eta_{\chi_{\kappa_i}}\circ \phi_{i,v}\in \Phi(G(m_i)_v)$ corresponds to $\mu_{i,w}$ via local Langlands under the isomorphism $G(m_i)_v \simeq GL(m_i,E_w)$, where $w$ is the unique place of $E$ above $v$.

\begin{lemma}
\label{cpctdescent}

If $\widetilde{K}_{i,w} \subset GL(m_i, E_w)$ is a compact open subgroup, then there is a compact open subgroup $K_{i,v} \subset U(m_i)_v$ with the following property: For any bounded parameter $\phi_{i,v}\in \Phi(U(m_i)_v)$ and the representation $\mu_{i,w}$ of $GL(m_i,E_w)$ corresponding as above, if $\mu_{i,w}^{\widetilde{K}_{i,w}} \neq 0$ then

\be\label{cpctdescenteq}
\sum_{\pi_{i,v} \in \Pi_{\phi_{i,v}}(U(m_i)) } \dim \pi_{i,v}^{K_{i,v}} \neq 0.
\ee

\end{lemma}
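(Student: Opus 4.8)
The plan is to reduce the statement to a bound on depths, using that depth of an irreducible representation of $GL(m_i,E_w)$ is detected by principal congruence subgroups (Moy--Prasad theory), that depth is matched by the local Langlands correspondence for $GL(m_i,E_w)$, and that the passage from a tempered parameter of $U(m_i)_v$ to its packet does not increase depth.

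First I would reduce to the case where $\widetilde K_{i,w}$ is a principal congruence subgroup. Any compact open $\widetilde K_{i,w}$ contains $\widetilde K'_{i,w}:=1+\p_{E_w}^n M_{m_i}(\cO_{E_w})$ for some $n\ge 1$, and if $\mu_{i,w}^{\widetilde K_{i,w}}\neq 0$ then $\mu_{i,w}^{\widetilde K'_{i,w}}\neq 0$ a fortiori. Hence it suffices to produce $K_{i,v}$ working for $\widetilde K'_{i,w}$; the same $K_{i,v}$ then works for $\widetilde K_{i,w}$. With this reduction, $\mu_{i,w}^{\widetilde K'_{i,w}}\neq 0$ forces the depth of $\mu_{i,w}$ to be bounded by a constant depending only on $n$, hence only on $\widetilde K_{i,w}$.

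Next I would push this bound through the parameters. By the local Langlands correspondence for $GL(m_i,E_w)$ and its compatibility with conductors (equivalently with depth), the depth of $\mu_{i,w}$ equals that of its parameter $\eta_{\chi_{\kappa_i}}\circ\phi_{i,v}$. The restriction of $\eta_{\chi_{\kappa_i}}\circ\phi_{i,v}$ to $W_{E_w}$ is the twist of $\phi_{i,v}|_{W_{E_w}}$ by the fixed character $\chi_{\kappa_i}$, and $E_w/F_v$ is a fixed finite extension; so the depth of $\phi_{i,v}$, regarded as an $L$-parameter of $U(m_i)_v$, is bounded by a constant $D$ depending only on $n$ and the fixed local data $(E_w/F_v,\chi_{\kappa_i})$, hence only on $K_v$. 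I would then descend to the packet: every $\pi_{i,v}\in\Pi_{\phi_{i,v}}(U(m_i))$ has depth $\le D$, so, fixing a hyperspecial integral model of $U(m_i)$ at $v$ and letting $K_{i,v}$ be the principal congruence subgroup of level $\p^{\lceil D\rceil+1}$, we get $\pi_{i,v}^{K_{i,v}}\neq 0$ for every member; since $\Pi_{\phi_{i,v}}(U(m_i))$ is nonempty by Mok's classification, this yields \eqref{cpctdescenteq}, and $K_{i,v}$ depends only on $D$, hence only on $K_v$.

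The main obstacle is the last step: that a bound on the depth of the parameter $\phi_{i,v}$ bounds the depths of the members of its packet on $U(m_i)$. In the generality stated this is a form of depth preservation for the local Langlands correspondence of quasi-split unitary groups; for our purposes one only needs the (easier) implication "parameter depth controls representation depth from above", which should follow either from the known depth preservation for classical groups, or from the endoscopic character identity of \cite[Theorem 3.2.1]{M} combined with the compatibility of twisted endoscopic transfer with Moy--Prasad congruence filtrations (the transfer factor being locally constant). One should also fix once and for all a notion of depth for $L$-parameters of $U(m_i)_v$ compatible with that for $GL(m_i,E_w)$-parameters under $\eta_{\chi_{\kappa_i}}$; working throughout with $\eta_{\chi_{\kappa_i}}\circ\phi_{i,v}$ and its restriction to $W_{E_w}$ makes this bookkeeping routine.
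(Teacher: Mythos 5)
Your route is genuinely different from the paper's, and it has a gap at precisely the step you flag as "the main obstacle." The paper does not use depth preservation at all: it fixes the square-integrable data, observes via Jacquet's theorem that a tempered $\mu_{i,w}$ of bounded level lies in one of finitely many families of full induced representations $\mu_1'|\cdot|^{is_1}\otimes\cdots\otimes\mu_k'|\cdot|^{is_k}$, shows that the conjugate self-dual locus in the unramified twisting parameters $(s_1,\ldots,s_k)$ is compact, and then uses the continuity of the Whittaker-normalized twisted trace $\tr(\widetilde{\pi}_{\phi_{i,v}}^{m_i}(\widetilde{1}_{K_{i,v}}))$ in the $s_j$ (twisted trace Paley--Wiener, \cite{Ro}) together with the character identity to propagate the nonvanishing $1_{K_{i,v}}(\phi_{i,v})\neq 0$ from a point to an open neighborhood; a finite subcover and an intersection of the resulting $K_{i,v}$ finish the proof. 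The only per-parameter input is the trivial fact that for a \emph{fixed} $\phi_{i,v}$ some $K_{i,v}$ works (the packet is nonempty and consists of smooth representations).

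The step you cannot currently justify is the claim that a bound on the depth of the parameter $\phi_{i,v}$ bounds the depths of all members of $\Pi_{\phi_{i,v}}(U(m_i))$. Even the one-sided implication you call "easier" is not a theorem in the generality needed here: depth preservation under the local Langlands correspondence for unitary groups is known only in special cases (and fails for some groups in small residue characteristic), and the packets here are the ones constructed by Mok via twisted endoscopy, for which no such depth comparison is established. Your fallback via the character identity does not close the gap either: the identity reads $\sum_{\pi}\langle\pi,1\rangle\,\tr\pi(f)=\tr(\widetilde{\pi}(\widetilde{f}))$ with $\widetilde{f}$ a twisted transfer of $f=1_{K_{i,v}}$, and to extract information about fixed vectors on $U(m_i)$ from the bounded depth of $\mu_{i,w}$ you would need both (a) control of the level of bi-invariance of $\widetilde{f}$ purely in terms of $K_{i,v}$ (this much is fine) and, crucially, (b) the \emph{nonvanishing} of the twisted trace for a $K_{i,v}$ depending only on the depth bound --- but that nonvanishing, uniformly in $\phi_{i,v}$, is exactly the content of the lemma. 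The paper's compactness-plus-continuity argument is what replaces this missing uniform nonvanishing statement; without it, or without an actual reference establishing the depth bound for these packets, your proof is incomplete.
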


\begin{proof}

The only nontrivial part of the lemma is the assertion that $K_{i,v}$ may be chosen independently of $\mu_{i,w}$. To this end, we will show that $\phi_{i,v}$ (or $\mu_{i,w}$) varies over a compact domain and that $K_{i,v}$ as in the lemma can be chosen in open neighborhoods. Then the proof will be complete by taking intersection of the finitely many $K_{i,v}$ for a finite open covering.

 By a theorem of Jacquet, our assumption that $\mu_{i,w}$ was tempered implies that $\mu_{i,w}$ belongs to a family of full induced representations from some
\be\label{inducingrep}
\mu_1' | \cdot |^{is_1} \otimes \cdots \otimes \mu_k' | \cdot |^{is_k}
\ee
with $\mu'_j$ square integrable and $s_j \in \R / (2\pi / \log q_w)\Z$.  Our assumption that $\mu_{i,w}$ had bounded depth implies that the set of tuples $\mu_1', \ldots, \mu_k'$ we must consider is finite, and so we only need to consider one.  We then need to show that the set of $s_j$ such that $\mu_{i,w}$ is conjugate self-dual (i.e. $\mu_{i,w}\simeq \mu_{i,w}\circ \theta$) is compact.  Because $\mu_j' | \cdot |^{is_j} \circ \theta = (\mu_j' \circ \theta) | \cdot |^{-is_j}$, this condition is equivalent to saying that the multisets $\{ \mu_j' | \cdot |^{is_j} \}$ and $\{ (\mu_j' \circ \theta) | \cdot |^{-is_j} \}$ are equivalent.  This in turn is equivalent to the existence of a permutation $\sigma \in S_k$ such that $\mu_j' | \cdot |^{is_j} \simeq (\mu_{\sigma(j)}' \circ \theta) | \cdot |^{-is_{\sigma(j)} }$ for all $j$.  For each $\sigma$ the set of $s_j$ satisfying this is closed, and hence the set of $s_j$ such that $\mu_{i,w}$ is conjugate self-dual is closed and compact.



  For fixed $s_1, \ldots, s_k$, there is some $K_{i,v}$ such that $1_{K_{i,v}}(\phi_{i,v}) \neq 0$, where $1_{K_{i,v}}(\phi_{i,v})$ is equal to the left hand side of \eqref{cpctdescenteq} by definition.  Also, if we transfer $1_{K_{i,v}}$ to $\widetilde{1}_{K_{i,v}}$ on $\widetilde{G}(m_i)$ using the surjectivity theorem of Mok \cite[Prop. 3.1.1 (b)]{M} then the character identity tells us that $1_{K_{i,v}}(\phi_{i,v})  = \tr( \widetilde{\pi}_{\phi_{i,v}}^{m_i}( \widetilde{1}_{K_{i,v}}) )$ (where the twisted trace is Whittaker normalized). The point is that $\tr( \widetilde{\pi}_{\phi_{i,v}}^{m_i}( \widetilde{1}_{K_{i,v}}) )$ varies continuously in the $s_j$ (see \cite{Ro}) so we still have $1_{K_{i,v}}(\phi_{i,v}) \neq 0$ around an open neighborhood of $s_1,...,s_k$ (where $\phi_{i,v}$ varies as $s_1,...,s_k$ vary). The result now follows by compactness.

\end{proof}



\section{Archimedean control on parameters}
\label{Archcontrol}


In this section, we prove some useful conditions on the parameters $\psi$ that contribute to the cohomology of $X(\gn)$.

Given $\phi_\infty=\otimes_{v|\infty} \phi_v \in \Phi(U(n)_\infty)$ for $n\ge 1$, note that the restriction of $\phi_v$ to $W_\C=\C^\times$ (for a fixed isomorphism $\overline F_v\simeq \C$), viewed as an $n$-dimensional representation via $\widehat{U(n)}=GL(n,\C)$, is a direct sum of $n$ characters $z\mapsto z^{a_{i,v}}\overline{z}^{b_{i,v}}$ with $a_{i,v},b_{i,v}\in \C$ and $a_{i,v}-b_{i,v}\in \Z$ for $i=1,...,n$. We say that $\phi_\infty$ is C-algebraic if $n$ is odd and all $a_{i,v}\in \Z$ or if $n$ is even and all $a_{i,v}\in \frac12+\Z$. We say $\phi_v$ is regular if $a_{i,v}$ are distinct. If $\pi_\infty$ is a member of the $L$-packet for $\phi_\infty$ then $\pi_\infty$ is said to be regular or C-algebraic if $\phi_\infty$ is. (This is Clozel's definition and coincides with the general definition \cite[Definition 2.3.3]{BG} for general reductive groups.)

Let $\cS = (n_1, m_1), \ldots, (n_k, m_k)$ be a shape as in Section \ref{singleparameter}. If $\psi  \in \Psi_2(G^*, \eta_{\chi_\kappa} )_\cS$ then $\psi$ gives rise to $\mu_i\in  \widetilde\Phi_{\mathrm{sim}}(m_i)$ and $\phi_i \in \Phi_{\mathrm{sim}}(U(m_i),\eta_{\chi_{\kappa_i}})$ as before.

\begin{lemma}\label{Ramanujan}
Let $\psi \in \Psi_2(G^*, \eta_{\chi_\kappa} )_\cS$.  If there is $\pi_\infty \in \Pi_{\psi_\infty}(G, \xi)$ with $h^d(\g, K_\infty; \pi_\infty) \neq 0$ then $\phi_{i,\infty}$ or $\phi_{i,\infty}\otimes \chi_{-,\infty}$ is C-algebraic. Moreover every $\phi_{i,v}$ is bounded at every place $v$ (equivalently $\mu_{i,v}$ is tempered at every place $v$).
\end{lemma}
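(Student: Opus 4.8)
The plan is to extract the archimedean constraint from the nonvanishing of $(\g,K_\infty)$-cohomology, then bootstrap it to every place using known cases of the Ramanujan conjecture for the $\mu_i$. First I would argue C-algebraicity and regularity at $\infty$. By Proposition \ref{cohreps}, if $h^d(\g,K_\infty;\pi_\infty)\neq 0$ then $\pi_{v_0}\cong \pi_{a,b}$ for some $a+b=d$, and at the compact real places $v\neq v_0$ the representation $\pi_v$ is finite-dimensional with the infinitesimal character of the trivial representation (since $A_\gq$ has that infinitesimal character at every real place). Part \eqref{piab2} of Proposition \ref{cohreps} records the restriction of the Langlands parameter of $\pi_{a,b}$ to $\C^\times$ explicitly: all its exponents $(z/\overline z)^{j/2}$ or $(z/\overline z)^{(b-a)/2}|z|^{\pm(N-d-1)}$ have half-integral difference $a_{i}-b_{i}$ of the expected parity, and more importantly the $a_i$ all lie in $\frac12\Z$ with parity determined by $N \bmod 2$ — which is exactly the C-algebraicity condition for $U(N)$, up to the twist by $\chi_{-,\infty}$ that shifts the parity by $1/2$. (The twist appears precisely because $\eta_{\chi_\kappa}$ differs from $\eta_{\chi_{-\kappa}}$ by $\chi_-$, so depending on the sign $\kappa_i$ one has C-algebraicity for $\phi_{i,\infty}$ itself or for $\phi_{i,\infty}\otimes\chi_{-,\infty}$.) Since $\psi_v^N=\eta_{\chi_\kappa}\circ\psi_v$ decomposes as $\boxplus_i \phi_{\mu_{i,v}}\boxtimes\nu(n_i)$, the archimedean parameter $\phi_{i,\infty}$ of each $\mu_i$ is a sub-parameter of the parameter of $\pi_{a,b}$ (twisted and shifted by the $\nu(n_i)$-factor), so the half-integrality/parity passes to each $\phi_{i,\infty}$; this gives the first assertion.

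Next I would upgrade regularity at $\infty$ to temperedness everywhere. The key observation is that because each $\phi_i \in \Phi_{\mathrm{sim}}(U(m_i),\eta_{\chi_{\kappa_i}})$ is a \emph{simple generic} parameter, $\mu_i=\mu_i^{m_i}\in\widetilde\Phi_{\mathrm{sim}}(m_i)$ is a conjugate self-dual \emph{cuspidal} automorphic representation of $GL(m_i,\A_E)$. One checks from the explicit list in Proposition \ref{cohreps}\eqref{piab2} that the exponents $a_i$ are moreover distinct once one accounts for the $\nu(n_i)$-shift — i.e. $\phi_{i,\infty}$ is regular (here one must be slightly careful in the excluded degenerate ranges, e.g. when $m_1\le 3$ or the factor $\nu(n_1)$ is long, but the multiplicities in the packet description force the relevant characters into distinct arithmetic progressions). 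A regular C-algebraic conjugate self-dual cuspidal automorphic representation of $GL(m_i,\A_E)$ is, by the work on the Ramanujan conjecture in this setting — it is cohomological after a finite base change / descends to a regular algebraic representation — known to be tempered at every finite place (Clozel, and for the archimedean places one uses purity of the associated Galois representation à la Caraiani, or directly that cohomological cuspidal representations of $GL_n$ over a CM field are tempered everywhere by the results quoted in, e.g., \cite{BMM}). Hence $\mu_{i,v}$ is tempered, equivalently $\phi_{i,v}$ is bounded, for every $v$.

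The main obstacle is the second paragraph: one genuinely needs the full strength of the known cases of the Ramanujan conjecture for conjugate self-dual cohomological cuspidal representations of $GL_n$ over $E$ — purely local or soft arguments will not give temperedness at the bad finite places. The archimedean input (Proposition \ref{cohreps}) only pins down $\phi_{i,\infty}$, and regularity there is what licenses the appeal to Ramanujan; so the delicate point is to verify that the degenerate shapes do not destroy regularity of $\phi_{i,\infty}$, after which the temperedness is a citation. I would organize the write-up so that the C-algebraicity claim is proved first and unconditionally from Proposition \ref{cohreps}\eqref{piab2}, the regularity of $\phi_{i,\infty}$ is checked case-by-case against the same list, and only then is the Ramanujan bound invoked to conclude global temperedness; this also has the virtue of justifying the footnote in Section \ref{application} that the local packets occurring in the cohomology consist of irreducible unitary representations.
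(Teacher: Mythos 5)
Your proposal is correct and follows essentially the same route as the paper: regular C-algebraicity of $\phi_{i,\infty}$ (up to a twist by $\chi_{-,\infty}$) is extracted from the archimedean constraint imposed by nonvanishing $(\g,K_\infty)$-cohomology, and temperedness at every place then follows from the known Ramanujan bounds for conjugate self-dual regular C-algebraic cuspidal representations of $GL(m_i,\A_E)$ (Caraiani at finite places, Clozel at infinite places). The one simplification worth noting is that your case-by-case worry about regularity of $\phi_{i,\infty}$ in degenerate shapes is unnecessary: since the infinitesimal character of $\pi_\infty$ equals that of the trivial representation, $\phi_{\psi_\infty}|_{W_\C}$ is regular, and because $\oplus_{j}\oplus_{l}\,\eta_{i,j}|\cdot|^{(n_i-1)/2-l}$ embeds in it, the characters $\eta_{i,j}$ constituting $\phi_{i,v}|_{W_\C}$ are automatically distinct.
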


\begin{proof}
   Since $\pi_\infty$ contributes to cohomology, its infinitesimal character is equal to that of the trivial representation. In particular it is regular C-algebraic, cf. \cite[Lemmas 7.2.2, 7.2.3]{BG}. Hence $\phi_{\psi_\infty}$ is regular C-algebraic. (Here we use the simple recipe to determine the infinitesimal character of $\pi_\infty$ from $\phi_{\psi_\infty}$ by differentiation, as described in \cite[Section 2.1]{MR}.) For each infinite place $v$, the representation $\phi_{i,v}|_{W_\C}$ is the direct sum of $m_i$ characters, say $\eta_{i,1}$, ..., $\eta_{i,m_i}$. Then $\oplus_{j=1}^{m_i} \oplus_{l=0}^{n_i-1} \eta_{i,j}|\cdot|^{\frac{n_i-1}{2}-l}$ appears as a subrepresentation of $\phi_{\psi_\infty}$. As the latter is regular C-algebraic, we see that for each $v|\infty$, $\phi_{i,v}|_{W_\C}$ is regular and that either $\phi_{i,v}|_{W_\C}$ or $\phi_{i,v}|_{W_\C}\otimes |\cdot|^{1/2}$ is C-algebraic, depending on the parity of $N-n_i$. It follows that $\phi_{i,\infty}|_{W_\C}$ or $\phi_{i,\infty}|_{W_\C}\otimes |\cdot|^{1/2}$ is regular C-algebraic.
    By the definition of $\chi_-$ in Section \ref{notation}, $\phi_{i,\infty}|_{W_\C}\otimes |\cdot|^{1/2}$ is C-algebraic if and only if $\phi_{i,\infty}|_{W_\C}\otimes \chi_{-,\infty}$ is.

 The key point is that $\mu_i$ or $\mu_i \otimes \chi_{-}$ is an automorphic representation with regular C-algebraic component at $\infty$ (recalling that $\mu_{i,\infty}$ lies in the packet for $\phi_{i,\infty}$). Both $\mu_i$ and $\mu_i \otimes \chi_{-}$ are cuspidal and conjugate self-dual, so either $\mu_i$ or $\mu_i \otimes \chi_{-}$ (whichever is C-algebraic at $\infty$) is essentially tempered at all finite places by \cite[Theorem 1.2]{Ca} (the cohomological condition in \emph{loc. cit.} follows from regular C-algebraicity, cf. \cite[Lemme 3.14]{Cl}) and at all infinite places by \cite[Lemme 4.9]{Cl}. In either case, twisting by $\chi_-$ if necessary, we deduce that $\mu_i$ is essentially tempered everywhere. Since the central character of $\mu_i$ is unitary, we see that $\mu_i$ is tempered everywhere. By the local Langlands correspondence \cite[Theorem 2.5.1 (b)]{M}, this is equivalent to $\phi_{i,v}$ being bounded at every $v$.
\end{proof}

\begin{lemma}
\label{RACSD}

For each $i$, there is a finite set of parameters $\mathcal P_{i,\infty} \subset {\Phi}(U(m_i)_\infty)$ with the following property:  If $\psi  \in \Psi_2(G^*, \eta_{\chi_\kappa} )_\cS$, and there exists $\pi_\infty \in \Pi_{\psi_\infty}(G, \xi)$ with $h^d(\g, K_\infty; \pi_\infty) \neq 0$, then $\phi_{i,\infty} \in \mathcal P_{i,\infty}$.

\end{lemma}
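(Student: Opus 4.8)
The plan is to deduce this from Lemma \ref{Ramanujan} together with the fact that cohomological representations of $U(N-1,1)$ at the distinguished real place, and trivial (hence bounded and determined by the infinitesimal character) representations at the compact real places, have infinitesimal character equal to that of the trivial representation, so there are only finitely many possibilities for $\psi_\infty$. Concretely, fix a real place $v$ of $F$. If $v \neq v_0$ then $G_v$ is compact, so $\Pi_{\psi_v}(G,\xi)$ consists of finite-dimensional representations and the condition $h^d(\g,K_\infty;\pi_\infty)\neq 0$ forces $\pi_v$ to have the infinitesimal character of the trivial representation; hence $\psi_v$, and therefore each $\phi_{i,v}$, is pinned down to a finite set. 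If $v = v_0$, then by Proposition \ref{cohreps} the representation $\pi_{v_0}$ is one of the finitely many $\pi_{a,b}$ with $a+b = d$, and each of these has a single explicit infinitesimal character (that of the trivial representation), so again $\psi_{v_0}$ lies in a finite set.

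The next step is to translate a bound on $\psi_\infty$ into a bound on each $\phi_{i,\infty}$. Since $\psi^N = \boxplus_i \mu_i \boxtimes \nu(n_i)$, the localization $\phi_{\psi_\infty}$ (obtained from $\psi_\infty$ by the standard twist) contains, for each $v\mid\infty$ and each $i$, the characters $\eta_{i,j}|\cdot|^{(n_i-1)/2 - l}$ with $0\le l\le n_i-1$, $1\le j\le m_i$, where $\{\eta_{i,j}\}$ is the restriction of $\phi_{i,v}$ to $W_\C$. The infinitesimal character of $\pi_\infty$ being regular and equal to that of the trivial representation means the exponents appearing in $\phi_{\psi_\infty}|_{W_\C}$ form a fixed regular set of half-integers (namely $\{(N-1)/2, (N-3)/2, \dots, -(N-1)/2\}$ up to the appropriate normalization, at each infinite place). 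Since these exponents are a fixed finite set, the characters $\eta_{i,j}$ — and hence $\phi_{i,v}$ itself — are constrained to a finite set of possibilities at each $v\mid\infty$. Taking the (finite) product over the archimedean places $v\mid\infty$ gives a finite set $\mathcal P_{i,\infty}\subset \Phi(U(m_i)_\infty)$ with the required property.

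The only real subtlety is bookkeeping: one must check that the combinatorial matching of exponents — which $\eta_{i,j}|\cdot|^{(n_i-1)/2-l}$ goes to which slot of the fixed multiset — leaves only finitely many choices for the unshifted characters $\eta_{i,j}$, and that Lemma \ref{Ramanujan} has already guaranteed these are unitary (bounded) so no continuous families of twists by $|\cdot|^{it}$ can sneak in. Boundedness from Lemma \ref{Ramanujan} is exactly what rules out such families: a bounded parameter is determined by its infinitesimal character up to finitely many choices. So the argument is: (i) cohomology $\Rightarrow$ fixed infinitesimal character of $\psi_\infty$; (ii) fixed infinitesimal character $+$ boundedness of $\phi_{i,\infty}$ (Lemma \ref{Ramanujan}) $\Rightarrow$ finitely many $\phi_{i,v}$ at each $v\mid\infty$; (iii) finitely many archimedean places $\Rightarrow$ finitely many $\phi_{i,\infty}$. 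I expect step (ii), the precise extraction of the finiteness of $\phi_{i,v}$ from the regular infinitesimal character of the induced-up parameter $\phi_{\psi_\infty}$, to be the part requiring the most care, though it is essentially the same differentiation-of-parameters computation already invoked in the proof of Lemma \ref{Ramanujan} via \cite[Section 2.1]{MR}.
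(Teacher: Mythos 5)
Your proposal is correct and follows essentially the same route as the paper: the cohomology condition fixes the infinitesimal character of $\pi_\infty$ (hence of $\psi_\infty$), forcing $\psi_\infty$ into a finite set, and since $\eta_{\chi_{\kappa_i}}\circ(\phi_{i,v}\boxtimes\nu(n_i))$ appears as a factor of $\eta_{\chi_\kappa}\circ\psi_v$ at each archimedean place, each $\phi_{i,\infty}$ is likewise confined to a finite set. The paper states this last implication as immediate, whereas you spell it out via the exponent bookkeeping (already implicit in the proof of Lemma \ref{Ramanujan}); that extra detail is harmless and the argument is sound.
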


\begin{proof}

The infinitesimal character of $\pi_\infty$ is determined by the condition that $h^d(\g, K_\infty; \pi_\infty) \neq 0$ (to be the half sum of all positive roots of $G$), thus there are finitely many such $\pi_\infty$. So they are contained in finitely many Arthur packets, whose parameters form a finite subset $\mathcal P\subset \Psi(U(N)_\infty)$. If $\psi$ gives rise to $\phi_i$ then $\eta_{\chi_{\kappa_i}}\circ(\phi_{i,v}\boxtimes \nu(n_i))$ should appear as a factor of $\eta_{\chi_\kappa} \circ \psi_v$ for every infinite place $v$. Since we have the constraint $\otimes_{v|\infty}\psi_v\in \mathcal P$, it is clear that there are finitely many possibilities for $\phi_{i,\infty}$.

%
%

\end{proof}

\section{Summing over parameters}\label{summing}

In this section we continue the proof of Theorem \ref{maintheorem} from the end of Section \ref{application} and finish the proof.
In the preliminary bound \eqref{firststep}, we will fix a shape $\cS$ and bound the contribution to $h^d_{(2)}(X(\gn))$ from parameters in $\Psi_2(G^*, \eta_{\chi_\kappa})_\cS$, which we denote by $h^d_{(2)}(X(\gn))_\cS$. Clearly it suffices to establish a bound for $h^d_{(2)}(X(\gn))_\cS$ as in Theorem \ref{maintheorem}.

 Suppose $\psi \in \Psi_2(G^*, \eta_{\chi_\kappa})_\cS$ has the property that there is $\pi \in \Pi_\psi(G, \xi)$ with $h^d(\g, K_\infty; \pi_\infty) \neq 0$.  Proposition \ref{cohreps} implies that $\pi_{v_0}$ must be a Langlands quotient of a standard representation with an exponent of the form $(z/\overline{z})^{p/2} (z \overline{z})^{(a-1)/2}$ for some $a \ge N-d$.  Proposition 13.2 of \cite{BMM} implies that there is $i$ such that $n_i \ge N-d$, and we assume that this is $n_1$.  Note that \cite[Prop 13.2]{BMM} implicitly assumes that the other archimedean components of $\pi$ have regular infinitesimal character, but this is satisfied in our case.

Apply Lemma \ref{RACSD} to obtain finite sets $\mathcal P_{i,\infty} \subset \Phi(U(m_i)_\infty)$ for all $i$ such that if $\psi \in \Psi_2(G^*,  \eta_{\chi_\kappa})_\cS$, and there is $\pi \in \Pi_\psi(G,\xi)$ with $h^d(\g, K_\infty; \pi_\infty) \neq 0$, then $\phi_{i,\infty} \in \mathcal P_{i,\infty}$.  Let $\Psi_\text{rel}$ be the set of $A$-parameters $\psi \in \Psi_2(G^*, \eta_{\chi_\kappa})_\cS$ with $\phi_i$ bounded everywhere and $\phi_{i,\infty} \in \mathcal P_{i,\infty}$ for all $i$. By Lemmas \ref{Ramanujan} and \ref{RACSD} we have

\bes
h^d_{(2)}(X(\gn))_\cS \le \sum_{\psi \in \Psi_\text{rel} } \sum_{\pi \in \Pi_\psi(G, \xi) } h^d(\g, K_\infty; \pi_\infty) \dim \pi_f^{K(\gn)},
\ees
and because $h^d(\g, K_\infty; \pi_\infty)$ is bounded we may simplify this to

\bes
h^d_{(2)}(X(\gn))_\cS \ll \sum_{\psi \in \Psi_\text{rel} } \sum_{\pi \in \Pi_\psi(G, \xi) } \dim \pi_f^{K(\gn)}.
\ees
Because $\phi_i$ is bounded everywhere for every $i$, we may apply Proposition \ref{singlefinite} to obtain

\bes
h^d_{(2)}(X(\gn))_\cS \ll \prod_{v | \gn} (1 + 1/q_v)^{\sigma(\cS)} N\gn^{\tau(\cS)} \sum_{\psi \in \Psi_\text{rel} } \prod_{i \ge 1}  \left( \sum_{ \pi_i \in \Pi_{\phi_i}(U(m_i)) } \dim \pi_{i,f}^{K_i(\gn)} \right)^{n_i}.
\ees
Let $\Phi_\text{sim}^\text{bdd}(U(m_i),\eta_{\chi_{\kappa_i}})$ denote the set of simple parameters that are bounded everywhere. Taking a sum over $\psi \in \Psi_\text{rel}$ corresponds to taking a sum over the possibilities for $\phi_i \in \Phi_\text{sim}^\text{bdd}(U(m_i),\eta_{\chi_{\kappa_i}})$ with $\phi_{i,\infty} \in \mathcal P_{i,\infty}$.  We may therefore factorize the sum over $\psi$ to ones over $\phi_i$, which gives

\begin{align}
\notag
h^d_{(2)}(X(\gn))_\cS & \ll \prod_{v | \gn} (1 + 1/q_v)^{\sigma(\cS)} N\gn^{\tau(\cS)} \prod_{i \ge 1} \sum_{ \substack{ \phi_i \in \Phi_\text{sim}^\text{bdd}(U(m_i),\eta_{\chi_{\kappa_i}}) \\ \phi_{i,\infty} \in \mathcal P_{i,\infty} } } \left(  \sum_{ \pi_i \in \Pi_{\phi_i}(U(m_i)) } \dim \pi_{i,f}^{K_i(\gn)} \right)^{n_i} \\
\label{factor2}
& \le \prod_{v | \gn} (1 + 1/q_v)^{\sigma(\cS)} N\gn^{\tau(\cS)} \prod_{i \ge 1}  \left( \sum_{ \substack{ \phi_i \in \Phi_\text{sim}^\text{bdd}(U(m_i),\eta_{\chi_{\kappa_i}}) \\ \phi_{i,\infty} \in \mathcal P_{i,\infty} } } \sum_{ \pi_i \in \Pi_{\phi_i}(U(m_i)) } \dim \pi_{i,f}^{K_i(\gn)} \right)^{n_i}.
\end{align}

We may bound the sums using the global limit multiplicity formula of Savin \cite{Sa}.  Indeed, because $\phi_i$ is a simple generic parameter, the packet $\Pi_{\phi_i}(U(m_i))$ is stable, so that every representation $\pi_i \in \Pi_{\phi_i}(U(m_i))$ occurs in the discrete spectrum of $U(m_i)$ with multiplicity one.  In fact, $\pi_i$ must actually lie in the cuspidal spectrum by \cite[Theorem 4.3]{Wa}, because $\pi_{i,\infty}$ is tempered.  Because the archimedean components of $\phi_i$ are restricted to finite sets, there is a finite set $\Pi_{i, \infty}$ of representations of $U(m_i)_\infty$ such that if $\pi_i \in \Pi_{\phi_i}(U(m_i))$ then $\pi_{i,\infty} \in \Pi_{i,\infty}$.  If $m_\text{cusp}(\pi_\infty, Y_i(\gn) )$ denotes the multiplicity with which an irreducible representation $\pi_\infty$ of $U(m_i)_\infty$ occurs in the $L^2$-space of cuspforms $L^2_\text{cusp}(Y_i(\gn))$, where $Y_i(\gn) = U(m_i,F) \backslash U(m_i,\A) / K_i(\gn)$, we have

\bes
\sum_{ \substack{ \phi_i \in \Phi_\text{sim}^\text{bdd}(U(m_i),\eta_{\chi_{\kappa_i}}) \\ \phi_{i,\infty} \in \mathcal P_{i,\infty} } } \sum_{ \pi_i \in \Pi_{\phi_i}(U(m_i)) } \dim \pi_{i,f}^{K_i(\gn)} \le \sum_{ \substack{ \pi_i \subset L^2_\text{cusp}([U(m_i)]) \\ \pi_{i,\infty} \in \Pi_{i,\infty} } } \dim \pi_{i,f}^{K_i(\gn)} = \sum_{\pi_\infty \in \Pi_{i,\infty} } m_\text{cusp}( \pi_\infty, Y_i(\gn) ).
\ees
For each $\pi_\infty$, Savin \cite{Sa} gives

\bes
m_\text{cusp}( \pi_\infty, Y_i(\gn) ) \ll [K_i : K_i(\gn) ] \ll \prod_{v|\gn} (1 - 1/q_v) N\gn^{m_i^2},
\ees
and combining this with (\ref{factor2}) gives

\bes
h^d_{(2)}(X(\gn))_\cS \ll \prod_{v|\gn} (1-1/q_v) N\gn^{\tau'(\cS)},
\ees
where $\tau'(\cS) = \tau(\cS) + \sum_{i \ge 1} n_i m_i^2$.  If $1\le m_1 \le 3$ then applying Proposition \ref{singlefinite} and working as above gives

\bes
h^d_{(2)}(X(\gn))_\cS \ll \prod_{v|\gn} (1+1/q_v)^{\sigma'_l(\cS)} N\gn^{\tau'_l(\cS)},
\ees
where $l = m_1$, $\tau'_l(\cS) = \tau_l(\cS) + m_1^2 + \sum_{i \ge 2} n_i m_i^2$, and $\sigma'_l(\cS) = \sigma_l(\cS) - 1 - \sum_{i \ge 2} n_i$.

The bounds for the functions $\tau'$ and $\tau'_j$ given by Lemma \ref{comb} below then imply that
\bes
h^d_{(2)}(X(\gn))_\cS \ll_\epsilon N\gn^{Nd + \epsilon},
\ees
unless we are in one of the two cases listed there.  In the exceptional case (\ref{equal2}) we have $\sigma'_2(\cS) = 1$ and hence $h^d_{(2)}(X(\gn))_\cS \ll \prod_{v|\gn} (1+1/q_v) N\gn^{Nd + 1}$, and in case (\ref{equal1}), which should give the general main term, we have

\bes
h^d_{(2)}(X(\gn))_\cS \ll \prod_{v|\gn} (1-1/q_v) N\gn^{Nd+1}.
\ees
This completes the proof of Theorem \ref{maintheorem}. $\Box$

\smallskip

  It remains to prove the lemma used in the above proof.

\begin{lemma}
\label{comb}

If $m_1 \ge 4$, we have $\tau'(\cS) \le Nd$.  If $m_1 = l$, $l = 1, 2, 3$, we have $\tau'_l(\cS) \le Nd + \delta_{3l} \epsilon$, except in the following cases where $\tau'_l(\cS) = Nd + 1$.

\begin{enumerate}

\item
\label{equal1} $\cS = (N-d,1), (1,d)$.

\item
\label{equal2} $\cS = (2,2)$ and $d = 2$.

\end{enumerate}

\end{lemma}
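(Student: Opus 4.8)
The plan is to put $\tau'(\cS)$ and the three variants $\tau'_l(\cS)$ into a common form that isolates the dependence on the first block, and then to reduce the whole statement to a handful of elementary one-variable inequalities.

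First I would record, directly from the definitions \eqref{taudef}, the relations $\tau'(\cS)=\tau(\cS)+\sum_i n_im_i^2$ and $\tau'_l(\cS)=\tau_l(\cS)+m_1^2+\sum_{i\ge2}n_im_i^2$, together with the identity $m^2-\binom{m}{2}=\binom{m+1}{2}$, that
\[
\tau'(\cS)=\binom{N}{2}+\sum_{i\ge1}n_i\binom{m_i+1}{2},\qquad
\tau'_l(\cS)=\binom{N}{2}+B_l(n_1)+\sum_{i\ge2}n_i\binom{m_i+1}{2},
\]
where $B_1(n)=1-\binom{n}{2}$, $B_2(n)=3$, $B_3(n)=n+5+\epsilon$. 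Writing $R:=N-n_1m_1=\sum_{i\ge2}n_im_i\ge0$, the key estimate is
\[
\sum_{i\ge2}n_i\binom{m_i+1}{2}\ \le\ \binom{R+1}{2},
\]
which holds because $m_i\le\sum_{j\ge2}n_jm_j=R$ for each $i\ge2$, hence $\sum_{i\ge2}n_im_i^2\le R\cdot R$, and $\sum_{i\ge2}n_i\binom{m_i+1}{2}=\tfrac12\big(\sum_{i\ge2}n_im_i^2+R\big)$; equality forces the shape $\cS=(n_1,m_1),(1,R)$ (when $R>0$). After this reduction everything is a function of $(n_1,m_1,R)$ subject to $n_1m_1+R=N$, $R\ge0$, and the standing inequality $n_1\ge N-d$ coming from \cite[Prop.~13.2]{BMM}.

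For $m_1\ge4$ I would set $c_1=n_1m_1$, so $4(N-d)\le c_1\le N$. Using $n_1\binom{m_1+1}{2}=\tfrac12(c_1m_1+c_1)$ and $m_1=c_1/n_1\le c_1/(N-d)$, the linear terms cancel against $\binom{N}{2}$ and $\binom{R+1}{2}$, leaving $\tau'(\cS)\le\tfrac12\big(N^2+g(c_1)\big)$ with $g(x)=x^2/(N-d)+(N-x)^2$. Since $g$ is convex its maximum over $[4(N-d),N]$ is attained at an endpoint, and a short check at $x=N$ and $x=4(N-d)$ — using only $N\ge4(N-d)$ and $N-d\ge2$ (the latter since $d<N-1$) — gives $g(c_1)\le 2Nd-N^2$, i.e.\ $\tau'(\cS)\le Nd$. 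For $m_1=1$ I would combine the key estimate with $\binom{N}{2}-\binom{n_1}{2}+\binom{N-n_1+1}{2}=N(N-n_1)$ to get $\tau'_1(\cS)\le N(N-n_1)+1\le Nd+1$, with equality only if $n_1=N-d$ and the key estimate is sharp, i.e.\ $\cS=(N-d,1),(1,d)$, where $\tau'_1(\cS)=Nd+1$ exactly. For $m_1=2,3$ the bound is monotone in $R$ along the constraint, so the extremal shape has $n_1=N-d$ and $R=N-l(N-d)$ (nonnegative exactly when an $m_1=l$ shape exists at all); substituting reduces $\tau'_2(\cS)\le Nd$ to $(N-d)(2d+1-N)\ge3$ and $\tau'_3(\cS)\le Nd+\epsilon$ to $(N-d)(9d+1-5N)\ge10$. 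With $e:=N-d\ge2$ these become $e(N-2e+1)\ge3$ and $e(4N-9e+1)\ge10$; using $N\ge2e$, resp.\ $N\ge3e$ (again the existence constraints), one finds $e(N-2e+1)\ge e\ge2$ with value exactly $2$ only when $e=2$, $N=4$, where $d=2$ and the unique such shape $\cS=(2,2)$ has $\tau'_2(\cS)=9=Nd+1$; and $e(4N-9e+1)\ge e(3e+1)\ge14>10$, so there is no exception for $l=3$. Assembling the four cases gives the lemma.

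The main obstacle is not a single deep step but the boundary case analysis. I expect the delicate points to be (a) verifying that the key estimate is sharp for \emph{only} the two shapes appearing in the statement, and (b) confirming that the elementary inequalities in the $l=1,2,3$ cases hold with no further exceptions — which forces one to use the (easy but essential) constraints $l(N-d)\le N$ governing when an $m_1=l$ shape can occur at all, so as to exclude spurious exceptional pairs $(N,d)$.
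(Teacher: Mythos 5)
Your proposal is correct and follows essentially the same route as the paper's proof: the same binomial rewriting of $\tau'$ and $\tau'_l$, the same key estimate $\sum_{i\ge2}n_i\binom{m_i+1}{2}\le\binom{N-n_1m_1+1}{2}$ with the same equality analysis pinning down the two exceptional shapes, and the same reduction to elementary inequalities in the four cases $m_1\ge4$ and $m_1=1,2,3$. The only cosmetic differences are that for $m_1\ge4$ you argue by convexity in $c_1=n_1m_1$ with endpoint checks where the paper manipulates the inequality directly in $(N,m_1,n_1)$, and that for $l=1,2,3$ you extremize at $n_1=N-d$ against the fixed target $Nd$ rather than proving the bound against the variable target $N(N-n_1)$ as the paper does --- for $l=3$ the monotonicity you assert does hold (the drop of $3(R-1)\ge 6$ in $\binom{R+1}{2}$ when $n_1$ increases within the admissible range dominates the gain of $1$ from the $n_1$ term), but that one-line check should be made explicit.
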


\begin{proof}

We begin with the case $m_1 \ge 4$.  The inequality $d \ge N - n_1$ implies that it suffices to prove $\tau'(\cS) \le N(N - n_1)$.  Substituting the definition of $\tau'$ and simplifying, we must show that

\be
\label{eliminated}
\binom{N}{2} + \sum_{i \ge 1} n_i \binom{m_i + 1}{2} \le N(N - n_1).
\ee
We next eliminate the variables other than $N$, $m_1$ and $n_1$.  The identity $\binom{n+1}{2} = 1 + \ldots + n$ implies that if $A = \sum a_i$, then $\binom{A+1}{2} \ge \sum \binom{a_i + 1}{2}$, and applying this to the $m_i$ with multiplicity $n_i$ for $i \ge 2$ gives

\be
\label{binom}
\sum_{i \ge 2} n_i \binom{m_i+1}{2} \le \binom{N - n_1 m_1 + 1}{2}.
\ee
Note that equality occurs above if and only if $\sum_{i \ge 2} n_i$ is either 0 or 1.  After applying this in (\ref{eliminated}), we are reduced to showing that

\bes
\binom{N}{2} + n_1 \binom{m_1+1}{2} + \binom{N - n_1 m_1 + 1}{2} \le N(N - n_1).
\ees
Simplifying gives

\begin{align*}
N(N-1) + n_1 (m_1+1) m_1 + (N - n_1 m_1 + 1)(N - n_1 m_1) & \le 2N(N-n_1)  \\
-2m_1 n_1 N + 2N n_1 + m_1^2 n_1^2 & \le - m_1^2 n_1  \\
0 & \le 2m_1 N - 2N - m_1^2 n_1 - m_1^2
\end{align*}
As $N \ge m_1 n_1$, we have $m_1 N \ge m_1^2 n_1$ so that

\bes
2m_1 N - 2N - m_1^2 n_1 - m_1^2 \ge (m_1 - 2)N - m_1^2.
\ees
Because $n_1 \ge 2$ we have $N \ge n_1m_1 \ge 2m_1$, so that

\bes
(m_1 - 2)N - m_1^2 \ge m_1^2 - 4m_1 \ge 0,
\ees
where we have used $m_1 \ge 4$ at the last step.

In the case $m_1 = 1$, we have

\bes
\tau'_1(\cS) = \binom{N}{2} - \binom{n_1}{2} + \sum_{i \ge 2} n_i \binom{m_i +1}{2} + 1,
\ees
and applying (\ref{binom}) gives

\bes
\tau'_1(\cS) \le \binom{N}{2} - \binom{n_1}{2} + \binom{N - n_1 + 1}{2} + 1.
\ees
It may be seen that the right hand side of this simplifies to $N(N - n_1) + 1$ as required.  Equality occurs when $d = N - n_1$ and we have equality in (\ref{binom}), which is equivalent to the conditions given in (\ref{equal1}).

In the case $m_1 = 2$, simplifying the definition of $\tau'_2(\cS)$ gives

\bes
\tau'_2(\cS) = \binom{N}{2} + \sum_{i \ge 2} n_i \binom{m_i +1}{2} + 3,
\ees
and after applying (\ref{binom}) we have

\bes
\tau'_2(\cS) \le \binom{N}{2} + \binom{N - 2 n_1 + 1}{2} + 3.
\ees
We must therefore show that

\bes
\binom{N}{2} + \binom{N - 2 n_1 + 1}{2} + 3 \le N(N - n_1) + 1.
\ees
Simplifying this gives

\begin{align*}
N(N-1) + (N - 2 n_1 + 1)(N - 2n_1) + 4 & \le 2N(N - n_1) \\
4 & \le 2Nn_1 - 4n_1^2 + 2n_1 \\
2 & \le n_1(N - 2n_1 + 1). \\
\end{align*}
The result now follows from $n_1 \ge 2$ and $N \ge n_1m_1 = 2n_1$, and equality occurs exactly in case (\ref{equal2}).

When $m_1 = 3$, after simplifying the definition of $\tau'_3(\cS)$ and dropping the $\epsilon$ term, we must show that

\bes
\binom{N}{2} + n_1 + 5 + \sum_{i \ge 2} n_i \binom{m_i +1}{2} \le \binom{N}{2} + n_1 + 5 + \binom{N - 3n_1 + 1}{2} \le N(N-n_1),
\ees
where the first inequality is (\ref{binom}).  Simplifying this gives

\begin{align*}
N(N-1) + 2n_1 + 10 + (N - 3n_1 + 1)(N - 3n_1) & \le 2N(N - n_1) \\
10 & \le n_1(4N - 9n_1 + 1).
\end{align*}
We have $n_1 \ge 2$ and $N \ge 3n_1$, so that $N \ge 6$ and $4N - 9n_1 + 1 \ge N+1 \ge 7$ as required.

\end{proof}

\section{Bounds for fixed vectors in representations of $GL_2$}\label{boundGL2}

Let $F$ be a $p$-adic field with residue field of order $q$, and residue characteristic different from 2 (i.e. $p\neq 2$).  Let $K_n$ be the standard principal congruence subgroups of $GL_2(F)$.  This section establishes the following bound that was used in the proof of Lemma \ref{sunramified}, both directly and as an ingredient in the proof of Theorem \ref{invdim}.

\begin{prop}
\label{GL2supercusp} Assume $p\neq 2$.
If $\pi$ is a supercuspidal representation of $GL_2(F)$, then $$\dim \pi^{K_n} \le q^n(1 + 1/q).$$
\end{prop}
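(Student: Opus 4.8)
The plan is to use the explicit construction of supercuspidal representations of $GL_2(F)$ (for $p \neq 2$) as compactly induced representations, together with Mackey theory, to count invariant vectors under $K_n$. Recall that every supercuspidal $\pi$ of $GL_2(F)$ arises as $\pi = \text{c-Ind}_J^{GL_2(F)} \lambda$, where $J$ is an open subgroup, compact modulo the center, containing $Z \cdot GL_2(\cO_F)$ or $Z \cdot K$ for $K$ the normalizer of the Iwahori, and $\lambda$ is a finite-dimensional representation of $J$. (When $p \neq 2$ these are exhaustive by the classical work of Kutzko; the two cases are the "unramified" or "depth via $GL_2(\cO_F)$" type with $J = Z\,GL_2(\cO_F)$, and the "ramified" type with $J$ the normalizer of an Iwahori subgroup.) I would first reduce to a uniform statement: it suffices to show that for either type of $J$ and any such $\lambda$, the space of $K_n$-fixed vectors in $\text{c-Ind}_J^{GL_2(F)} \lambda$ has dimension at most $q^n(1 + 1/q)$.

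The key step is the Mackey-type formula $(\text{c-Ind}_J^{G} \lambda)^{K_n} \cong \bigoplus_{JgK_n} (\lambda^{g})^{J^g \cap K_n}$, where the sum is over the double cosets $J \backslash G / K_n$ that are \emph{finite in number} (since $K_n \supset$ some small congruence subgroup and $J$ is open, or more concretely since $J \backslash G / GL_2(\cO_F)$ is finite and $K_n$ has bounded index). For each relevant double coset one bounds the contribution $\dim (\lambda^g)^{J^g \cap K_n} \le \dim \lambda \cdot [\,?\,]$, and the main point is that $J^g \cap K_n$ is large — of small index — in the relevant compact subgroup through which $\lambda$ is (projectively) detected, so that the fixed-space is controlled by the ramification data of $\lambda$ rather than growing with the conductor. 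For the unramified type, $J = Z \cdot GL_2(\cO_F)$, and $\lambda$ is (up to a central twist) inflated from a cuspidal representation of $GL_2(\F_q)$, of dimension $q-1$; the count of double cosets and the index computation then give a bound of the shape $q^{n-1}(q-1)\cdot(\text{small factor})$, which one checks is $\le q^n(1+1/q)$. For the ramified type, $\dim \lambda$ is smaller (it is $1$ or $2$ depending on the precise construction) but the number of double cosets is correspondingly larger, and the same arithmetic bookkeeping yields the same final bound. One should organize the computation around the Cartan/affine Bruhat decomposition $G = \coprod_K Z\,GL_2(\cO_F)\, \mathrm{diag}(\varpi^a,1)\, GL_2(\cO_F)$ to enumerate cosets cleanly, then refine modulo $K_n$.

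The main obstacle I anticipate is the bookkeeping at the place where one passes from $J \backslash G / GL_2(\cO_F)$ (or the Iwahori) down to $J \backslash G / K_n$: one must show the number of cosets grows like $q^{n-1}$ (not faster) and simultaneously that on each coset $\lambda^g$ has few $J^g \cap K_n$-fixed vectors. Getting the constant exactly right — the $(1+1/q)$ rather than something slightly larger — will require being careful about the boundary cosets and possibly treating the depth-zero and positive-depth supercuspidals with slightly different arguments (in the depth-zero case the bound should essentially come from $\dim$ of a cuspidal representation of $GL_2(\F_q)$ being $q-1 < q(1+1/q) = q+1$, together with a single "new at level $n$" coset). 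A cleaner alternative, which I would try in parallel, is to avoid the type theory and instead use the theory of the conductor / epsilon factors: a supercuspidal $\pi$ of $GL_2(F)$ has conductor $\ge 2$, and one can relate $\dim \pi^{K_n}$ to the dimension of the space of vectors of level $\le n$ under the $\Gamma_0$-filtration via the local functional equation and Casselman's newvector theorem, giving an exact formula $\dim \pi^{K_n} = \sum_{j} (\text{number of } \gamma_0\text{-new vectors at level } j) \cdot (\text{index})$, from which the stated inequality drops out by summing a geometric series. Either route should work; the induction-from-compact route is more self-contained and is presumably what the authors intend given the footnote's reference to "a construction of supercuspidals via Weil representations."
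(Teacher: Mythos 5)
Your strategy (compact induction from a type plus Mackey theory) is not the one the paper uses here, and as written it has a real gap precisely at the point you flag. The paper's proof goes through the Weil-representation model of Gelbart: $\pi|_{G_+}$ is realized on a space of Schwartz functions $\cS(L)_\chi$ on a quadratic extension $L/F$, and the explicit action of the upper unipotent subgroup forces any $K_n$-fixed function to be supported in a small ball, while invariance under the Weyl element (which normalizes $K_n$) forces it to be locally constant at a matching scale; counting orbits of the norm-one group $L^1$ on the resulting finite quotient gives exactly $q^a$ (resp.\ $(q^n+q^{n-1})/2$) per piece, and summing the two pieces $\pi_{\tau,\chi}\oplus\pi_{\tau',\chi}$ (whose conductors have opposite parity in the unramified case) yields $q^n+q^{n-1}$ on the nose. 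This is what the footnote about ``a construction of supercuspidals via Weil representations'' refers to; it is not the type-theoretic compact induction you describe.

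The gap in your route is the constant. The Mackey count $\dim\pi^{K_n}\le\sum_{JgK_n}\dim(\Lambda^g)^{J^g\cap K_n}$ requires you to (i) bound the number of Cartan cells $ZK\,\mathrm{diag}(\varpi^a,1)K$ that support invariants (this is governed by the conductor $j$ of the inducing data, giving roughly $n-j+1$ cells), (ii) count $(J,K_n)$-double cosets inside each cell (roughly $q^{a}(1+1/q)$ for the cell indexed by $a\le n-j$), and (iii) multiply by $\dim\Lambda\approx q^{j}$. Done naively, as in the paper's own $GL_3$ argument (Theorem \ref{invdim}, which only achieves $9n^2q^{4n}(1+1/q)^3$ against a true order of $q^{3n}$), this produces an extra factor of $n-j+1$ from step (i), i.e.\ a bound of the shape $nq^n(1+1/q)$, which is strictly weaker than the proposition. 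To recover $q^n(1+1/q)$ you must sum the geometric series over cells rather than take $(\text{number of cells})\times(\text{worst cell})$, and you must use the exact value $\dim\Lambda=q^{j}(1-1/q)$ (with separate treatment of odd $j$, depth zero, and the ramified type) so that the factors $\frac{q+1}{q-1}$ and $(1-1/q)$ cancel to $(1+1/q)$. None of this bookkeeping is carried out in your proposal, and it is exactly where the content of the statement lies. Your fallback via Casselman newvectors is also not a proof as stated: newvector theory controls fixed vectors under the mirahoric subgroups $K_0(\p^m)$ with a character, not under the principal congruence subgroups $K_n$, and the asserted ``exact formula'' for $\dim\pi^{K_n}$ as a sum over new-at-level-$j$ contributions is not justified.
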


\subsection{Review of supercuspidal representations}

We shall prove Proposition \ref{GL2supercusp} using the construction of supercuspidal representations of $GL_2(F)$ described in \cite[Section 7.A]{Ge}.  This produces a supercuspidal representation from a quadratic extension $L/F$ and a character $\chi$ of $L^\times$ that does not factor through the norm map to $F^\times$, and by Theorem 7.4 there, all supercuspidals are obtained in this way.

Fix such an extension $L$ and a character $\chi$.  Let $\cO$ and $\cO_L$ be the integer rings of $F$ and $L$, with maximal ideals $\p$ and $\p_L$, and let $\varpi$ and $\varpi_L$ be a choice of uniformizers.  We denote the conjugation of $L$ over $F$ by $x \mapsto \overline{x}$.  Let $N$ be the norm map from $L$ to $F$, and let $\omega$ be the non-trivial character of $F^\times / N(L^\times)$. Denote by $L^1$ the kernel of the norm map. Let $G_+$ be the subgroup of $GL_2(F)$ consisting of those $g$ with $\det(g) \in N(L^\times)$.  Let $\cS(L)$ be the space of Schwartz functions on $L$, and let $\cS(L)_\chi \subset \cS(L)$ be the subspace of functions satisfying $\Phi(xy) = \chi(y)^{-1}\Phi(x)$ for $y \in L^1$.

If $\tau$ is a non-trivial additive character of $F$, we may define a Fourier transform on $\cS(L)$ by setting $\langle x, y \rangle = \tau( \tr_{L/F}(xy))$ and defining
\[
\widehat{\Phi}(x) = \int_{L} \Phi(y) \langle x, y \rangle dy,
\]
where the Haar measure is normalised so that $\widehat{\widehat{\Phi}}(x) = \Phi(-x)$.  There is an irreducible representation $\pi_{\tau, \chi}$ of $G_+$ on $\cS(L)_\chi$ that satisfies

\begin{align}
\label{pitau1}
\pi_\tau\left( \left( \begin{array}{cc} 1 & u \\ 0 & 1 \end{array} \right) \right)\Phi(x) & = \tau( u N(x)) \Phi(x), \\
\label{pitau2}
\pi_\tau\left( \left( \begin{array}{cc} a & 0 \\ 0 & a^{-1} \end{array} \right) \right)\Phi(x) & = \omega(a) |a|^{1/2}_{L} \Phi(ax), \\
\label{pitau3}
\pi_\tau\left( \left( \begin{array}{cc} 0 & 1 \\ -1 & 0 \end{array} \right) \right)\Phi(x) & = \gamma \widehat{\Phi}(\overline{x}),
\end{align}
where $\gamma$ is a complex number of norm 1 depending only on $L$ and $\chi$, and $| \cdot |_L$ is the absolute value on $L$.  Note that the isomorphism class of $\pi_{\tau, \chi}$ depends only on $\chi$ and the orbit of $\tau$ under the action of $N(L^\times)$ on $\widehat{F} - \{ 0 \}$.  The supercuspidal $\pi$ associated to $L$ and $\chi$ is the induction of $\pi_{\tau,\chi}$ from $G_+$ to $GL_2(F)$.  Note that $\pi$ is independent of $\tau$, and if $\tau$ and $\tau'$ are representatives for the two orbits of $N(L^\times)$ on $\widehat{F} - \{ 0 \}$, the restriction of $\pi$ to $G_+$ is equal to $\pi_{\tau, \chi} \oplus \pi_{\tau', \chi}$.

We use this description of $\pi$ to bound $\dim \pi^{K_n}$.  We first observe that $K_n \subset G_+$ for all $n \ge 1$ by our assumption that $2 \nmid q$, and so $\dim \pi^{K_n} = \dim \pi_{\tau,\chi}^{K_n} + \dim \pi_{\tau',\chi}^{K_n}$.  We break the proof into the case where $L/F$ is ramified or unramified.

\subsection{The unramified case}

Let the conductor of $\tau$ be $\p^c$.

\begin{lemma}
\label{GL2unram1}

If $\Phi \in \pi_{\tau, \chi}^{K_n}$, then $\textup{supp}(\Phi) \subset \p_L^{\lceil (c-n)/2 \rceil}$ and $\Phi$ is constant on cosets of $\p_L^{c - \lceil (c-n)/2 \rceil}$.

\end{lemma}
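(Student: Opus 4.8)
The plan is to exploit the explicit formulas \eqref{pitau1} and \eqref{pitau3} for the action of $\pi_{\tau,\chi}$ on $\cS(L)_\chi$, using only the invariance of $\Phi$ under the unipotent elements $n(u) := \left(\begin{smallmatrix} 1 & u \\ 0 & 1 \end{smallmatrix}\right)$ with $u \in \p^n$ and under the Weyl element $w := \left(\begin{smallmatrix} 0 & 1 \\ -1 & 0 \end{smallmatrix}\right)$; here $n(u) \in K_n$, $w \in GL_2(\cO) \cap G_+$, and $w$ normalizes $K_n$.

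First I would obtain the support statement. By \eqref{pitau1}, $\pi_\tau(n(u))\Phi(x) = \tau(uN(x))\Phi(x)$, so invariance under all $n(u)$, $u \in \p^n$, forces $\tau(uN(x)) = 1$ for every $x \in \textup{supp}(\Phi)$ and every $u \in \p^n$; since $\tau$ has conductor $\p^c$ this is equivalent to $N(x) \in \p^{c-n}$. In the unramified case a uniformizer of $F$ is a uniformizer of $L$, and conjugation preserves the valuation $v_L$, so $N(x) = x\overline{x}$ has valuation $2v_L(x)$; hence $2v_L(x) \ge c-n$, i.e.\ $v_L(x) \ge \lceil (c-n)/2 \rceil =: m$. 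This is the first assertion $\textup{supp}(\Phi) \subset \p_L^m$.

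Next I would bootstrap using $w$. Since $w$ normalizes $K_n$ and $w \in G_+$, the function $\Psi := \pi_\tau(w)\Phi$ again lies in $\pi_{\tau,\chi}^{K_n}$, and by \eqref{pitau3} it is $\Psi(x) = \gamma\,\widehat{\Phi}(\overline{x})$. Applying the support bound just proved to $\Psi$, together with the fact that conjugation preserves $\p_L^m$, yields $\textup{supp}(\widehat{\Phi}) \subset \p_L^m$. Finally I would translate this into periodicity of $\Phi$ via Fourier duality: from $\widehat{\widehat{\Phi}}(x) = \Phi(-x)$, the function $\Phi$ is, up to the reflection $x \mapsto -x$, the Fourier transform of a function supported in $\p_L^m$, hence invariant under translation by any $h \in L$ with $\tau(\tr_{L/F}(hz)) = 1$ for all $z \in \p_L^m$. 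Because $L/F$ is unramified its different is trivial, so $\tr_{L/F}(\p_L^j) = \p^j$, and the condition reads $\tr_{L/F}(h\,\p_L^m) \subset \p^c$, i.e.\ $h \in \p_L^{c-m}$. Thus $\Phi$ is constant on cosets of $\p_L^{c-m} = \p_L^{c - \lceil (c-n)/2 \rceil}$, which is the second assertion.

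Each step is short, so the only care needed is the exponent bookkeeping: tracking the factor of $2$ from the norm map (which is what produces the ceiling), the identification of the conductor of $z \mapsto \tau(\tr_{L/F}(z))$ on $L$ with $\p_L^c$ --- valid precisely because $L/F$ is unramified --- and the normalization $\widehat{\widehat{\Phi}}(x) = \Phi(-x)$. I expect pinning down the constants $m$ and $c-m$ exactly to be the fiddliest part, but there is no substantive obstacle.
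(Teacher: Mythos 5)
Your proof is correct and follows essentially the same route as the paper: the support bound comes from the $K_n$-invariance under the upper unipotent elements via \eqref{pitau1}, and the periodicity comes from applying that bound to $\pi_{\tau,\chi}(w)\Phi=\gamma\widehat{\Phi}(\overline{\,\cdot\,})$ using \eqref{pitau3} and then invoking Fourier duality. The extra bookkeeping you supply (the factor of $2$ from the norm, triviality of the different in the unramified case) is exactly what the paper leaves implicit.
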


\begin{proof}

If $x \in \text{supp}(\Phi)$ then (\ref{pitau1}) implies that $\tau( u N(x)) = 1$ for all $u \in \p^n$.  This implies that $N(x) \in \p^{c-n}$, or $x \in \p_L^{\lceil (c-n)/2 \rceil}$ as required.  For the second assertion, we note that $\left( \begin{array}{cc} 0 & 1 \\ -1 & 0 \end{array} \right)$ normalizes $K_n$, so by (\ref{pitau3}) we have $\pi_{\tau, \chi}\left( \left( \begin{array}{cc} 0 & 1 \\ -1 & 0 \end{array} \right) \right) \Phi = \gamma \widehat{\Phi}(\overline{\cdot}) \in \pi_{\tau, \chi}^{K_n}$.  Applying the first assertion, we find that $\text{supp}(\widehat{\Phi}) \subset \p_L^{\lceil (c-n)/2 \rceil}$.  The assertion now follows from the fact that if $y \in \p_L^a$, then the character $\langle x, y \rangle$ is constant on cosets of $\p_L^{c-a}$.

\end{proof}

\begin{lemma}
\label{GL2unram2}

We have $\dim \pi_{\tau,\chi}^{K_n} \le q^a$, where $a$ is equal to $n$ if $c-n$ is even and $n-1$ if $c-n$ is odd.

\end{lemma}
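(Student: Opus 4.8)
The plan is to exploit the explicit Weil-representation model $\cS(L)_\chi$ for $\pi_{\tau,\chi}$ together with the two constraints coming from Lemma \ref{GL2unram1}, thereby reducing the dimension count to a count of orbits of the norm-one group $L^1$ acting by multiplication on a finite quotient of $\p_L$.

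First I would set $r=\lceil (c-n)/2\rceil$ and $s=c-r$, and record the elementary ceiling computation that $s-r=a$, i.e.\ $s-r$ equals $n$ if $c-n$ is even and $n-1$ if $c-n$ is odd. By Lemma \ref{GL2unram1}, any $\Phi\in\pi_{\tau,\chi}^{K_n}$ is supported in $\p_L^r$ and is constant on cosets of $\p_L^s\subset\p_L^r$, so $\Phi$ descends to a function on the finite set $\p_L^r/\p_L^s$. Next I would note that $\Phi(0)=0$: since $\chi$ does not factor through the norm $N\colon L^\times\to F^\times$ and $L^\times/L^1\cong N(L^\times)$, the restriction $\chi|_{L^1}$ is nontrivial, and evaluating the defining relation $\Phi(x\zeta)=\chi(\zeta)^{-1}\Phi(x)$ (for $\zeta\in L^1$) at $x=0$ forces $\Phi$ to vanish there. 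Hence $\Phi$ may be regarded as a function on $V:=(\p_L^r/\p_L^s)\setminus\{0\}$.

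The group $L^1\subset\cO_L^\times$ preserves $\p_L^r$ and $\p_L^s$, hence acts on $V$, and $\Phi$ transforms by $\chi^{-1}$ under this action. Therefore $\Phi$ is determined by its values on a set of $L^1$-orbit representatives in $V$, and it must vanish on any orbit whose common stabilizer is not contained in $\ker\chi$. This yields
\[
\dim\pi_{\tau,\chi}^{K_n}\ \le\ \#\{\,L^1\text{-orbits on }V\,\},
\]
and it remains to show this orbit count is at most $q^a$. To do so I would stratify $V$ by the valuation $j$, which runs over $r\le j\le s-1$: the $L^1$-stabilizer of the class of an element of valuation $j$ is exactly $L^1_{s-j}$, where $L^1_m$ denotes the elements of $L^1$ congruent to $1$ modulo $\p_L^m$. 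Using the structure of the congruence filtration of the norm-one subgroup in the unramified quadratic case --- namely $[L^1:L^1_1]=q+1$ (the norm-one elements of the residue field $\F_{q^2}^\times$) and $[L^1_m:L^1_{m+1}]=q$ for $m\ge 1$ (the trace-zero line in $\F_{q^2}$) --- together with $|\p_L^j/\p_L^s|=q^{2(s-j)}$, one finds $(q-1)q^{\,s-j-1}$ orbits of valuation exactly $j$; summing the geometric series over $j$ gives $q^{s-r}-1=q^a-1\le q^a$.

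I expect the main obstacle to be this last computation: correctly pinning down the indices $[L^1_m:L^1_{m+1}]$ of the congruence filtration of $L^1$ and lining the resulting orbit sizes up against the residue-degree factor $q^2$. This is the one place where the quadratic nature of $L/F$ genuinely enters, and it requires a careful (though standard) local computation with norms and traces on the successive quotients of $\cO_L^\times$. Everything preceding it is bookkeeping built directly on Lemma \ref{GL2unram1} and the transformation law defining $\cS(L)_\chi$.
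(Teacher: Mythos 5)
Your argument is correct and follows essentially the same route as the paper: both reduce, via Lemma \ref{GL2unram1} and the $L^1$-transformation law, to counting $L^1$-orbits on $\p_L^{\lceil (c-n)/2\rceil}/\p_L^{c-\lceil (c-n)/2\rceil}$ and then compute that count by stratifying by valuation, using $[L^1:L^1\cap(1+\p_L^t)]=(q+1)q^{t-1}$, to get the geometric series summing to $q^a$. Your extra observation that $\Phi(0)=0$ (so the zero orbit can be discarded, giving $q^a-1$) is a correct minor refinement that the paper does not bother with.
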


\begin{proof}

By combining Lemma \ref{GL2unram1} with the transformation property of any $\Phi \in \pi_{\tau, \chi}$ under $L^1$, we see that $\dim \pi_{\tau, \chi}^{K_n}$ is at most the number of orbits of $L^1$ on $\p_L^{\lceil (c-n)/2 \rceil} / \p_L^{c - \lceil (c-n)/2 \rceil}$, or equivalently the number of orbits on $\cO_L / \p_L^{c - 2 \lceil (c-n)/2 \rceil}$.  We note that $c - 2 \lceil (c-n)/2 \rceil = a$.  Using the decomposition
\[
\cO_L / \p_L^a = \{ \p_L^a \} \cup \bigcup_{t=1}^{a} \varpi^{a-t} [ (\cO_L / \p_L^{t} )^\times ],
\]
it may be seen that the number of orbits is
\[
1 + \sum_{t=1}^{a} \frac{ | (\cO_L / \p_L^{t} )^\times | }{ | L^1 / (L^1 \cap (1 + \p_L^{t} )) | } = 1 + \sum_{t=1}^{a} \frac{(q^2-1)q^{2(t-1)}}{(q+1)q^{t-1}} = 1+ \sum_{t=1}^{a} (q-1) q^{t-1} = q^a
\]
as required.

\end{proof}

Proposition \ref{GL2supercusp} now follows from Lemma \ref{GL2unram2}.  Indeed, if we let $\tau$ and $\tau'$ be representatives for the orbits of $N(L^\times)$ on $\widehat{F} - \{0 \}$, then the conductor exponents of $\tau$ and $\tau'$ must have opposite parities.  Combining $\dim \pi^{K_n} = \dim \pi_{\tau,\chi}^{K_n} + \dim \pi_{\tau',\chi}^{K_n}$ with Lemma \ref{GL2unram2} gives the proposition in this case.

\subsection{The ramified case} Again $c$ is the integer such that the conductor of $\tau$ is $\p^c$.

\begin{lemma}
If $\Phi \in \pi_{\tau, \chi}^{K_n}$, then $\textup{supp}(\Phi) \subset \p_L^{c-n}$ and $\Phi$ is constant on cosets of $\p_L^{c+n-1}$.
\end{lemma}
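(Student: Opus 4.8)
The plan is to mirror the proof of Lemma~\ref{GL2unram1} from the unramified case, replacing the norm and trace estimates by their ramified analogues. The one structural input specific to this case is that, since $p \neq 2$, a ramified quadratic extension $L/F$ is tamely ramified, so its different is $\mathfrak{d}_{L/F} = \p_L$; equivalently the codifferent $\{w \in L : \tr_{L/F}(w\cO_L) \subseteq \cO\}$ equals $\p_L^{-1}$. I also use the valuation identity $v_F(N(x)) = v_L(x)$, which holds because $v_L(N(x)) = v_L(x) + v_L(\overline{x}) = 2 v_L(x)$ while $v_L$ restricts to $2 v_F$ on $F^\times$ (here $e=2$, so $\varpi\cO_L = \p_L^2$).

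First I would prove the support statement. Given $\Phi \in \pi_{\tau,\chi}^{K_n}$, applying (\ref{pitau1}) to the unipotent elements $\left(\begin{smallmatrix} 1 & u \\ 0 & 1 \end{smallmatrix}\right)$ with $u \in \p^n$ shows that $\tau(u N(x)) \Phi(x) = \Phi(x)$ for all such $u$; hence for $x \in \textup{supp}(\Phi)$ the fractional ideal $N(x)\p^n$ is contained in the conductor $\p^c$ of $\tau$, i.e. $v_F(N(x)) \ge c - n$. By the valuation identity this says $v_L(x) \ge c-n$, so $\textup{supp}(\Phi) \subseteq \p_L^{c-n}$.

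Next I would obtain the statement that $\Phi$ is constant on cosets of $\p_L^{c+n-1}$, using the same Fourier-duality trick as in the unramified case. Since $\left(\begin{smallmatrix} 0 & 1 \\ -1 & 0 \end{smallmatrix}\right)$ normalizes $K_n$ and lies in $G_+$, relation (\ref{pitau3}) gives $\gamma \widehat{\Phi}(\overline{\cdot}) \in \pi_{\tau,\chi}^{K_n}$, and the support statement just proved, together with the fact that conjugation preserves $\p_L^{c-n}$, yields $\textup{supp}(\widehat\Phi) \subseteq \p_L^{c-n}$. By Fourier inversion $\Phi(x) = \widehat{\widehat\Phi}(-x) = \int_L \widehat\Phi(y)\langle -x, y\rangle\, dy$ with $\widehat\Phi$ supported on $\p_L^{c-n}$, so $\Phi(x+z) = \Phi(x)$ once $\langle z, y\rangle = \tau(\tr_{L/F}(zy)) = 1$ for all $y \in \p_L^{c-n}$. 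Writing $y = \varpi_L^{c-n} y'$ with $y' \in \cO_L$, this is the condition $\tr_{L/F}\big( (\varpi^{-c}\varpi_L^{c-n} z)\, \cO_L \big) \subseteq \cO$, i.e. $\varpi^{-c}\varpi_L^{c-n} z \in \mathfrak{d}_{L/F}^{-1} = \p_L^{-1}$; since $\varpi\cO_L = \p_L^2$, this rearranges to $z \in \p_L^{2c - (c-n) - 1} = \p_L^{c+n-1}$, as claimed.

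The routine part is the bookkeeping of exponents through the different and the equality $\varpi \cO_L = \p_L^2$; the only place one has to be careful is keeping track of the tame-ramification hypothesis $p \neq 2$ that makes $\mathfrak{d}_{L/F} = \p_L$ (rather than a higher power), since the final exponent $c+n-1$ depends on it. No serious obstacle is anticipated: the argument is a direct transcription of the unramified Lemma~\ref{GL2unram1}.
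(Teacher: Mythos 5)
Your proposal is correct and follows essentially the same route as the paper: the support bound via \eqref{pitau1} together with $v_F(N(x))=v_L(x)$, and the constancy via the Fourier transform of $\widehat{\Phi}$ supported in $\p_L^{c-n}$. The only difference is that you explicitly justify, via tame ramification and the codifferent $\p_L^{-1}$, the duality statement that for $y\in\p_L^{a}$ the character $\langle \cdot, y\rangle$ is constant on cosets of $\p_L^{2c-1-a}$, which the paper asserts without proof.
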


\begin{proof}

If $x \in \text{supp}(\Phi)$ then (\ref{pitau1}) implies that $\tau( u N(x)) = 1$ for all $u \in \p^n$.  This implies that $N(x) \in \p^{c-n}$, or $x \in \p_L^{c-n}$ as required.  For the second assertion, we have $\widehat{\Phi}(\overline{\cdot}) \in \pi_{\tau, \chi}^{K_n}$ as before, so that $\text{supp}(\widehat{\Phi}) \subset \p_L^{c-n}$.  The assertion now follows from the fact that if $y \in \p_L^a$, then the character $\langle x, y \rangle$ is constant on cosets of $\p_L^{2c-1-a}$.

\end{proof}

As in Lemma \ref{GL2unram2}, we must now count the orbits of $L^1$ on $\cO_L / \p_L^{2n-1}$.  We again write
\[
\cO_L / \p_L^{2n-1} = \{ \p_L^{2n-1} \} \cup \bigcup_{t=1}^{2n-1} \varpi_L^{2n-1-t} [ (\cO_L / \p_L^{t} )^\times ],
\]
so that the number of orbits is
\begin{align*}
1 + \sum_{t=1}^{2n-1} \frac{ | (\cO_L / \p_L^{t} )^\times | }{ | L^1 / (L^1 \cap (1 + \p_L^t )) | } & = 1 + \sum_{t=1}^{2n-1} \frac{(q-1)q^{t-1}}{2q^{\lfloor t/2 \rfloor}} \\
& = 1+ \frac{q-1}{2}(q^{n-1} + 2q^{n-2} + \ldots + 2 ) \\
& = (q^n + q^{n-1})/2.
\end{align*}
We therefore have $\dim \pi_{\tau, \chi}^{K_n} \le (q^n + q^{n-1})/2$, and the proposition now follows from $\dim \pi^{K_n} = \dim \pi_{\tau,\chi}^{K_n} + \dim \pi_{\tau',\chi}^{K_n}$.

\section{Bounds for fixed vectors in representations of $GL_3$}
\label{sec:Hoew}

Let $F$ be a $p$-adic field. Throughout this section we assume $p\neq 2,3$. Let $R$ be the ring of integers of $F$, $\varpi$ a uniformizer, $k$ the residue field, and $q$ its cardinality. Write $v:F^\times \to \Z$ for the additive valuation such that $v(\varpi)=1$. Let $G = GL_3(F)$, $K = GL_3(R)$, and $A = M_3(R)$.  Let $K_j$ be the subgroup of $K$ containing all elements congruent to 1 modulo $\varpi^j$. Put $U_j=1+\pi^j R$, a subgroup of $F^\times$. The aim of this section is to prove the following bound, which is used in the proof of Lemma \ref{sunramified} (thus in the proof of Proposition \ref{singlefinite}) when $m_1=3$.

\begin{theorem}
\label{invdim}
Assume $p\neq 2,3$.
If $\pi$ is an irreducible admissible representation of $G$, then
$$\dim \pi^{K_n} \le 9n^2 q^{4n} (1 + 1/q)^3.$$

\end{theorem}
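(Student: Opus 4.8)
The plan is to split the argument according to whether $\pi$ is supercuspidal; the non-supercuspidal case is elementary and reduces to the $GL_2$ bound of the previous section, while the supercuspidal case, handled through Howe's construction, is the substance.

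\emph{Non-supercuspidal $\pi$.} If $\pi$ is not supercuspidal it is a subquotient of $\mathrm{Ind}_P^G\sigma$ for an irreducible supercuspidal representation $\sigma$ of the Levi factor $M$ of a proper parabolic $P\subset GL_3(F)$, so $M\cong GL_2(F)\times GL_1(F)$ or $M$ is the diagonal torus. Taking $K_n$-invariants is exact on smooth representations, so $\dim\pi^{K_n}\le\dim(\mathrm{Ind}_P^G\sigma)^{K_n}$, and since $K_n$ is normal in $K=GL_3(R)$ the Iwasawa decomposition $G=PK$ gives
\bes
\dim(\mathrm{Ind}_P^G\sigma)^{K_n}=|P\backslash G/K_n|\cdot\dim\sigma^{M\cap K_n},
\ees
where $|P\backslash G/K_n|$ is the number of $R/\varpi^n$-points of the flag variety $G/P$: this is $q^{2n}(1+q^{-1}+q^{-2})$ when $M\cong GL_2(F)\times GL_1(F)$ and at most $q^{3n}(1+1/q)^3$ when $M$ is the torus. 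In the first case $\sigma=\sigma_1\boxtimes\chi$ with $\sigma_1$ supercuspidal, so $\dim\sigma^{M\cap K_n}=(\dim\sigma_1^{K_n'})(\dim\chi^{U_n})\le q^n(1+1/q)$ by Proposition \ref{GL2supercusp} and $\dim\chi^{U_n}\le1$, where $K_n'$ is the level-$n$ principal congruence subgroup of $GL_2(R)$; in the torus case $\dim\sigma^{M\cap K_n}\le1$. In all cases $\dim\pi^{K_n}\le q^{3n}(1+1/q)^3$, well within the asserted bound.

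\emph{Supercuspidal $\pi$: the model.} Since $p\neq3$, Howe's construction of the tamely ramified supercuspidal representations of $GL_3(F)$ realizes $\pi$ as a compact induction $\mathrm{Ind}_J^G\Lambda$, where $J$ is an open subgroup, compact modulo the centre, normalized by $L^\times$ for a cubic extension $L/F$ — either unramified or totally ramified, the two cases requiring parallel treatment — with $L^\times(1+\mathfrak P)\subseteq J\subseteq L^\times\mathfrak A^\times$ for $\mathfrak A$ the principal hereditary order in $M_3(F)$ attached to $L$ and $\mathfrak P$ its radical, and where $\Lambda$ is an irreducible representation of $J$ built from an admissible character $\theta$ of $L^\times$ of some level $m\ge0$. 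The two features I would use are: (i) $\dim\Lambda$ is bounded by a fixed power of $q$ independently of $m$ — it is the dimension of a Heisenberg representation, hence at most $q^3$; and (ii) $\Lambda$ restricts nontrivially to the filtration subgroups of $J$ singled out by $\theta$, so it has no invariant vectors under subgroups of $J$ that are large relative to $\theta$.

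\emph{Supercuspidal $\pi$: the count.} Mackey's formula for $\mathrm{Res}_{K_n}\mathrm{Ind}_J^G\Lambda$ and Frobenius reciprocity give
\bes
\dim\pi^{K_n}=\sum_{g\in J\backslash G/K_n}\dim\Lambda^{gK_ng^{-1}\cap J}\le\dim\Lambda\cdot\#\big\{g\in J\backslash G/K_n:\Lambda^{gK_ng^{-1}\cap J}\neq0\big\},
\ees
a finite sum by admissibility. With $\dim\Lambda$ under control the problem becomes the count of contributing double cosets. A coset $g$ contributes only when $gK_ng^{-1}\cap J$ is transverse to the part of $J$ carrying the data of $\Lambda$: the intersection $gK_ng^{-1}\cap L^\times$ must lie in $\ker\theta$, and $gK_ng^{-1}$ must meet the Heisenberg layer of $J$ in a subgroup on which the Heisenberg representation is trivial. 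Unwinding these conditions with the lattice-chain description of $\mathfrak A$ and a Cartan-type decomposition of $G$ relative to $L^\times\mathfrak A^\times$ confines $g$ to a union of $J$-cosets whose number I would estimate by an explicit $GL_3$ lattice-point count; this produces the factor $n^2$ (the two ``radial'' directions the transversality leaves free) and the constants $9$ and $(1+1/q)^3$ (the $GL_3$ volume corrections), and combined with $\dim\Lambda\le q^3$ gives $\dim\pi^{K_n}\le9n^2q^{4n}(1+1/q)^3$.

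\emph{Main obstacle.} The work is entirely in this last count. The exponent $4$ is deliberately lossy — the true order of $\dim\pi^{K_n}$ for a $GL_3$ supercuspidal is $q^{3n}$ — but even the cruder bound requires carrying out the transversality and coset analysis carefully, uniformly in the level $m$ of $\theta$, and separately tracking the unramified-cubic and totally-ramified-cubic cases, in which $\mathfrak A$ and hence all the filtration indices differ. Verifying that the small-rank combinatorics of $GL_3$ really yields the clean exponent, with only the harmless $n^2$ and $(1+1/q)^3$ slack, together with the bookkeeping for the ramified case, is the technical heart of the section.
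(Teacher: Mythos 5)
Your outline coincides with the paper's: the non-supercuspidal cases are dispatched exactly as in the paper (Iwasawa decomposition together with Proposition \ref{GL2supercusp}), and for supercuspidals both you and the paper realize $\pi$ through Howe's construction as a compact induction and bound $\dim\pi^{K_n}$ by $\dim\Lambda$ times the number of double cosets supporting $K_n$-invariant vectors. The difficulty is that the supercuspidal half of your write-up is a program rather than a proof, and beyond the unexecuted coset count it contains two concrete problems. First, your choice of $J$ and your claim $\dim\Lambda\le q^3$ are in tension. If $J\supseteq L^\times(1+\mathfrak P)$ as you stipulate (in the unramified case $1+\mathfrak P=K_1$), the inducing representation is essentially the paper's $W(\psi')$, whose dimension is of order $q^{3j}$ where $j$ is the conductor of the character; it is \emph{not} uniformly bounded. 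Only the Heisenberg constituent at the half-level has dimension $q^3$, and it must still be induced up through roughly $j/2$ congruence layers. The paper accepts $\dim W(\psi')\le q^{3j}$ (resp.\ $q^{j}$ in the ramified case) and compensates by proving that the number of contributing double cosets decays like $q^{4n-4j}$ (resp.\ $q^{3n-j+i}$) --- this is the content of Lemmas \ref{lambdabound}, \ref{doublecoset}, \ref{ramlambdabound}, \ref{ramdoublecoset} --- so that the level-dependence cancels. You must either shrink $J$ to the half-level subgroup (and then redo the coset count for that smaller $J$, where many more cosets contribute) or track the $j$-dependence through both factors; as written the two halves of your estimate do not fit together.

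Second, your transversality step --- that $\Lambda$ has no invariant vectors under subgroups of $J$ that are ``large relative to $\theta$'' --- requires the top layer of $\theta$ to be in general position, i.e.\ its standard representative must not lie in $F$. Admissibility for a cubic extension does not guarantee this: one can have $\theta=\theta_1\cdot(\theta''\circ N_{L/F})$ with $\theta_1$ of strictly smaller conductor, in which case the character of $K'_{j-1}/K'_j$ cut out by $\theta$ is represented by a scalar $b\in k^\times$ and is therefore trivial on the trace-zero unipotent pieces $Y$ used to eliminate large $\lambda$; the contradiction in Lemma \ref{lambdabound} evaporates and no bound on the contributing cosets follows. The paper devotes Section \ref{reduction} to peeling off such determinant twists and reducing to the case $c\notin F$ before running the count; some such reduction is indispensable and is missing from your sketch.
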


Here is an outline of the proof. By Jacquet's subrepresentation theorem, $\pi$ is a submodule of a representation parabolically induced from something supercuspidal.  We therefore have three cases to consider.

\begin{itemize}

\item $\pi$ is a submodule of a representation $I$ induced from a character of the standard Borel $B$.  In this case, $\dim \pi^{K_n}  \le \dim I^{K_n} \le | K : K_n (B \cap K) | \le q^{3n} (1 + 1/q)^3$.

\item $\pi$ is a submodule of a representation $I$ induced from the standard parabolic of type (2,1) or (1,2).  Let $P$ be one of these parabolics, and let the representation of the Levi $GL_2 \times GL_1$ that we induce be $\pi' \otimes \chi$, where $\pi'$ is supercuspidal.  If $K'_n$ are the standard principal congruence subgroups of $GL_2$, we have $\dim \pi^{K_n} \le \dim I^{K_n} \le | K : K_n (P \cap K) | \dim \pi'^{K'_n}$.  Proposition \ref{GL2supercusp} gives $\dim \pi'^{K'_n} \le q^n (1 + 1/q)$, and we have $| K : K_n (P \cap K) | \le q^{2n} (1 + 1/q)^2$, so that $\dim \pi^{K_n} \le q^{3n} (1 + 1/q)^3$ as required.

\item $\pi$ is supercuspidal.  In this case, we apply the construction of supercuspidal representations of $GL_n(F)$ by Howe in \cite{Ho} when $n=3$.  It was shown in \cite{Moy} that these exhaust all supercuspidal representations of $G$ when $p$ is not 3.  This occupies the remainder of this section.
\end{itemize}

We may rewrite Theorem \ref{invdim} in a form which is less sharp, but better suited to the proof of our main theorem.

\begin{cor}\label{corinvdim}
Assume $p\neq 2,3$.
If $\pi$ is an irreducible admissible representation of $G$, then for every $\epsilon > 0$ there is a constant $C(q,\epsilon) > 0$ such that
\[
\dim \pi^{K_n} \le C(q,\epsilon) q^{(4+\epsilon)n}.
\]
Moreover, for any $\epsilon > 0$ there is $q(\epsilon) > 0$ such that we may take $C(q,\epsilon) = 1$ for all $q > q(\epsilon)$.

\end{cor}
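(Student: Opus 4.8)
The plan is to read off Corollary~\ref{corinvdim} directly from Theorem~\ref{invdim} by absorbing the slowly growing prefactor $9n^2(1+1/q)^3$ into a factor $q^{\epsilon n}$. Since the residue characteristic is not $2$ or $3$ we have $q\ge 5$, so $(1+1/q)^3\le (6/5)^3<2$, and Theorem~\ref{invdim} already gives $\dim\pi^{K_n}\le 18\,n^2 q^{4n}$ for every irreducible admissible $\pi$ and every $n\ge 1$. Everything then reduces to elementary estimates comparing $n^2$ with $q^{\epsilon n}$.

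For the first assertion, fix $q$ and $\epsilon>0$. Because $q>1$, the quantity $18\,n^2 q^{-\epsilon n}$ tends to $0$ as $n\to\infty$ and is therefore bounded over $n\ge 1$; setting $C(q,\epsilon):=\sup_{n\ge 1} 18\,n^2 q^{-\epsilon n}<\infty$ gives $\dim\pi^{K_n}\le 18\,n^2 q^{4n}\le C(q,\epsilon)\,q^{(4+\epsilon)n}$, as required, with $C(q,\epsilon)$ depending only on $q$ and $\epsilon$.

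For the second assertion I would make the threshold explicit. Taking logarithms, the inequality $18\,n^2\le q^{\epsilon n}$ is equivalent to
\[
\ln q\ \ge\ \frac{1}{\epsilon}\left(\frac{\ln 18}{n}+\frac{2\ln n}{n}\right).
\]
Since $\ln n/n\le 1/e$ and $\ln 18/n\le \ln 18$ for all $n\ge 1$, the right-hand side is at most $(\ln 18+2/e)/\epsilon$ uniformly in $n$. Hence, taking $q(\epsilon):=\exp\!\big((\ln 18+2/e)/\epsilon\big)$, every $q>q(\epsilon)$ satisfies $18\,n^2 q^{4n}\le q^{(4+\epsilon)n}$ for all $n\ge 1$, so one may take $C(q,\epsilon)=1$ there.

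There is essentially no obstacle: all of the substance is contained in Theorem~\ref{invdim}, and the only point needing a little care is that its bound is uniform in both $\pi$ and $n$ — which it is — so that the resulting constants $C(q,\epsilon)$ and $q(\epsilon)$ depend only on $q$ and $\epsilon$ and not on the representation.
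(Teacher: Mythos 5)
Your proposal is correct and is essentially the argument the paper intends: the corollary is stated as an immediate weakening of Theorem \ref{invdim}, with the prefactor $9n^2(1+1/q)^3$ absorbed into $C(q,\epsilon)\,q^{\epsilon n}$ (or into $q^{\epsilon n}$ alone once $q$ is large), and your explicit choices of $C(q,\epsilon)$ and $q(\epsilon)$ make this deduction precise.
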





\begin{remark}\label{qualitybound}
We may obtain a uniform bound on $\dim \pi^{K_n}$ of order $p^{8n}$ using the Plancherel theorem.  Indeed, if we let $Z_K$ be the center of $K$, and $\omega$ be the central character of $\pi$, we may define $f$ to be the function supported on $Z_K K_n$ and given by $f(z k) = \omega^{-1}(z)$.  Applying the Plancherel theorem to $f$ gives $\dim \pi^{K_n} \le d(\pi)^{-1} \mathrm{vol}(Z_K K_n)^{-1}$ for any Haar measure on $G$ and for any supercuspidal $\pi$, where $d(\pi)$ is the formal degree of $\pi$. By normalizing Haar measure, we can arrange that $d(\pi)$ is a positive integer, which gives $\dim \pi^{K_n} \ll p^{8n}$.  This is considered a trivial bound. On the other hand, for a fixed $\pi$ (either supercuspidal or any generic representation of $G$), the asymptotic growth of $\dim \pi^{K_n}$ is well known to be of order $q^{3n}$. (Such an asymptotic growth is known for general reductive $p$-adic groups either by character expansion or by a building argument \cite[Theorem 8.5]{MS12}.) So the bound $q^{4n}$ of Theorem \ref{invdim} is close to optimal (and more than enough for our global application). In fact, when applied to a fixed $\pi$, our method gives a bound of order $q^{3n}$. It is an interesting question whether a uniform bound of order $q^{3n}$ holds (or for a general reductive group, whether a uniform bound can be established to the same order as the bound for an individual representation).

\end{remark}

\subsection{An overview of Howe's construction}
\label{overview}

We now describe the construction of Howe in more detail, including the features we shall use to prove Theorem \ref{invdim}.  Howe's representations $V(\psi')$ are parametrized by a degree 3 extension $F'/F$ and a character $\psi'$ of $F'^\times$, satisfying a condition called admissibility.  Fix such an $F'$ and $\psi'$, and let $R'$, $\varpi'$, and $k'$ be the ring of integers, uniformizer, and residue field of $F'$. Write $N=N(F'/F)$ for the norm map from $F'$ to $F$. Choose a basis for $R'$ as a free $R$-module, which defines an embedding $F' \subset M_3(F)$.  We shall identify $F'$ with a subalgebra of $M_3(F)$ from now on.  We define the order $A' = \cap_{x \in F'^\times} x A x^{-1}$, which is characterized as the set of matrices $M$ such that $M \varpi'^i R^3 \subset \varpi'^i R^3$ for all $i$.  We define $K' = \cap_{x \in F'^\times} x K x^{-1} = A' \cap GL_3(R)$, which is the subgroup of matrices preserving the lattices $\varpi'^i R^3$.  For $i \ge 1$ we define $K'_i = 1 + \varpi'^i A'$ and $U'_i = 1 + \varpi'^i R'$.  Let $j$ be the conductor of $\psi'$, that is the minimal $j$ such that $\psi'$ is trivial on $U'_j$.  The admissibility condition placed on $\psi'$ implies that $j \ge 1$.

In \cite[Lemma 12]{Ho}, Howe constructs a representation $W(\psi')$ of $K' F^{\prime\times}$\footnote{Lemma 12 only defines $W(\psi')$ as a representation of $K'$, but it can be extended to $K' F'^\times$ by the remarks at the start of \cite[Thm 2]{Ho}.}, and defines the supercuspidal representation $V(\psi')$ associated to $F'$ and $\psi'$ to be the compact induction of $W(\psi')$ to $G$.  We know that $\dim \pi^{K_n}$ is at most $\dim W(\psi')$ times the number of double cosets of the form $K' F'^\times g K_n$ that support $K_n$-invariant vectors, that is such that $W(\psi')^{g K_n g^{-1} \cap K' F'^\times} \neq 0$.  Bounding $\dim W(\psi')$ is easy, while bounding the number of these double cosets requires a feature of $W(\psi')$ from Howe's paper that we now describe.

We first assume that $j \ge 2$.  The representation $W(\psi')$ is trivial on $K'_j$, and because $K'_{j-1} / K'_j$ is abelian, $W(\psi')|_{K'_{j-1}}$ decomposes into characters.  Howe defines a character $\psi$ of $K'_{j-1} / K'_j$ by taking the natural extension of $\psi'$ from $U'_{j-1}$, and shows that $W(\psi')|_{K'_{j-1}}$ contains exactly the characters lying in the $K'$-orbit of $\psi$ for the natural action of $K'$ on $\widehat{K'_{j-1} / K'_j}$.

We use this fact to control those $g$ supporting invariant vectors by observing that if $W(\psi')^{g K_n g^{-1} \cap K' F'^\times} \neq 0$, then $W(\psi')^{g K_n g^{-1} \cap K'_{j-1} } \neq 0$.  However, if $g \in K \lambda(\varpi) K$ with $\lambda \in X_*(T)$ too large, then $g K_n g^{-1} \cap K'_{j-1}$ will contain the intersection of $K'_{j-1}$ with a unipotent subgroup of $G$, and this will turn out to be incompatible with the description of $W(\psi')|_{K'_{j-1}}$.  In the case $j = 1$, $F'$ is unramified over $F$ and $W(\psi')$ is inflated from a cuspidal representation of $K' / K'_1 \simeq GL_3(k)$, and we may use this to argue in a similar way.

In the case of $GL_3$, Howe's construction may be naturally divided into the cases where $F'/F$ is ramified or unramified.  We shall therefore divide our proof into these two cases, after introducing some more notation and defining the character $\psi$.

\subsection{The character $\psi$}
\label{psidef}

We now assume that $j \ge 2$, and define the character $\psi$ of $K'_{j-1} / K'_j$ that appears in the description of $W(\psi')|_{K'_{j-1}}$.

Let $B'$ be the group of prime to $p$ roots of unity in $F'^\times$, which is naturally identified with $k'^\times$.  Let $C'$ be the group generated by $B'$ and $\varpi'$.  Let $\langle \; , \; \rangle$ be the pairing $\langle S, T \rangle = \tr(ST)$ on $M_3(F)$.  Let $\chi$ be a character of $F$ of conductor $R$, which defines an isomorphism $\theta : M_3(F) \to \widehat{M_3(F)}$ by $\theta(S)(T) = \chi( \langle S, T \rangle )$.  Let $e$ denote the degree of ramification of $F' / F$.  Because the dual lattice to $A'$ under $\langle , \rangle$ is $\varpi'^{1 - e} A'$ by \cite[Lemma 2]{Ho}, the map $\theta$ gives an isomorphism between the character group of $\varpi'^{i-1} A' / \varpi'^i A'$ and $\varpi'^{-i -e +1} A' / \varpi'^{-i - e +2} A'$.  We may combine $\theta$ with the isomorphism $K'_{j-1} / K'_j \simeq \varpi'^{j-1} A' / \varpi'^j A'$ to obtain $\mu : \widehat{K'_{j-1} / K'_j} \to \varpi'^{-j-e+1} A' / \varpi'^{-j-e+2} A'$.  If $\mu(\varphi) = y + \varpi'^{-j-e+2} A'$, we say that $y$ represents $\varphi$.  If $\varphi$ has a representative $y \in F'^\times$, we see that $\varphi$ also has a unique representative $c \in C'$, which is called the standard representative of $\varphi$.

The map $\theta$ restricts to a map $F' \to \widehat{F'}$, which is also given by $\theta(x)(y) = \chi( \tr_{F'/F} xy )$.  We may combine $\theta$ with the isomorphism $U'_{j-1} / U'_j \simeq \varpi'^{j-1} R' / \varpi'^j R'$ to obtain $\mu' : \widehat{U'_{j-1} / U'_j} \to \varpi^{-j-e+1} R' / \varpi^{-j-e+2} R'$.  If $\mu'(\varphi) = y + \varpi^{-j-e+2} R'$, we say that $y$ represents $\varphi$.  We see that a nontrivial $\varphi$ has a unique representative $c \in C'$, which is called the standard representative of $\varphi$.

We define $\psi$ by taking the standard representative $c$ for $\psi'$ on $U'_{j-1}$, and letting $\psi$ be the character represented by $c$.  If we let $\Ad^*$ denote the natural action of $K'$ on $\widehat{K'_{j-1} / K'_j}$, given explicitly by $[ \Ad^*(k)\psi](g) = \psi(k^{-1} g k)$, then \cite[Lemma 12]{Ho} states that $W(\psi')|_{K'_{j-1}}$ contains exactly the characters in $\Ad^*(K') \psi$.

\subsection{Reduction to the case $c \notin F$}
\label{reduction}

We may carry out the argument sketched in Section \ref{overview} once we have reduced to the case where either $j = 1$ or $c \notin F$.  We perform this reduction by observing that if $j \ge 2$ and $c \in F$, then $V(\psi')$ is a twist of $V(\psi_1)$ for some $\psi_1$ of smaller conductor.  Indeed, by \cite[Lemma 11]{Ho}, if $c \in F$ then we may write $\psi' = \psi_1 \psi_2$, where $\psi_1$ is trivial on $U'_{j-1}$ and $\psi_2 = \psi'' \circ N(F'/F)$ for some character $\psi''$ of $F^\times$.

\begin{lemma}

We have $V(\psi') = V(\psi_1) \otimes \psi'' \circ \det$.

\end{lemma}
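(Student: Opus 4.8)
The plan is to descend the identity to the level of the inducing data, i.e.\ to prove the isomorphism of representations of $K'F'^\times$
\be
\label{Wtwist}
W(\psi')\;\simeq\;W(\psi_1)\otimes\big(\psi''\circ\det\big)\big|_{K'F'^\times},
\ee
and then to deduce the lemma formally. Indeed, by definition $V(\psi')=\Ind_{K'F'^\times}^{G}W(\psi')$ and $V(\psi_1)=\Ind_{K'F'^\times}^{G}W(\psi_1)$ are the compact inductions of $W(\psi')$ and $W(\psi_1)$ from $K'F'^\times$ to $G$, and for any smooth character $\lambda$ of $G$ there is a canonical isomorphism $\Ind_{K'F'^\times}^{G}\!\big(\sigma\otimes\lambda|_{K'F'^\times}\big)\simeq\big(\Ind_{K'F'^\times}^{G}\sigma\big)\otimes\lambda$, given by $f\mapsto\big(g\mapsto\lambda(g)f(g)\big)$ and valid for every smooth $\sigma$. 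Taking $\sigma=W(\psi_1)$ and $\lambda=\psi''\circ\det$ and invoking \eqref{Wtwist} gives $V(\psi')\simeq V(\psi_1)\otimes(\psi''\circ\det)$, as claimed.

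To prove \eqref{Wtwist} I would inspect Howe's construction of $W(\psi')$ from $\psi'$ in \cite[Lemma 12]{Ho} and verify that it intertwines twisting the input character $\psi'$ of $F'^\times$ by one of the form $\psi''\circ N(F'/F)$ with twisting the output by $\psi''\circ\det$. Two compatibilities make this work. First, multiplication by $x\in F'^\times$ on $F'\simeq F^{3}$ has determinant $N(F'/F)(x)$, so $(\psi''\circ\det)|_{F'^\times}=\psi''\circ N(F'/F)=\psi_2$; hence the input character attached to $W(\psi')\otimes(\psi''\circ\det)^{-1}$ is $\psi'\psi_2^{-1}=\psi_1$. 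Second, $\det$ carries $K'$ into $R^{\times}$, and one checks that $(\psi''\circ\det)|_{K'}$ is trivial on $K'_{j}$ and restricts on $K'_{j-1}$ to the character $\psi$ of Section \ref{psidef}; this is consistent precisely because $c\in F$ is central, so $\Ad^{*}(K')\psi=\{\psi\}$ and $W(\psi')|_{K'_{j-1}}$ is a multiple of the single character $\psi$ by \cite[Lemma 12]{Ho}.

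With these in hand, the argument runs through Howe's recipe. Since $F'/F$ has prime degree $3$ there are no proper intermediate fields, and $W(\psi')$ is assembled directly from $\psi'$ by a chain of extensions and inductions whose data is determined by the restrictions of $\psi'$ to the subgroups $U'_{i}$, together with an oscillator-type step at the middle level when $j\ge2$ and a cuspidal representation of $K'/K'_{1}\simeq GL_3(k)$ when $j=1$ (so $F'/F$ unramified); the isomorphism class of the output depends on $\psi'$ only through these pieces of data. Replacing $\psi'$ by $\psi_1=\psi'\psi_2^{-1}$ twists the character along $F'^\times$ by $(\psi''\circ\det)^{-1}$ by the first observation, and twists the characters induced on the $K'_{i}$ compatibly by the second; since $\psi_1$ is trivial on $U'_{j-1}$, the construction for $\psi_1$ is already trivial on $K'_{j-1}$ rather than only on $K'_{j}$, which matches the effect of tensoring $W(\psi')$---a multiple of $\psi$ on $K'_{j-1}$---by $(\psi''\circ\det)^{-1}$. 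Carrying this bookkeeping through the two cases of Howe's construction ($F'/F$ ramified or unramified) gives \eqref{Wtwist}.

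The step I expect to be the main obstacle is this last bookkeeping: one has to line up Howe's normalizations for the two parameters $\psi'$ and $\psi_1$ level by level, track the conductor drop from $j$ to $j-1$ and how it interacts with the ramification index $e$ of $F'/F$ (the case $c\in F$ does occur with $F'/F$ ramified), and treat the oscillator step and the $GL_3(k)$ step separately. Everything outside this is formal.
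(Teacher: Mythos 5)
Your proposal is correct and takes essentially the same route as the paper: both reduce the lemma to the isomorphism $W(\psi')\simeq W(\psi_1)\otimes(\psi''\circ\det)$ of the inducing representations on $K'F'^\times$ and then use that compact induction commutes with twisting by a character of $G$. The only real difference is that the level-by-level bookkeeping you flag as the main obstacle is unnecessary: in the case $c\in F$ the construction of \cite[Lemma 12]{Ho} \emph{defines} $W(\psi')$ by first forming $W(\psi_1)\otimes\psi''\circ\det$ and then applying the correspondence of \cite[Thm 1]{Ho}, which is the identity here because the group $GL_l(F'')$ appearing there is all of $GL_3(F)$, so the twisting identity can simply be read off from Howe's construction.
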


\begin{proof}

This follows by examining the construction of $W(\psi')$ in \cite[Lemma 12]{Ho}.  In the case $c \in F$, the groups $H_i$ defined by Howe are equal to $K'_i$, and the group $GL_l(F'')$ appearing in the proof of \cite[Lemma 12]{Ho} is equal to $GL_3(F)$.  Howe constructs $W(\psi')$ by taking the representation $W(\psi_1)$ of $K'$ associated to $\psi_1$ (which he denotes $W''(\psi_1')$, and whose construction can be assumed as $\psi_1$ has smaller conductor) and forming the twist $W(\psi_1) \otimes \psi'' \circ \det$.  He then obtains $W(\psi')$ by applying the correspondence of \cite[Thm 1]{Ho} to $W(\psi_1) \otimes \psi'' \circ \det$, which in this case is trivial so that $W(\psi') = W(\psi_1) \otimes \psi'' \circ \det$.  As $V(\psi')$ and $V(\psi_1)$ are the inductions of $W(\psi')$ and $W(\psi_1)$, the lemma follows.

\end{proof}

The next lemma shows that it suffices to consider $V(\psi_1)$.

\begin{lemma}

We have $\dim V(\psi')^{K_n} \le \dim V(\psi_1)^{K_n}$.

\end{lemma}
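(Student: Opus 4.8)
The starting point is the preceding lemma, which identifies $V(\psi')\cong V(\psi_1)\otimes(\psi''\circ\det)$ as representations of $G=GL_3(F)$. Hence $\dim V(\psi')^{K_n}=\dim\Hom_{K_n}\!\big((\psi''\circ\det)^{-1}|_{K_n},\,V(\psi_1)\big)$, and we must show this is at most $\dim\Hom_{K_n}(\mathbf{1},V(\psi_1))=\dim V(\psi_1)^{K_n}$. Since $\det(K_n)=U_n$, the character $(\psi''\circ\det)^{-1}|_{K_n}$ is trivial exactly when $\psi''|_{U_n}=\mathbf{1}$, and in that case $V(\psi')^{K_n}=V(\psi_1)^{K_n}$ and we are done. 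So the plan is to treat the remaining case $\psi''|_{U_n}\neq\mathbf{1}$, where it suffices to prove $V(\psi')^{K_n}=0$.

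Assume $\psi''|_{U_n}\neq\mathbf{1}$ and suppose, for contradiction, that there is a nonzero $v\in V(\psi_1)$ with $V(\psi_1)(k)v=\psi''(\det k)^{-1}v$ for all $k\in K_n$. The first step is to use the centre: the scalar subgroup $Z_n=\{zI:z\in U_n\}$ lies in $K_n$ and is central in $G$, so it acts on $V(\psi_1)$ through the central character $\psi_1|_{F^\times}$ of $V(\psi_1)$. Taking $k=zI$ (so $\det k=z^3$) forces $\psi_1(z)=\psi''(z^3)^{-1}$ for all $z\in U_n$. Because $p\neq 3$, cubing is an automorphism of $U_n$, so the right-hand side is a nontrivial character of $U_n$; therefore $\psi_1|_{U_n}\neq\mathbf{1}$. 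Since $U_n=1+\varpi^nR\subseteq 1+\varpi'^{en}R'=U'_{en}$, where $e=e(F'/F)$, this gives $\psi_1|_{U'_{en}}\neq\mathbf{1}$, so the conductor $j_1$ of $\psi_1$ satisfies $j_1>en$, i.e. $n<j_1/e$. On the other hand, $\psi_1$ is trivial on $U'_{j-1}$ by \cite[Lemma 11]{Ho}, hence $j_1\le j-1$, and combining these, $n<(j-1)/e$.

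The second step is to contradict this using the depth of $V(\psi')$. Because $\psi'$ has conductor $j$ (so $\psi'$ is nontrivial on $U'_{j-1}$) and $F'^\times$ is an elliptic maximal torus of ramification index $e$, the Howe supercuspidal $V(\psi')$ has depth $(j-1)/e$; equivalently, $W(\psi')$ is nontrivial on $K'_{j-1}$, the depth-$(j-1)/e$ filtration subgroup. Since $K_n$ is the principal congruence subgroup of level $n$, the existence of a nonzero $K_n$-fixed vector in $V(\psi')$ forces $\mathrm{depth}(V(\psi'))\le n-1$, that is $(j-1)/e\le n-1$. This contradicts $n<(j-1)/e$ from the previous paragraph, so $V(\psi')^{K_n}=0$ in the case $\psi''|_{U_n}\neq\mathbf{1}$, and the lemma follows.

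The one step that is not routine bookkeeping, and which I expect to be the main obstacle to write carefully, is the depth input: the computation $\mathrm{depth}(V(\psi'))=(j-1)/e$ together with the standard comparison $\mathrm{depth}(\pi)\le n-1$ whenever $\pi^{K_n}\neq 0$. If one prefers to avoid Moy--Prasad language, the needed vanishing $V(\psi')^{K_n}=0$ for $n\le(j-1)/e$ can instead be extracted directly from the fact, recalled in Section~\ref{psidef}, that $W(\psi')|_{K'_{j-1}}$ contains only the nontrivial characters in the $K'$-orbit of $\psi$; but this requires some care with the building-theoretic geometry of $K'_{j-1}$ relative to the $K_n$, which is precisely what the depth formalism packages cleanly.
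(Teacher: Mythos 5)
Your argument is logically sound, and its first half is exactly the paper's: the identity $\psi_1(z)=\psi''(z^3)^{-1}$ for $z\in U_n$ obtained from the central character of $V(\psi_1)$ (equivalently, from the central character $\psi'|_{F^\times}$ of $V(\psi')$) is precisely the computation in the paper's proof, and your case split $\psi''|_{U_n}=\mathbf{1}$ versus $\psi''|_{U_n}\neq\mathbf{1}$ coincides with the paper's split $n\ge i+1$ versus $n\le i$, where $j=ei+1$ and $\psi''$ has conductor $i+1$. Where you diverge is in how you extract a contradiction in the second case. The paper stays elementary: since $i\ge n$ we have $U_i\subseteq U_n$, and $\psi_1$ is trivial on $U'_{j-1}\cap F^\times=U_i$ by Howe's Lemma 11, so restricting your identity to $U_i$ gives $\psi''(z^3)=1$ for all $z\in U_i$, hence ($p\neq 3$) $\psi''|_{U_i}=\mathbf{1}$, contradicting that $\psi''$ has conductor $i+1$. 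No further input is needed. You instead restrict only to $U_n$, where $\psi_1$ need not be trivial, deduce the weaker inequality $n<(j-1)/e$, and then appeal to the Moy--Prasad depth of $V(\psi')$ being exactly $(j-1)/e$ to close the loop.

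That depth statement is true, but as written it is the one step of your proof that is asserted rather than established: the inequality $\mathrm{depth}\le n-1$ from $\pi^{K_n}\neq0$ is routine, but the lower bound $\mathrm{depth}(V(\psi'))\ge (j-1)/e$ is a genuine theorem about tame supercuspidals (a building-wide statement, not just a statement at the point fixed by $K'F'^{\times}$), and nothing of the sort is developed or cited in this paper --- indeed the quantitative substitute for it is what the rest of Section 9 labors to prove. You correctly flag this as the weak point. The fix is cheap: your Step 1 already contains the paper's argument, and restricting the identity to $U_i$ rather than $U_n$ eliminates the depth input entirely. So the proposal is correct in outline but, in its present form, trades the paper's two-line elementary finish for an external theorem it does not prove.
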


\begin{proof}

Because $N(U'_i) = U_{\lceil i/e \rceil}$, if a character $\varphi$ of $F^\times$ has conductor $i+1$, then $\varphi \circ N(F'/F)$ has conductor $ei+1$.  Because $\psi'' \circ N$ has conductor $j \ge 2$, this implies that there is some $i \ge 1$ such that $j = ei + 1$ and $\psi''$ has conductor $i+1$.

If $n \ge i+1$ then $\det K_n \subset U_{i+1}$.  This implies that $\psi'' \circ \det$ is trivial on $K_n$, which gives the lemma.  Suppose that $n \le i$ and $V(\psi')^{K_n} \neq 0$.  As the central character of $V(\psi')$ is $\psi'|_{F^\times}$, this implies that $\psi'$ is trivial on $U_n$, and hence on $U_i$.  As $\psi_1$ is trivial on $U'_{j-1} \cap F = U_i$, this implies that $\psi_2 = \psi'' \circ N(F'/F)$ is trivial on $U_i$.  This implies that $\psi''$ is trivial on $U_i$, which contradicts it having conductor $i+1$.

\end{proof}

By replacing $\psi'$ with $\psi_1$, multiple times if necessary, we may assume for the rest of the proof that $j = 1$ or $c \notin F$.

\subsection{The unramified case}

Here, the groups $K'$ and $K_j'$ are equal to $K$ and $K_j$ respectively, and so we omit the $'$ in this section.  We also take $\varpi' = \varpi$.  The embedding $F' \subset M_3(F)$ has the property that $R' = F' \cap M_3(R)$, so that it induces an embedding $k' \subset M_3(k)$.  It also satisfies $R'^\times = F'^\times \cap K$ and $U_i' = F'^\times \cap K_i$.

We need to bound $\dim W(\psi')$, and the number of double cosets $K F'^\times g K_n$ such that $W(\psi')^{g K_n g^{-1} \cap K F'^\times} \neq 0$, and we begin with the second problem.  We note that $F'^\times \subset K Z$ in the unramified case, where $Z$ is the center of $G$, so that $K F'^\times = K Z$.  The dimension of $W(\psi')^{g K_n g^{-1} \cap K Z}$ depends only on the double coset $K Z g K$.  By the Cartan decomposition, we may therefore break the problem into finding those $\lambda \in X_*(T)^+$ such that $ZK \lambda(\varpi) K$ supports invariant vectors, where $T$ is the diagonal torus in $G$, and then count the $(ZK, K_n)$-double cosets in a given $ZK \lambda(\varpi) K$.  These steps are carried out by Lemmas \ref{lambdabound} and \ref{doublecoset}.  We write $\lambda \in X_*(T)$ as $(\lambda_1, \lambda_2, \lambda_3)$.

\begin{lemma}
\label{lambdabound}

If $\lambda \in X_*(T)^+$ is such that $W(\psi')^{\lambda(\varpi) K_n \lambda(\varpi)^{-1} \cap K Z} \neq 0$, then $\max \{ \lambda_1 - \lambda_2, \lambda_2 - \lambda_3 \} \le n-j$.
(It follows that $j\le n$.)

\end{lemma}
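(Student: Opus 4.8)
The plan is to use the explicit structure of $W(\psi')$ as a representation of $K'=K$ recalled in Sections~\ref{overview} and~\ref{psidef}, together with the elementary fact that conjugating $K_n$ by a dominant $\lambda(\varpi)$ enlarges the congruence subgroups in the directions of negative roots. First I would observe that every element of $\lambda(\varpi)K_n\lambda(\varpi)^{-1}$ has determinant in $1+\varpi^nR\subset R^\times$, so such an element lies in $KZ$ only if it already lies in $K$; hence $H_\lambda:=\lambda(\varpi)K_n\lambda(\varpi)^{-1}\cap KZ=\lambda(\varpi)K_n\lambda(\varpi)^{-1}\cap K$, and the centre is irrelevant here. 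A direct matrix computation, using that $\lambda$ is dominant, shows that for each pair $i>i'$ the group $H_\lambda$ contains $\{1+xE_{i,i'}:v(x)\ge\max(n-(\lambda_{i'}-\lambda_i),0)\}$; in particular it contains $1+\varpi^{\max(n-(\lambda_1-\lambda_2),0)}R\,E_{2,1}$, $1+\varpi^{\max(n-(\lambda_2-\lambda_3),0)}R\,E_{3,2}$ and $1+\varpi^{\max(n-(\lambda_1-\lambda_3),0)}R\,E_{3,1}$. I will argue by contradiction: suppose $\lambda_1-\lambda_2\ge n-j+1$ (the case $\lambda_2-\lambda_3\ge n-j+1$ is symmetric, replacing $E_{2,1},E_{3,1}$ by $E_{3,2},E_{3,1}$ throughout).

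Consider first the case $j\ge 2$, where by the reduction of Section~\ref{reduction} we may assume $c\notin F$. Since $\lambda_1-\lambda_3\ge\lambda_1-\lambda_2\ge n-j+1$, both exponents $\max(n-(\lambda_1-\lambda_2),0)$ and $\max(n-(\lambda_1-\lambda_3),0)$ are $\le j-1$, so $H_\lambda\cap K_{j-1}$ contains the abelian group $N:=\{1+xE_{2,1}+yE_{3,1}:x,y\in\varpi^{j-1}R\}$. If $W(\psi')^{H_\lambda}\ne 0$ then $W(\psi')^{N}\ne 0$; since $W(\psi')$ is trivial on $K_j$ and $K_{j-1}/K_j$ is abelian, $W(\psi')|_{K_{j-1}}$ is a sum of characters, and by \cite[Lemma 12]{Ho} (as recalled in Section~\ref{psidef}) these all lie in $\Ad^*(K)\psi$; hence there is $k\in K$ with $\Ad^*(k)\psi$ trivial on $N$, i.e.\ $\psi$ trivial on $k^{-1}Nk$. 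Using the description of $\psi$ as the character $1+T\mapsto\chi(\tr(cT))$ of $K_{j-1}/K_j\simeq\varpi^{j-1}M_3(R)/\varpi^jM_3(R)$, where $c\in\varpi^{-j}M_3(R)$ is the standard representative, triviality of $\psi$ on the image of $k^{-1}Nk$ in $M_3(k)$ says that $\bar k\bar c\bar k^{-1}\in M_3(k)$ is orthogonal, under $(S,T)\mapsto\tr(ST)$, to the span of $E_{2,1}$ and $E_{3,1}$; equivalently $(\bar k\bar c\bar k^{-1})_{1,2}=(\bar k\bar c\bar k^{-1})_{1,3}=0$, so $\bar k\bar c\bar k^{-1}$ has first row $(\ast,0,0)$ and therefore an eigenvalue $\ast\in k$. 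But $\varpi^jc$ is a root of unity, and $c\notin F$ with $[F':F]=3$ forces $\varpi^jc$ to generate the residue extension $k'/k$, so the characteristic polynomial of $\bar c$ is an irreducible cubic over $k$ --- a contradiction.

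Finally, in the case $j=1$ (so $F'/F$ is unramified and $W(\psi')$ is inflated from a cuspidal representation $\rho$ of $K/K_1\simeq GL_3(k)$), the assumption $\lambda_1-\lambda_2\ge n$ gives $\max(n-(\lambda_1-\lambda_2),0)=\max(n-(\lambda_1-\lambda_3),0)=0$, so the image of $H_\lambda$ in $GL_3(k)$ contains the full unipotent radical $\{1+aE_{2,1}+bE_{3,1}:a,b\in k\}$ of a proper parabolic subgroup. Since $W(\psi')^{H_\lambda}$ equals the space of vectors in $\rho$ fixed by this image, and a cuspidal representation of $GL_3(k)$ has no nonzero vector fixed by the unipotent radical of a proper parabolic, we again obtain $W(\psi')^{H_\lambda}=0$, contradicting the hypothesis. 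This proves $\max(\lambda_1-\lambda_2,\lambda_2-\lambda_3)\le n-j$; the parenthetical assertion $j\le n$ then follows because $\lambda$ is dominant, so the left-hand side is non-negative. The main obstacle is the bookkeeping in the middle paragraph: one must pin down the conductor normalisations carefully enough to know that triviality of $\psi$ on the elementary unipotent subgroup in direction $E_{i,i'}$ is exactly the vanishing of the $(i',i)$-entry of a $GL_3(k)$-conjugate of $\bar c$; granting that dictionary, the contradiction with the irreducibility of the characteristic polynomial of $\bar c$ (and, for $j=1$, with cuspidality) is immediate.
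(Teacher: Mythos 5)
Your proof is correct and follows essentially the same route as the paper's: a contradiction argument that places the abelian subgroup spanned by the $E_{2,1}$ and $E_{3,1}$ directions inside $\lambda(\varpi)K_n\lambda(\varpi)^{-1}\cap K_{j-1}$, invokes the description of $W(\psi')|_{K_{j-1}}$ as the $\Ad^*(K)$-orbit of $\psi$, and derives a contradiction from the trace-pairing annihilator forcing an eigenvalue of (a conjugate of) $\bar c$ into $k$, with the $j=1$ case handled by cuspidality over $GL_3(k)$. The only addition is your explicit determinant argument reducing $KZ$ to $K$, which the paper leaves implicit.
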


\begin{proof}

We will prove $\lambda_1-\lambda_2\le n-j$ by contradiction; the argument for $\lambda_2 - \lambda_3$ is exactly analogous.

First we treat the case $j>1$.
The hypothesis implies that $W(\psi')^{\lambda(\varpi) K_n \lambda(\varpi)^{-1} \cap K_{j-1} } \neq 0$, and we use the description of $W(\psi')$ restricted to $K_{j-1}$.  We identify $K_{j-1} / K_j \simeq M_3(k)$.  If $c = b \varpi^{-j}$ with $b \in B'$, and we identify $b$ with an element of $k'^\times \subset M_3(k)$, then $\psi|_{K_{j-1}}$ under this identification corresponds to the character of $M_3(k)$ given by  $x \to \chi( \tr(bx) / \varpi )$.

If $\lambda_1 - \lambda_2 \ge n-j+1$, a simple calculation shows that the image of $\lambda(\varpi) K_n \lambda(\varpi)^{-1} \cap K_{j-1}$ in $K_{j-1} / K_j \simeq M_3(k)$ contains the subgroup

\bes
Y = \left( \begin{array}{ccc}
&& \\
* && \\
* &&
\end{array} \right) \subset M_3(k).
\ees
There must be a character in the orbit $\Ad^* K(\psi)$ which is trivial on $Y$, which means that there is $k \in K$ such that $\tr( y \Ad(k)b ) = 0$ for all $y \in Y$.  The annihilator of $Y$ under the trace pairing is

\bes
Y^\perp = \left( \begin{array}{ccc}
* && \\
* & * & * \\
* & * & *
\end{array} \right) \subset M_3(k),
\ees
so that $\Ad(k) b \in Y^\perp$.  Any $y \in Y^\perp$ has eigenvalues that lie in the quadratic extension of $k$, while the eigenvalues of $b$ lie in $k' - k$ because $c \notin F$, which is a contradiction.

Next we consider the case $j=1$.  In this case, the proof of \cite[Lemma 12]{Ho} states that $W(\psi')$ is inflated from a cuspidal representation of $GL_3(k)$.  If $\lambda_1 - \lambda_2 \ge n$, then the image of $\lambda(\varpi) K_n \lambda(\varpi)^{-1} \cap K$ in $K / K_1 \simeq GL_3(k)$ contains the subgroup

\bes
Y = \left( \begin{array}{ccc}
1 && \\
* & 1 & \\
* && 1
\end{array} \right) \subset GL_3(k).
\ees
However, a cuspidal representation cannot have any vectors invariant under $Y$, because then it would be a subrepresentation of a representation induced from a parabolic of type (2,1).

\end{proof}

\begin{lemma}
\label{doublecoset}

Let $\lambda \in X_*(T)^+$ satisfy $\max \{ \lambda_1 - \lambda_2, \lambda_2 - \lambda_3 \} \le n-j$ as in Lemma \ref{lambdabound}.  The number of $(ZK, K_n)$-double cosets in $ZK \lambda(\varpi) K$ is at most $q^{4n-4j}(1 + 1/q)^3$.

\end{lemma}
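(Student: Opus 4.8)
The plan is to discard both the principal congruence subgroup $K_n$ and most of the centre, reducing the count to the size of a single Cartan double coset for $GL_3$, to which Lemma~\ref{lambdabound} applies directly. Explicitly, I will prove
$$|ZK\backslash ZK\lambda(\varpi)K/K_n|\ \le\ |K\lambda(\varpi)K/K|\ \le\ q^{2(\lambda_1-\lambda_3)}(1+1/q)^3,$$
and then use Lemma~\ref{lambdabound} to bound $2(\lambda_1-\lambda_3)=2(\lambda_1-\lambda_2)+2(\lambda_2-\lambda_3)\le 4(n-j)=4n-4j$.

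For the first inequality: since $ZK\lambda(\varpi)K$ is stable under right translation by $K_n\subset K$, the quotient set $ZK\backslash ZK\lambda(\varpi)K$ carries a right $K_n$-action whose orbit set is exactly the set of $(ZK,K_n)$-double cosets, so $|ZK\backslash ZK\lambda(\varpi)K/K_n|\le|ZK\backslash ZK\lambda(\varpi)K|$. Now $ZK\lambda(\varpi)K$ is a single $(ZK,K)$-double coset, hence its left $ZK$-cosets are in bijection with $H\backslash K$, where $H=\{k\in K:\lambda(\varpi)k\lambda(\varpi)^{-1}\in ZK\}=K\cap\lambda(\varpi)^{-1}(ZK)\lambda(\varpi)$ is the open, finite-index stabiliser of the coset $ZK\lambda(\varpi)$. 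Since $H\supseteq J:=K\cap\lambda(\varpi)^{-1}K\lambda(\varpi)$, we get $[K:H]\le[K:J]$, and $[K:J]=|K\lambda(\varpi)^{-1}K/K|=|K\lambda(\varpi)K/K|$ because inversion carries the right cosets of $K\lambda(\varpi)^{-1}K$ bijectively to the left cosets of $K\lambda(\varpi)K$ and, $K$ being compact, a double coset has equally many left and right cosets. This proves the first inequality; the centre only enlarges $H$ and so only helps.

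For the second inequality I will use the explicit Cartan decomposition for $GL_3$. Writing $a=\lambda_1-\lambda_2\ge0$, $b=\lambda_2-\lambda_3\ge0$, the group $J=K\cap\lambda(\varpi)^{-1}K\lambda(\varpi)$ consists of the $\kappa\in GL_3(R)$ with $\kappa_{12}\in\varpi^aR$, $\kappa_{13}\in\varpi^{a+b}R$, $\kappa_{23}\in\varpi^bR$; this contains $K_{a+b}$, so $[K:J]$ reduces to an elementary count of invertible matrices in $GL_3(R/\varpi^{a+b})$, giving $[K:J]=q^{2(a+b)}(1+\tfrac1q)(1+\tfrac1q+\tfrac1{q^2})$ when $a,b\ge1$, and a value $q^{2(a+b)}$ times a strictly smaller factor (one of $1+\tfrac1q+\tfrac1{q^2}$ or $1$) when $a=0$ or $b=0$; in all cases this is at most $q^{2(a+b)}(1+1/q)^3$. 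Finally Lemma~\ref{lambdabound} gives $a\le n-j$ and $b\le n-j$, so $|ZK\backslash ZK\lambda(\varpi)K/K_n|\le q^{2(a+b)}(1+1/q)^3\le q^{4n-4j}(1+1/q)^3$. There is no genuine obstacle: the two reductions are formal, and the only point needing care is the sharp evaluation of $|K\lambda(\varpi)K/K|$, together with checking that each degenerate value of $\lambda$ only improves the constant — the real work having been done already in Lemma~\ref{lambdabound}.
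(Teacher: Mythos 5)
Your argument is correct and essentially the paper's: both reduce the count to the index of $K_\lambda = K\cap\lambda(\varpi)^{-1}K\lambda(\varpi)$ in $K$ (the paper phrases this as $|K_\lambda\backslash K/K_n|$ and bounds it via the image of $K_\lambda$ in $K/K_n$, which amounts to the same index since $K_n\subseteq K_\lambda$ under the hypothesis), and then count matrices over residue rings — you compute the Cartan cell size $q^{2(\lambda_1-\lambda_3)}(1+1/q)(1+1/q+1/q^2)$ exactly before invoking Lemma \ref{lambdabound}, whereas the paper weakens the congruence conditions to level $n-j$ first, arriving at the same bound. One cosmetic slip: for dominant $\lambda$ the subgroup $K\cap\lambda(\varpi)^{-1}K\lambda(\varpi)$ has its congruence conditions on the entries \emph{below} the diagonal (your description is of $K\cap\lambda(\varpi)K\lambda(\varpi)^{-1}$), but the two subgroups have the same index in $K$, so nothing in the count changes.
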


\begin{proof}

Any double coset $ZK g K_n$ contained in $ZK \lambda(\varpi) K$ has a representative with $g \in \lambda(\varpi) K$.  It may be seen that $\lambda(\varpi) k_1$ and $\lambda(\varpi) k_2$ represent the same double coset if and only if $k_1 \in \lambda(\varpi)^{-1} K \lambda(\varpi) k_2 K_n$, and so if we define $K_\lambda = \lambda(\varpi)^{-1} K \lambda(\varpi) \cap K$ then the number of double cosets is equal to $| K_\lambda \backslash K / K_n |$.

If $j=n$ then $\lambda_1=\lambda_2=\lambda_3$ so $K_\lambda=K$, hence the lemma is trivial. So we may assume $j\le n-1$.
Then $K_\lambda$ contains any matrix $g=(g_{a,b}) \in M_3(R)$ such that $v(g_{2,1})\ge n-j$, $v(g_{3,2})\ge n-j$, $v(g_{3,1})\ge 2n-2j$, and $v(g_{i,i})=0$ for $1\le i\le 3$ (the last condition ensures that $g\in K$ as $g$ mod $\varpi$ is upper triangular).
This implies that the image of $K_\lambda$ in $K/K_n\simeq GL_3(R/\varpi^n)$ has cardinality at least $q^{5n+4j} (1 - 1/q)^3$. Therefore

\begin{align*}
| K_\lambda \backslash K / K_n | & \le | K / K_n | / | \mathrm{image}(K_\lambda) | \\
& \le q^{9n}(1 - q^{-3})(1 - q^{-2})(1 - q^{-1}) / q^{5n+4j}(1 - 1/q)^3 \\
& = q^{4n-4j}(1 + q^{-1} + q^{-2})(1 + q^{-1}) \le q^{4n-4j}(1 + 1/q)^3.
\end{align*}

\end{proof}

Lemma \ref{lambdabound} implies that there are at most $n^2$ choices of $\lambda \in X^+_*(T) / X_*(Z)$ such that $KZ \lambda(\varpi) K$ supports invariant vectors, and combining this with Lemma \ref{doublecoset} shows that there are at most $n^2 q^{4n-4j} (1+1/q)^3$ double cosets $KZ g K_n$ that support invariant vectors.  This gives
$$\dim V(\psi')^{K_n} \le n^2 q^{4n-4j} (1+1/q)^3 \dim W(\psi').$$

We now bound $\dim W(\psi')$.  We first assume that $j \ge 2$.  Our assumption that $c \notin F$ implies that the field $F''$ in \cite[Lemma 12]{Ho} is the same as $F'$, and the groups $H_i$ are given by $H_0 = R'^\times$ and $H_i = U'_i$ for $i \ge 1$.  Following the proof of that lemma, we see that $W(\psi')$ is the representation associated to the character $\psi'$ on $R'^\times$ by \cite[Thm 1]{Ho}.  When $j$ is even, that theorem implies that $W(\psi')$ is the induction of a character of $R'^\times K_{j/2}$ to $K$, so that $\dim W(\psi') = | K : R'^\times K_{j/2} |$.  We have $| K : R'^\times K_{j/2} | = q^{3j}(1 - 1/q)(1 - 1/q^2) \le q^{3j}$.

When $j$ is odd, we let $j = 2i+1$.  The construction of $W(\psi')$ in this case is described on \cite[p. 448]{Ho}, and is given by inducing a representation $J$ from $R'^\times K_i$ to $K$.  The discussion on p. 448 implies that $J$ has the same dimension as the two representations denoted $V(\widetilde{\varphi}'')$ and $V(\psi)$ there, and Howe states that $\dim V(\psi) = ( \# \widetilde{\mathcal{H}} / \widetilde{\mathcal{Z}} )^{1/2}$ for two groups $\widetilde{\mathcal{H}}$ and $\widetilde{\mathcal{Z}}$.  Moreover, on p. 447 he states that $\widetilde{\mathcal{H}} / \widetilde{\mathcal{Z}} \simeq \mathcal{H} / \mathcal{Z} \simeq K_i / U_i' K_{i+1}$.  As $i \ge 1$, we have $\dim J = | K_i / U_i' K_{i+1} |^{1/2} = q^3$.  We then have $\dim W(\psi') = q^3 | K : R'^\times K_i | = q^3 q^{6i} (1 - 1/q)(1 - 1/q^2) \le q^{6i + 3} = q^{3j}$.

In the remaining case $j = 1$, $W(\psi')$ is inflated from a cuspidal representation of $GL_3(k)$. Such a cuspidal representation has dimension $(q^2-1)(q-1)$. Therefore $\dim W(\psi')=(q^2-1)(q-1)\le q^3 = q^{3j}$.

In all cases we have verified $\dim W(\psi') \le q^{3j}$. Hence
$$\dim V(\psi')^{K_n} \le n^2 q^{4n-4j} (1+1/q)^3 \cdot q^{3j} \le n^2 q^{4n} (1+1/q)^3 .$$

\subsection{The ramified case}

In this case we must have $j \ge 2$.  Moreover, $F'$ is tamely ramified over $F$ (since $p\neq 3$) and generated by a cube root of a uniformizer $\varpi$ of $F$. Thus we may assume $\varpi^{\prime 3}=\varpi$. Choose our basis for $R'$ as a free $R$-module to be $\{1, \varpi', \varpi'^2 \}$.  With this choice, the image of $\varpi'$ in $G$ is

\bes
\varpi' = \left( \begin{array}{ccc} && \varpi \\ 1 && \\ & 1 & \end{array} \right).
\ees
We see that $\varpi'^i A'$ is given by
\begin{align}
\label{Aprime1}
\varpi'^{3i}A' & = \varpi^i \left( \begin{array}{ccc} * & \varpi * & \varpi * \\ * & * & \varpi * \\ * & * & * \end{array} \right), \\
\label{Aprime2}
\varpi'^{3i+1} A' & = \varpi^i \left( \begin{array}{ccc} \varpi * & \varpi * & \varpi * \\ * & \varpi * & \varpi * \\ * & * & \varpi * \end{array} \right), \\
\label{Aprime3}
\varpi'^{3i+2} A' & = \varpi^i \left( \begin{array}{ccc} \varpi * & \varpi * & \varpi^2 * \\ \varpi * & \varpi * & \varpi * \\ * & \varpi * & \varpi * \end{array} \right),
\end{align}
where the *'s lie in $R$.  As $K' = A'^\times$, $K'$ is the lower triangular Iwahori subgroup.

The proof may be naturally broken into cases depending on the residue class of $j$ modulo 3.  We may assume that $j \not\equiv 1 \; (3)$, as in this case we have $c \in F$.  Note that we are using our assumption that $\varpi'^3 = \varpi$ here.

As in the unramified case, we begin by observing that it suffices to bound $\dim W(\psi')$ and the number of double cosets $K' F'^\times g K_n$ such that $W(\psi')^{g K_n g^{-1} \cap K' F^\times} \neq 0$.  This condition depends only on $K' F'^\times g K$, and the following lemma gives a convenient set of representatives for these double cosets.

\begin{lemma}

If $\Sigma = \{ \lambda \in X_*(T) : \lambda_1 + \lambda_2 + \lambda_3 = 0 \}$, we have $G = \bigcup_{\lambda \in \Sigma} K' F'^\times \lambda(\varpi) K$.

\end{lemma}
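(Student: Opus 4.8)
The plan is to parametrize the double cosets $K'F'^{\times}\backslash G/K$ (or rather to cover $G$ by them) using $\Sigma$, in two steps: first record the Iwahori double coset decomposition of $G$ relative to $K$, and then observe that left translation by the uniformizer $\varpi'\in F'^{\times}$ shifts the parametrizing coweight in a way that adjusts $\lambda_1+\lambda_2+\lambda_3$ by $1$, which lets us absorb that one direction into $F'^{\times}$.

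For the first step: it is standard (the affine Bruhat decomposition, i.e.\ the Iwahori--Matsumoto presentation, together with the ordinary Bruhat decomposition of $GL_3(k)\simeq K/K_1$) that
\[
G=\bigsqcup_{w\in\widetilde W}K'\dot w K',\qquad K=\bigsqcup_{u\in W}K'\dot u K',
\]
where $\widetilde W=X_*(T)\rtimes W$ is the extended affine Weyl group, $W=S_3$, and $\dot w$ denotes any representative of $w$. Dividing, $K'\backslash G/K\cong\widetilde W/W\cong X_*(T)$, a set of representatives being $\{\lambda(\varpi):\lambda\in X_*(T)\}$. In particular $G=\bigcup_{\lambda\in X_*(T)}K'\lambda(\varpi)K$, so it remains to cut $X_*(T)$ down to $\Sigma$ at the cost of enlarging $K'$ to $K'F'^{\times}$.

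For the second step, recall $K'=\bigcap_{x\in F'^{\times}}xKx^{-1}$ is normalized by $F'^{\times}$, so for $\lambda\in X_*(T)$ and $g\in K'\lambda(\varpi)K$ we have $\varpi' g\in\varpi' K'\lambda(\varpi)K=K'\varpi'\lambda(\varpi)K$. Using $\varpi'=\left(\begin{smallmatrix}0&0&\varpi\\1&0&0\\0&1&0\end{smallmatrix}\right)$ one computes directly that
\[
\varpi'\,\lambda(\varpi)=\mu(\varpi)\begin{pmatrix}0&0&1\\1&0&0\\0&1&0\end{pmatrix},\qquad \mu=(\lambda_3+1,\lambda_1,\lambda_2),
\]
and since the permutation matrix on the right lies in $K$, we get $K'\varpi'\lambda(\varpi)K=K'\mu(\varpi)K$. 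Thus left translation by $\varpi'$ permutes the double cosets $K'\lambda(\varpi)K$ by $\lambda\mapsto(\lambda_3+1,\lambda_1,\lambda_2)$, which increases $\lambda_1+\lambda_2+\lambda_3$ by exactly $1$ (consistently with $\varpi'^3=\varpi I$ acting by $\lambda\mapsto\lambda+(1,1,1)$). Hence every $\langle\varpi'\rangle$-orbit on $X_*(T)$ meets $\Sigma$. Given $g\in G$, pick $\lambda$ with $g\in K'\lambda(\varpi)K$ and $h\in\langle\varpi'\rangle\subset F'^{\times}$ carrying this double coset to $K'\mu(\varpi)K$ with $\mu\in\Sigma$; then $g\in h^{-1}K'\mu(\varpi)K=K'h^{-1}\mu(\varpi)K\subset K'F'^{\times}\mu(\varpi)K$, as claimed.

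The only point needing any care is the parametrization $K'\backslash G/K\cong X_*(T)$ in the first step — a classical but not purely formal input (one could instead derive it from the Iwasawa decomposition $G=\bar B(F)K$ after conjugating the lower unipotent part across $\lambda(\varpi)$, using $\bar N(R),T(R)\subset K'$). Everything after that is an elementary matrix identity, and no estimate enters here; the finiteness relevant to the application will be extracted afterward from the conductor of $\psi'$, exactly as in the unramified case.
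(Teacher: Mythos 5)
Your proposal is correct and follows essentially the same route as the paper: the affine Bruhat decomposition $G=\bigcup_{w\in\widetilde W}K'wK'$ for the Iwahori subgroup $K'$, combined with the observation that left multiplication by $\varpi'$ acts on $\widetilde W/W\simeq X_*(T)$ by $\lambda\mapsto(\lambda_3+1,\lambda_1,\lambda_2)$, so that every $\langle\varpi'\rangle$-orbit meets $\Sigma$. The only (cosmetic) difference is that you absorb $W$ into $K$ before applying the $\varpi'$-shift, whereas the paper writes the factorization $\widetilde W=\langle\varpi'\rangle\,\Sigma\,W$ directly.
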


\begin{proof}

We use the Bruhat decomposition.  Let $T^1$ and $N(T)$ be the maximal compact subgroup and normalizer of $T$.  We define the Weyl group $W = N(T) / T$ and affine Weyl group $\widetilde{W} = N(T) / T^1$.  We identify $W$ with the group of permutation matrices in $K$, and hence with a subgroup of $\widetilde{W}$.  We then have $\widetilde{W} \simeq X_*(T) \rtimes W$, and $\widetilde{W}$ may be identified with matrices of the form $\lambda(\varpi) w$ with $\lambda \in X_*(T)$ and $w \in W$.

We have $\varpi' \in N(T)$, and it may be seen that $\widetilde{W} = \langle \varpi' \rangle \Sigma W$.  Indeed, the action of $\varpi'$ on $\widetilde{W} / W \simeq X_*(T)$ by left multiplication is given by

\[
\varpi'(\lambda_1, \lambda_2, \lambda_3) = (\lambda_3 +1, \lambda_1, \lambda_2),
\]
so every orbit contains a unique element of $\Sigma$.  The Bruhat decomposition then gives

\[
G = \bigcup_{w \in \widetilde{W} } K' w K' = \bigcup_{\lambda \in \Sigma} K' \langle \varpi' \rangle \lambda(\varpi) W K' = \bigcup_{\lambda \in \Sigma} K' F'^\times \lambda(\varpi) K
\]
as required.

\end{proof}

The next lemma bounds those $\lambda \in \Sigma$ such that $K' F'^\times \lambda(\varpi) K$ supports invariant vectors.

\begin{lemma}
\label{ramlambdabound}

If $\lambda \in \Sigma$ satisfies $W(\psi')^{\lambda(\varpi) K_n \lambda(\varpi)^{-1} \cap K' F'^\times} \neq 0$, then

\begin{align}
\label{ramlambda1}
\max\{ \lambda_1 - \lambda_2, \lambda_2 - \lambda_3, \lambda_3 - \lambda_1 +1 \} & \le n-i-1 \quad \text{if} \quad  j = 3i + 2, \\
\label{ramlambda2}
\max\{ \lambda_2 - \lambda_1, \lambda_3 - \lambda_2, \lambda_1 - \lambda_3 - 1 \} & \le n-i-1 \quad \text{if} \quad  j = 3i.
\end{align}
In either case, summing the three bounds gives $j \le 3n-2$.

\end{lemma}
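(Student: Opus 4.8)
The plan is to run, in the ramified setting, the same strategy as in Lemma \ref{lambdabound}. If one of the displayed bounds failed, then $\lambda(\varpi)K_n\lambda(\varpi)^{-1}\cap K'_{j-1}$ would contain the preimage of a ``coordinate line'' $Y$ in $K'_{j-1}/K'_j$, and I will show this is incompatible with the description of $W(\psi')|_{K'_{j-1}}$ as the sum of the characters in $\Ad^*(K')\psi$. The three off--diagonal positions entering the problem are permuted cyclically by left multiplication by $\varpi'$, so it suffices to establish one of the three inequalities in each of the two cases $j=3i+2$ and $j=3i$; I will do this for $\lambda_2-\lambda_1\le n-i-1$ when $j=3i$, the remaining inequalities following by the same argument after a cyclic relabelling of the matrix positions and of the indices $\lambda_1,\lambda_2,\lambda_3$.

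First I would record the local data explicitly. Since $j\ge 2$, the quotient $K'_{j-1}/K'_j$ is abelian and $W(\psi')$ is trivial on $K'_j$, so $W(\psi')|_{K'_{j-1}}$ decomposes into characters, all of which lie in $\Ad^*(K')\psi$ by \cite[Lemma 12]{Ho}; moreover $W(\psi')^{\,g\cap K'F'^\times}\neq 0$ forces $W(\psi')^{\,g\cap K'_{j-1}}\neq 0$ because $K'_{j-1}\subset K'\subset K'F'^\times$. Using the isomorphism $K'_{j-1}/K'_j\simeq \varpi'^{j-1}A'/\varpi'^j A'$ and its trace-dual $\widehat{K'_{j-1}/K'_j}\simeq \varpi'^{-j-e+1}A'/\varpi'^{-j-e+2}A'$ with $e=3$, together with the entrywise descriptions (\ref{Aprime1})--(\ref{Aprime3}), one reads off that both groups are $3$-dimensional over $k$ and supported in the three ``rotational superdiagonal'' matrix positions determined by the residue of $j$ modulo $3$. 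The admissibility reduction of Section \ref{reduction} has left us with $c\notin F$, and a direct conductor computation for $\psi'$ shows that the standard representative of $\psi$ is $c=\zeta\varpi'^{-j-2}$ with $\zeta\in k^\times$; hence, viewed as a matrix, $\overline c$ has nonzero entries precisely in those same three positions (the positions of the nonzero entries of $\varpi'^{-j-2}$), each entry a unit multiple of $\zeta$.

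The argument then has two parts. First, if $\lambda_2-\lambda_1\ge n-i$, then conjugating the elementary subgroup $\{1+aE_{12}:a\in\varpi^n R\}$ by $\lambda(\varpi)$ yields $\{1+a\varpi^{\lambda_1-\lambda_2}E_{12}\}$, which by (\ref{Aprime1})--(\ref{Aprime3}) lies in $K'_{j-1}$ and surjects modulo $K'_j$ onto the full coordinate line $Y$ in the $(1,2)$ position of $\varpi'^{j-1}A'/\varpi'^j A'$ (this is the ``simple calculation''). Consequently some character in $\Ad^*(K')\psi$ is trivial on $Y$, i.e.\ some $\Ad(k)\overline c$ with $k\in K'$ lies in the annihilator $Y^\perp$ under the trace pairing, which means that the component of $\Ad(k)\overline c$ in one of the three distinguished positions vanishes. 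Second, $\Ad(K')$ acts on the graded piece $\varpi'^{-j-2}A'/\varpi'^{-j-1}A'$ through $K'/K'_1\simeq(k^\times)^3$ (because $\varpi'^m A'$ is a two-sided $A'$-ideal and $A'$ is the period-$3$ chain order, whose quotient by its radical $\varpi'A'$ is $k\times k\times k$), and this action merely rescales each of the three one-dimensional isotypic lines; since $\overline c$ is nonzero in all three, so is every $\Ad(k)\overline c$, a contradiction. This proves the bound; the case $j=3i+2$ is identical after the cyclic relabelling. Finally, summing the three strict inequalities in each case and using that the cyclic sum $(\lambda_1-\lambda_2)+(\lambda_2-\lambda_3)+(\lambda_3-\lambda_1\pm1)=\pm1$ telescopes gives $j\le 3n-2$.

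The main obstacle I anticipate is bookkeeping rather than conceptual: one must keep four indexings consistent across the residue classes of $j$ modulo $3$ --- the positions where $\varpi'^m A'$ carries its leading entries, the free positions of $K'_{j-1}/K'_j$ and of its dual, the support of $\overline c$, and the pairing of the exponents $\lambda_a-\lambda_b$ with the right hand side $n-i-1$ --- and verify in each class that the conjugated elementary subgroup really fills out the claimed line $Y$ inside $K'_{j-1}$. Beyond this there is nothing new relative to Lemma \ref{lambdabound}: the obstruction there (the residue field is too small to contain the eigenvalues of $b$) is replaced here by the torus-equivariance obstruction above, which is available precisely because $F'/F$ is totally ramified.
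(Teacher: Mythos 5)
Your proposal is correct and follows essentially the same route as the paper's proof: both reduce to $K'_{j-1}$-invariance, use Howe's description of $W(\psi')|_{K'_{j-1}}$ as the orbit $\Ad^*(K')\psi$, observe that $K'$ acts on $K'_{j-1}/K'_j\simeq k^3$ through the torus $K'/K'_1\simeq(k^\times)^3$ so that every character in the orbit is nontrivial on each coordinate line (since all three components of $c=b\varpi'^{-j-2}$ are units), and then read off the inequalities from the entrywise description of $\varpi'^mA'$ in \eqref{Aprime1}--\eqref{Aprime3}. The only cosmetic differences are that you phrase the obstruction on the dual side ($\Ad(k)\overline{c}\notin Y^\perp$) and organize the three entrywise checks by cyclic relabelling rather than doing them directly.
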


\begin{proof}

We may naturally identify $K' / K'_1$ and $K'_{j-1} / K'_j$ with $(k^\times)^3$ and $k^3$ using the coordinate entries in such a way that the adjoint action of $K' / K'_1$ on $K'_{j-1} / K'_j$ is given by

\[
\Ad(x_1, x_2, x_3)(y_1, y_2, y_3) = \bigg\{ \begin{array}{ll} (x_1 x_2^{-1} y_1, x_2 x_3^{-1} y_2, x_3 x_1^{-1} y_3), & j \equiv 0 \; (3), \\
(x_1^{-1} x_2 y_1, x_2^{-1} x_3 y_2, x_3^{-1} x_1 y_3), & j \equiv 2 \; (3). \end{array}
\]
Moreover, if $c$ is equal to $\varpi'^{-j-2} b$ with $b \in B' \simeq k^\times$, then the character $\psi$ of $K'_{j-1}/K'_j$ is given by $\psi(y_1, y_2, y_3) = \chi( b(y_1 + y_2 + y_3) / \varpi)$.  This implies that $\Ad^*(h) \psi$ is nontrivial on every coordinate subgroup in $K'_{j-1} / K'_j \simeq k^3$ for any $h \in K'$.

If $W(\psi')^{\lambda(\varpi) K_n \lambda(\varpi)^{-1} \cap K' F'^\times} \neq 0$, then $W(\psi')^{\lambda(\varpi) K_n \lambda(\varpi)^{-1} \cap K'_{j-1}} \neq 0$.  Because $W(\psi')|_{K'_{j-1}}$ is a sum of characters of the form $\Ad^*(h) \psi$ with $h \in K'$, one such character must be trivial on $\lambda(\varpi) K_n \lambda(\varpi)^{-1} \cap K'_{j-1}$, which implies that the image of $\lambda(\varpi) K_n \lambda(\varpi)^{-1} \cap K'_{j-1}$ in $K'_{j-1} / K'_j$ does not contain a coordinate subgroup.  By combining the definition $K'_{j-1} = 1 + \varpi'^{j-1}A'$ with (\ref{Aprime1})--(\ref{Aprime3}), we see that this implies the inequalities (\ref{ramlambda1}) and (\ref{ramlambda2}).

\end{proof}

\begin{lemma}
\label{ramdoublecoset}

Let $\lambda \in \Sigma$ satisfy (\ref{ramlambda1}) or (\ref{ramlambda2}).  The number of $(K' F'^\times, K_n)$-double cosets in $K' F'^\times \lambda(\varpi) K$ is at most $q^{3n-2i} (1 + 1/q)^3$ when $j = 3i$, and $q^{3n-2i-2} (1 + 1/q)^3$ when $j = 3i+2$.

\end{lemma}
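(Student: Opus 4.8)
The plan is to copy the structure of the proof of Lemma~\ref{doublecoset}. Since $K_n$ is normal in $K$, every $(K'F'^\times,K_n)$-double coset inside $K'F'^\times\lambda(\varpi)K$ has a representative of the form $\lambda(\varpi)k$ with $k\in K$, and $\lambda(\varpi)k_1$, $\lambda(\varpi)k_2$ represent the same double coset exactly when $k_1\in K_\lambda k_2K_n$, where $K_\lambda=K\cap\lambda(\varpi)^{-1}K'F'^\times\lambda(\varpi)$; hence the number of double cosets equals $|K_\lambda\backslash K/K_n|=[K:K_\lambda K_n]$. I would first observe that $K_\lambda$ is no bigger than in the split-torus case: an element of $K'F'^\times$ has $v(\det)$ equal to its exponent of $\varpi'$ (using $\det\varpi'=\varpi$ and $\varpi'^3=\varpi$), and conjugation by $\lambda(\varpi)\in T$ preserves the determinant, so the requirement $\det\in R^\times$ forces $K_\lambda=K\cap\lambda(\varpi)^{-1}K'\lambda(\varpi)$.

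Since $K'$ is the lower-triangular Iwahori subgroup, conjugating its defining conditions by $\lambda(\varpi)$ identifies $K_\lambda$ with $\{g\in M_3(R):\det g\in R^\times,\ v(g_{ab})\ge m_{ab}\ \text{for all }a,b\}$, where $m_{aa}=0$, $m_{ab}=\max(0,\lambda_b-\lambda_a)$ for $a>b$, and $m_{ab}=\max(0,1+\lambda_b-\lambda_a)$ for $a<b$. A straightforward count of the image of $K_\lambda$ in $K/K_n\simeq GL_3(R/\varpi^n)$ — reduction mod $\varpi$ lands in a Borel-type subgroup of $GL_3(k)$ of order $q^6(1-1/q)^3$, and the part of the image in the kernel of that reduction has order $q^{9n-6-\sum_{a\neq b}\min(m_{ab},n)}$ — gives $|K_\lambda K_n/K_n|=q^{9n-\sum_{a\neq b}\min(m_{ab},n)}(1-1/q)^3$. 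Dividing into $|K/K_n|=q^{9n}(1-q^{-1})(1-q^{-2})(1-q^{-3})$ yields
\[
|K_\lambda\backslash K/K_n|\ \le\ q^{\,\sum_{a\neq b}\min(m_{ab},n)}(1+1/q)^3 .
\]
The truncation $\min(m_{ab},n)$ here is essential; the cruder exponent $\sum_{a\neq b}m_{ab}$ is genuinely too large.

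It then remains to bound the exponent using Lemma~\ref{ramlambdabound}. Writing $x=\lambda_1-\lambda_2$ and $y=\lambda_2-\lambda_3$, and pairing the $(a,b)$ and $(b,a)$ contributions, one checks $\min(m_{ab},n)+\min(m_{ba},n)=\min(f(\lambda_a-\lambda_b),n)$ with $f(t)=\max(t,1-t)$, so that $\sum_{a\neq b}\min(m_{ab},n)=\min(f(x),n)+\min(f(y),n)+\min(f(x+y),n)$. The inequalities of Lemma~\ref{ramlambdabound} confine $(x,y)$ to an explicit polygon — roughly $\{x\ge i+1-n,\ y\ge i+1-n,\ x+y\le n-i\}$ when $j=3i$, and its reflection through the origin when $j=3i+2$ — and one maximizes the piecewise-linear function above over this polygon by splitting into the sign cases of $x$, $y$, $x+y$. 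The key mechanism is that whenever one of the three summands saturates at $n$, the constraint bounding $x+y$ forces the other two to be at most $n-i$ (resp.\ $n-i-1$), so the total never exceeds $3n-2i$ (resp.\ $3n-2i-2$); dropping the integrality condition $\lambda\in\Sigma$ only enlarges the polygon and so is harmless for the upper bound. The main obstacle is precisely this optimization — keeping track of which of the six entries $m_{ab}$ is large in which regime and using the cap at $n$ in a coordinated way across the three pairs — together with pinning down the exact form of the $m_{ab}$ and of which residue class of $j$ modulo $3$ produces which three inequalities, via the matrix realization of $\varpi'$ and the descriptions (\ref{Aprime1})--(\ref{Aprime3}) of $\varpi'^iA'$.
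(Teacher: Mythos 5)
Your argument is correct and is essentially the paper's proof in a more uniform packaging: the paper likewise reduces the count to $|K'_\lambda\backslash K/K_n|$ with $K'_\lambda=\lambda(\varpi)^{-1}K'\lambda(\varpi)\cap K$ (your determinant observation upgrades the paper's one-sided containment to an equality, but only the upper bound is needed), lower-bounds the image of $K'_\lambda$ in $K/K_n$ using exactly your exponents $m_{ab}$ with the one entry that can exceed $n$ truncated to $n$, and then performs the same optimization organized as six cases indexed by the Borel $B_\lambda$, which is just your sign pattern of $x$, $y$, $x+y$. One caution: for $j=3i+2$ the region is not literally the reflection through the origin of the $j=3i$ region --- \eqref{ramlambda1} gives $x+y\ge i+2-n$, not $x+y\ge i-n$, and with the weaker reflected constraint the quantity $\min(f(x),n)+\min(f(y),n)+\min(f(x+y),n)$ can reach about $2n-2i+3$, which exceeds $3n-2i-2$ for small $n$ --- so the optimization must be run with the exact inequalities of Lemma~\ref{ramlambdabound}, with which it does go through in all sign cases.
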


\begin{proof}

Any double coset $K' F'^\times g K_n$ contained in $K' F'^\times \lambda(\varpi) K$ has a representative of the form $\lambda(\varpi) k$, and two elements $\lambda(\varpi) k_1$ and $\lambda(\varpi) k_2$ represent the same double coset if and only if $k_2 \in \lambda(\varpi)^{-1} K' F'^\times \lambda(\varpi) k_1 K_n$.  Therefore if we define $K'_\lambda = \lambda(\varpi)^{-1} K' \lambda(\varpi) \cap K$, the number of double cosets is bounded by $| K'_\lambda\backslash K / K_n |$.  It may be seen that $K_\lambda'$ contains any matrix $g=(g_{a,b}) \in M_3(R)$ satisfying the conditions

\begin{align*}
v(g_{a,b}) & \ge \max\{ \lambda_b - \lambda_a + 1, 0 \}, \quad a < b \\
v(g_{i,i}) & =0, \quad 1 \le i \le 3 \\
v(g_{a,b}) & \ge \max\{ \lambda_b - \lambda_a, 0 \}, \quad a > b
\end{align*}
on the upper triangular, diagonal, and lower triangular entries respectively.  The reader should note that for each pair $a \neq b$, one may order $(a,b)$ so that the inequalities on $v(g_{a,b})$ and $v(g_{b,a})$ have the form $v(g_{a,b}) \ge c$, $v(g_{b,a}) \ge 0$ for some $c > 0$.  Moreover, the set of entries for which the inequality above reads $v(g_{a,b}) \ge 0$ form the unipotent radical of a Borel subgroup $B_\lambda$ containing the diagonal matrices.  It follows that $K'_\lambda$ lies between $B_\lambda \cap GL_3(R)$ and $(B_\lambda \cap GL_3(R)) K_1$.

We will divide the proof into six cases depending on the possibilities for $B_\lambda$.  We treat one case in detail, and describe the modifications to be made in the others.
\medskip

\noindent
{\bf Case 1:} $\displaystyle B_\lambda = \left( \begin{array}{ccc} * & * & * \\ & * & * \\ && * \end{array} \right)$
\medskip

\noindent
In this case, the significant congruence conditions imposed on $g = (g_{a,b}) \in K_\lambda'$ are

\be
\label{entryconds1}
v(g_{2,1}) \ge \lambda_1 - \lambda_2, \quad v(g_{3,1}) \ge \lambda_1 - \lambda_3, \quad \text{and} \quad v(g_{3,2}) \ge \lambda_2 - \lambda_3.
\ee
The image of $K_\lambda'$ in $K / K_n$ therefore has cardinality at least $q^{9n + 2\lambda_3 - 2\lambda_1}(1 - 1/q)^3$, and so as in Lemma \ref{doublecoset} we have $| K_\lambda' \backslash K / K_n | \le | K / K_n | / | \mathrm{image}(K_\lambda') | \le (1+1/q)^3 q^{2\lambda_1 - 2\lambda_3}$.  If $j = 3i$ then Lemma \ref{ramlambdabound} gives $2\lambda_1 - 2\lambda_3 \le 2n - 2i$ as required.  If $j = 3i+2$, Lemma \ref{ramlambdabound} does not provide a strong enough bound for $\lambda_1 - \lambda_3$, and so we instead observe that the image of $K_\lambda'$ in $K/K_n$ contains those matrices satisfying

\be
\label{entryconds2}
v(g_{2,1}) \ge \lambda_1 - \lambda_2, \quad v(g_{3,1}) \ge n, \quad \text{and} \quad v(g_{3,2}) \ge \lambda_2 - \lambda_3,
\ee
with the other conditions unchanged.  This group has cardinality at least $q^{8n + \lambda_3 - \lambda_1} (1-1/q)^3$, and so $| K_\lambda' \backslash K / K_n | \le (1+1/q)^3 q^{n + \lambda_1 - \lambda_3}$.  Lemma \ref{ramlambdabound} gives

\[
\lambda_1 - \lambda_3 = (\lambda_1 - \lambda_2) + (\lambda_2 - \lambda_3) \le 2n - 2i -2,
\]
which gives the Lemma in this case.

In the other five cases, we may apply the same method to produce a bound of the form $| K_\lambda' \backslash K / K_n | \le (1+1/q)^3 q^\tau$, where $\tau$ depends on the residue class of $j$ modulo 3.  We describe the underlying recipe for finding $\tau$ in the case above, and then show what it gives in each remaining case.  When $j \equiv 0 \; (3)$, we added the right hand sides of (\ref{entryconds1}), and the resulting expression $2(\lambda_1 - \lambda_3)$ could be bounded using one application of Lemma \ref{ramlambdabound}, which gave $\tau$.  When $j \equiv 2 \; (3)$, we modified the bound in (\ref{entryconds1}) corresponding to the non-simple positive root for $B_\lambda$ to obtain (\ref{entryconds2}), added the right hand sides, and bounded the result using two applications of Lemma \ref{ramlambdabound} to give $\tau$.

We now find $\tau$ in the remaining 5 cases, and check that
\[
\tau \le \bigg\{ \begin{array}{ll} 3n-2i-2, & j = 3i+2, \\ 3n-2i, & j = 3i. \end{array}
\]
Note that in some cases we may need to use the assumption that $n \ge 1$, which we are free to make.
\medskip

\noindent
{\bf Case 2:} $\displaystyle B_\lambda = \left( \begin{array}{ccc} * & * & * \\ & * & \\ &*& * \end{array} \right)$
\medskip

\noindent
The analog of (\ref{entryconds1}) is

\bes
v(g_{2,1}) \ge \lambda_1 - \lambda_2, \quad v(g_{3,1}) \ge \lambda_1 - \lambda_3, \quad v(g_{2,3}) \ge \lambda_3 - \lambda_2 + 1,
\ees
which is modified to $v(g_{2,1}) \ge n$.  We have

\begin{align*}
2\lambda_1 - 2\lambda_2 +1 \le 2n-2i-1 & = \tau \quad \text{when} \quad j = 3i+2, \\
n+\lambda_1-\lambda_2+1 \le 3n-2i & = \tau \quad \text{when} \quad j = 3i.
\end{align*}

\noindent
{\bf Case 3:} $\displaystyle B_\lambda = \left( \begin{array}{ccc} * & & * \\ *& * &* \\ && * \end{array} \right)$
\medskip

\noindent
The analog of (\ref{entryconds1}) is

\bes
v(g_{1,2}) \ge \lambda_2 - \lambda_1+1, \quad v(g_{3,1}) \ge \lambda_1 - \lambda_3, \quad v(g_{3,2}) \ge \lambda_2 - \lambda_3,
\ees
which is modified to $v(g_{3,2}) \ge n$.  We have

\begin{align*}
2\lambda_2 - 2\lambda_3+1 \le 2n-2i-1 & = \tau \quad \text{when} \quad j = 3i+2, \\
n+\lambda_2 - \lambda_3+1 \le 3n-2i & = \tau \quad \text{when} \quad j = 3i.
\end{align*}

\noindent
{\bf Case 4:} $\noindent B_\lambda = \left( \begin{array}{ccc} * & * & \\ & * & \\ *&*& * \end{array} \right)$
\medskip

\noindent
The analog of (\ref{entryconds1}) is

\bes
v(g_{2,1}) \ge \lambda_1 - \lambda_2, \quad v(g_{1,3}) \ge \lambda_3 - \lambda_1+1, \quad v(g_{2,3}) \ge \lambda_3 - \lambda_2+1,
\ees
which is modified to $v(g_{2,3}) \ge n$. We have

\begin{align*}
2\lambda_3 - 2\lambda_2+2 \le 2n-2i & = \tau \quad \text{when} \quad j = 3i, \\
n+\lambda_3 - \lambda_2 +1 \le 3n-2i-2 & = \tau \quad \text{when} \quad j = 3i+2.
\end{align*}

\noindent
{\bf Case 5:} $\displaystyle B_\lambda = \left( \begin{array}{ccc} * & & \\ *& * & * \\ *&& * \end{array} \right)$
\medskip

\noindent
The analog of (\ref{entryconds1}) is

\bes
v(g_{1,2}) \ge \lambda_2 - \lambda_1+1, \quad v(g_{1,3}) \ge \lambda_3 - \lambda_1+1, \quad v(g_{3,2}) \ge \lambda_2 - \lambda_3,
\ees
which is modified to $v(g_{1,2}) \ge n$.  We have

\begin{align*}
2\lambda_2 - 2\lambda_1 + 2 \le 2n - 2i & = \tau \quad \text{when} \quad j = 3i, \\
n + \lambda_2 - \lambda_1 + 1 \le 3n-2i-2 & = \tau \quad \text{when} \quad j = 3i+2.
\end{align*}

\noindent
{\bf Case 6:} $\displaystyle B_\lambda = \left( \begin{array}{ccc}  * & & \\ * & * & \\ *&*& * \end{array} \right)$
\medskip

\noindent
The analog of (\ref{entryconds1}) is

\bes
v(g_{1,2}) \ge \lambda_2 - \lambda_1+1, \quad v(g_{1,3}) \ge \lambda_3 - \lambda_1+1, \quad v(g_{2,3}) \ge \lambda_3 - \lambda_2+1,
\ees
which is modified to $v(g_{1,3}) \ge n$.  We have

\begin{align*}
2\lambda_3 - 2\lambda_1 +3 \le 2n - 2i -1 & = \tau \quad \text{when} \quad j = 3i+2, \\
n + \lambda_3 - \lambda_1 + 2 \le 3n-2i & = \tau \quad \text{when} \quad j = 3i.
\end{align*}

\end{proof}

There are at most $9n^2$ choices of $\lambda \in \Sigma$ satisfying the bounds of Lemma \ref{ramlambdabound}.  Indeed, if $j = 3i+2$ then the Lemma gives $n-1 \ge \lambda_1 - \lambda_2, \lambda_2 - \lambda_3 \ge -2n+3$, and these two values determine $\lambda \in \Sigma$ uniquely.  If $j = 3i$, we have $i \ge 1$ so the Lemma likewise gives $2n-3 \ge \lambda_1 - \lambda_2, \lambda_2 - \lambda_3 \ge -n+2$.  Moreover, the bound of Lemma \ref{ramdoublecoset} may be written as $q^{3n-j+i}(1+1/q)^3$ in either case $j = 3i$ or $j = 3i+2$.  We therefore have at most $9n^2 q^{3n-j+i} (1+1/q)^3$ double cosets $K'F'^\times g K_n$ that support invariant vectors, and
\[
\dim V(\psi')^{K_n} \le 9n^2 q^{3n-j+i} (1 + 1/q)^3 \dim W(\psi').
\]
If $j$ is even, $W(\psi')$ is again obtained by inducing a character from $R'^\times K'_{j/2}$ to $K'$, and we have $\dim W(\psi') = | K' : R'^\times K'_{j/2} | = (1 - 1/q)^2 q^j$.

If $j$ is odd, set $j = 2l+1$.  As before, Howe defines $W(\psi')$ to be the induction from $R'^\times K_l'$ to $K'$ of a representation of dimension $| K_l' : U_l' K_{l+1}' |^{1/2} = q$.  This gives $\dim W(\psi') \le q | K' : R'^\times K_l' | = (1 - 1/q)^2 q^{2l+1} = (1 - 1/q)^2 q^j$.

In either case, the bound $\dim W(\psi') \le q^j$ gives
\[
\dim V(\psi')^{K_n} \le 9n^2 q^{3n-j+i} (1 + 1/q)^3 \cdot q^j = 9n^2 q^{3n+i} (1 + 1/q)^3.
\]
If $j = 3i$ then the bound $j \le 3n-2$ from Lemma \ref{ramlambdabound} gives $i \le n-1$, while if $j = 3i+2$ then $j \le 3n-2$ gives $i \le n-2$.  In either case, this completes the proof of Theorem \ref{invdim}.

\bibliographystyle{plain}
\bibliography{U}

\end{document}